\title{Structural and Non-isomorphism results for $q$-Araki-Woods factors}
\author{Changying Ding}
\address{Department of Mathematics, University of California, Los Angeles, Los Angeles, CA 90095, USA}
\email{cding@math.ucla.edu}
\author{Hui Tan}
\address{Department of Mathematics, University of California, Los Angeles, Los Angeles, CA 90095, USA}
\email{tanhui@math.ucla.edu}
\newtheorem{thm}{Theorem}[section]
\newtheorem{prop}[thm]{Proposition}
\newtheorem{cor}[thm]{Corollary}
\newtheorem{lem}[thm]{Lemma}
\theoremstyle{definition}
\newtheorem{defn/lem}[thm]{Definition/Lemma}
\newtheorem{rem}[thm]{Remark}
\newtheorem{note}[thm]{Notation}
\newcommand{\B}{{\mathbb B}}
\newcommand{\C}{{\mathbb C}}
\newcommand{\F}{{\mathbb F}}
\newcommand{\K}{{\mathbb K}}
\newcommand{\M}{{\mathbb M}}
\newcommand{\N}{{\mathbb N}}
\newcommand{\R}{{\mathbb R}}
\newcommand{\bS}{{\mathbb S}}
\newcommand{\X}{{\mathbb X}}
\newcommand{\Y}{{\mathbb Y}}
\newcommand{\cF}{{\mathcal F}}
\newcommand{\cH}{{\mathcal H}}
\newcommand{\cI}{{\mathcal I}}
\newcommand{\cK}{{\mathcal K}}
\newcommand{\cL}{{\mathcal L}}
\newcommand{\cM}{{\mathcal M}}
\newcommand{\cN}{{\mathcal N}}
\newcommand{\cO}{{\mathcal O}}
\newcommand{\cU}{{\mathcal U}}
\newcommand{\cZ}{{\mathcal Z}}
\newcommand{\sH}{\mathsf{H}}
\newcommand{\sK}{\mathsf{K}}
\newcommand{\sL}{\mathsf{L}}
\newcommand{\sF}{\mathsf{F}}
\newcommand{\Ad}{\operatorname{Ad}}
\newcommand{\Aut}{\operatorname{Aut}}
\newcommand{\id}{\operatorname{id}}
\newcommand{\Tr}{\operatorname{Tr}}
\newcommand{\ot}{\otimes}
\newcommand{\ovt}{\, \bar{\otimes}\,}
\newcommand{\op}{{\rm op}}
\newcommand{\alg}{{\operatorname{alg}}}
\newcommand{\ds}{{\sharp\kern-.5pt\sharp}}
\newcommand{\actson}{{\, \curvearrowright \,}}
\DeclareRobustCommand\frownotimes{\mathbin{\mathpalette\frown@otimes\relax}}
\newcommand{\frown@otimes}[2]{%
  \vbox{
    \ialign{##\cr
      \hidewidth$\m@th#1{}_\frown$\kern-\scriptspace\hidewidth\cr
      \noalign{\nointerlineskip\kern-1pt}
      $\m@th#1\otimes$\cr
    }%
  }%
}
\begin{document}

\begin{abstract}
It is proved that the $q$-Araki-Woods factor $\Gamma_q(\sH_\R, U)''$ associated with a strongly continuous orthogonal representation 
	$U:\R\to \cO(\sH_\R)$ is strongly solid for all $q\in (-1,1)$
	if the representation $U$ is almost periodic.
We also show that the $q$-Araki-Woods factor $\Gamma_q(\sH_\R, U)''$ 
	is not isomorphic to any free Araki-Woods factor for any $q\in (-1,1)\setminus\{0\}$
	if the representation $U$ has nontrivial weakly mixing part or infinite dimensional almost periodic part with bounded spectrum.
\end{abstract}
\maketitle

\section{Introduction}

Hiai's construction \cite{Hia03} associates a von Neumann algebra $\Gamma_q(\sH_\R, U)''$ called the $q$-Araki-Woods algebra
	to every strongly continuous orthogonal representation of $U:\R\to \cO(\sH_\R)$ on a real Hilbert space $\sH_\R$
	and a parameter $q\in (-1,1)$.
When $q=0$, this is Shlyakhtenko's construction of free Araki-Woods factors \cite{Sh97},
	which is the non-tracial analog of Voiculescu's free Gaussian functor \cite{VDN92}.
On the other hand, if the representation $U$ is trivial, $\Gamma_q(\sH_\R, U)''$
	is the von Neumann algebra of $q$-Gaussian variables of Bo\.zejko and Speicher \cite{BS91},
	which can be seen as a deformed free group factor.
	
In the study of the structure of these algebras, a fundamental result of Ozawa and Popa \cite{OzPo10a}
	asserts that the free group factor $L\F_n$ is strongly solid, i.e.,
	the von Neumann subalgebra generated by the normalizer $\cN_{L\F_n}(A)=\{u\in \cU(L\F_n)\mid uA u^*=A\}$ of
	any diffuse amenable von Neumann subalgebra $A\subset L\F_n$
	remains amenable, which strengthens both Voiculescu's celebrated result on absence of Cartan subalgebra in $L\F_n$ \cite{Voi96} 
	and Ozawa's result on solidity of $L\F_n$ \cite{Oza04}. 
The powerful strategy was later adapted to obtain strong solidity of $q$-Gaussian algebras \cite{Av11}
	(see also \cite{CaIsWa21, DP23})
	and free Araki-Woods factors \cite{BHV18}.
However, the structure of $q$-Araki-Woods algebras is less understood.
For instance, the question regarding factoriality of $\Gamma_q(\sH_\R, U)''$ was only recently resolved in full generality in \cite{KSW23}
	via a conjugate variable approach \cite{MiSp23, Nel17}. 

On the isomorphism side of these algebras, a surprising result of Guionnet and Shlyakhtenko \cite{GuSh14}
	shows that for a finite dimensional Hilbert space $\sH_\R$ 
	and a small range (depending on $\dim(\sH_\R)$) of $q$ around $0$,
	all $q$-Gaussian algebras $\Gamma_q(\sH_\R)''$ are isomorphic to free group factors.
This result was later generalized to the non-tracial setting \cite{Nel15}.
The situation when $\dim(\sH_\R)=\infty$ is quite the opposite, as a result of Caspers shows that $\Gamma_q(\sH_\R)''$ is never isomorphic 
	to any free group factors \cite{Cas23}.

This article continues these two lines of research for $q$-Araki-Woods factors. 
On the structural side, we show in Theorem~\ref{thm: strong solid} that the $q$-Araki-Woods factor is strongly solid
	for any $q\in(-1,1)$ and almost periodic representation $U:\R\to \cO(\sH_\R)$.
A key ingredient is a dichotomy for subalgebras in the continuous core of $\Gamma_q(\sH_\R, U)''$ (Theorem~\ref{thm: dichotomy}),
	which roughly says any von Neumann subalgebra in a finite corner of the continuous core is either amenable
	or is ``properly proximal relative to $L\R$'' in the sense of \cite{DKEP22},
	and can be seen as a continuous core version of the dichotomy for subalgebras in $q$-Gaussian algebras \cite[Theorem 8.1]{DP23}.
Combining this dichotomy with bimodule computations in Section~\ref{sec: bimod} built upon \cite{Av11, Wil20},
	complete metric approximation property of $q$-Araki-Woods \cite{ABW18}
	and the general weak compactness argument from \cite{BHV18}
	yields the strong solidity for almost periodic $q$-Araki-Woods factors.
	
Another consequence of our dichotomy result is a strengthening of fullness of $q$-Araki-Woods factors in the almost periodic case.
It was shown in \cite{KSW23} via \cite{Nel17} that a $q$-Araki-Woods algebra $\Gamma_q(\sH_\R, U)''$ is a full factor
	if $2\leq \dim(\sH_\R)<\infty$
	while the case $U: \R\to \cO(\sH_\R)$ is infinite dimensional and almost periodic was treated separately in \cite{KW24}
	(the weakly mixing case was obtained in \cite{HoIs20}).
With Theorem~\ref{thm: dichotomy}, it turns out that any nonamenable subfactor of almost periodic $q$-Araki-Woods factors
	with expectation is full (Corollary~\ref{cor: fullness}).
	
Coming back to the isomorphism side, we establish a $q$-Araki-Woods analog of Casper's non-isomorphism result \cite{Cas23}
	in Section~\ref{sec: iso} via the notion of biexact von Neumann algebras \cite{DP23}.
Indeed, it is known that all free Araki-Woods algebras are biexact \cite{HoIs17, DP23}
	while a necessary condition for a von Neumann algebra $M$ to be biexact is that the
	$M$-$M$ bimodule $L^2(M^\cU\ominus M)$ is weakly coarse for any non-principle ultrafilter $\cU\in \beta\N\setminus\N$
	\cite[Section 7]{DP23}.
Exploiting an idea from \cite{BCKW22} and using norm estimates from \cite{Nou04, Hia03},
	we prove in Theorem~\ref{thm: non biexact} that
	for any $U: \R\to \cO(\sH_\R)$ that has nontrivial weakly mixing part or infinite dimensional almost periodic part with bounded spectrum
	and $q\in (-1,1)\setminus\{0\}$,
	the $q$-Araki-Woods factor $\Gamma_q(\sH_\R, U)''$ fails the aforementioned necessary condition and hence not isomorphic to any free Araki-Woods factors.
The same approach also allows us to partially resolve \cite[Conjecture 2.11]{KSW23},
	which in particular implies that almost periodic $q$-Araki-Woods factors cannot be classified by Connes' Sd invariant,
	a clear contrast to the free case \cite{Sh97}.

The difference between Shlyakhtenko's functor and Hiai's functor can also be seen in the case of finite dimensional representations.
Indeed, if we denote by $U_\lambda: \R\to\cO(\R^2)$ the rotation with period $2\pi/\log(\lambda)$,
	then a consequence of \cite{Sh97} is that for $q=0$
		the natural inclusions of $\Gamma_q(\oplus_{i=1}^m(\R^2, U_\lambda))''\subset \Gamma_q(\oplus_{i=1}^k (\R^2, U_\lambda))''$
		and $\Gamma_q(\oplus_{i=1}^n(\R^2, U_\lambda))'' \subset \Gamma_q(\oplus_{i=1}^k (\R^2, U_\lambda))''$
		are isomorphic for any natural numbers $m,n< k$.
However, for $q\neq 0$, a consequence of Theorem~\ref{thm: non-iso inclusions} shows that
	one may find $m,n,k\in \N$ (depending on $q$)
	such that these inclusions are not isomorphic,
	and Theorem~\ref{thm: non-iso inclusions} is even new for $q$-Gaussian algebras.
	
\textbf{Acknowledgement.} It is our pleasure to thank Dima Shlyakhtenko for
	his lectures during the Spring 2025 quarter which motivated this project, stimulating discussions and encouragements. 
    We would also like to thank Cyril Houdayer, Brent Nelson and Zhiyuan Yang for their helpful comments.
	The authors are supported by AMS-Simons travel grants.

\section{Preliminaries and notations}\label{sec: prelim}

We first recall some general von Neumann algebra theory and set some notations. See \cite{Take03} for detailed treatment.

Let $M$ be a von Neumann algebra with a faithful normal semifinite weight $\varphi$.
Denote by $L^2(M,\varphi)$ the corresponding Hilbert space and $S_\varphi$ the Tomita operator
	with its polar decomposition $S_\varphi=J_\varphi \Delta_\varphi^{1/2}$.
One has a natural $M$-bimodule structure on $L^2(M,\varphi)$ given by $x\cdot \xi \cdot y=x J_\varphi y^* J_\varphi \xi$ for $x,y\in M$
	and $\xi\in L^2(M,\varphi)$.
If $\psi$ is another faithful normal semifinite weight on $M$,
	then by the uniqueness of the standard form there exists a unitary
	$U_{\varphi,\psi}: L^2(M,\psi)\to L^2(M,\varphi)$
	that intertwines the left and right actions of $M$
	and thus we may identify $L^2(M,\psi)$ with $L^2(M,\varphi)$ as $M$-$M$ bimodules.
	
Let $\sigma^\varphi_t(x)=\Delta_\varphi^{it}x \Delta_{\varphi}^{-it}$ for $t\in \R$ be the modular automorphism group of $\varphi$.
The continuous core of $M$, $M\rtimes_{\sigma^\varphi}\R=c_\varphi(M)$, is the von Neumann algebra generated by
$$
\{\lambda_\varphi(t):=\lambda(t)\ot 1, \pi_\varphi(x)\mid t\in \R, x\in M\}\subset \B(L^2\R\ot L^2(M,\varphi)),
$$
where $\lambda(t)$ is the left regular representation of $\R$ on $L^2\R$,
	and $(\pi_\varphi(x)\xi)(s)=\sigma_{-s}^\varphi(x)\xi(s)$
	for $x\in M$ and $\xi\in L^2(\R, L^2(M,\varphi))=L^2\R\ot L^2(M,\varphi)$.
The Hilbert space $L^2\R\ot L^2(M,\varphi)$ also admits a right $c_\varphi(M)$-module structure given by
$(\xi\cdot x)(t)=\xi(t)\cdot x$ and $(\xi\cdot \lambda_\varphi(s))(t)=\Delta_\varphi^{-is}\xi(t-s)$
	for $\xi\in L^2\R\ot L^2(M,\varphi)$, $x\in M$ and $s,t\in \R$.

If we denote by $\tilde \varphi$ the dual weight to $\varphi$ on $c_\varphi(M)$,	
	the natural $c_\varphi(M)$-bimodule $L^2(c_\varphi(M),\tilde \varphi)$ may be identified with $L^2\R\ot L^2(M,\varphi)$
	by identifying $\lambda_\varphi(f)x$ with $f\ot x\in L^2\R\ot L^2(M,\varphi)$
	for $f\in C_c(\R)$ and $x\in M$ with $\varphi(x^*x)<\infty$ \cite[Section X.1]{Take03}.
Let $h$ be the nonsingular positive self-adjoint operator affiliated with $L_\varphi(\R)=\{\lambda_\varphi(t)\mid t\in \R\}''\subset c_\varphi(M)$
	such that $\lambda_\varphi(t)=h^{it}$, then $\Tr_\varphi=\tilde \varphi(h^{-1/2}\cdot h^{-1/2})$
	is a semifinite faithful normal tracial weight on $c_\varphi(M)$
	and thus we may identify  the $c_\varphi(M)$-bimodule $L^2(c_\varphi(M),\Tr_\varphi)$
	with $L^2(c_\varphi(M),\tilde \varphi)$ via $U_{\tilde\varphi,\Tr_\varphi}$, which is then further identified with $L^2\R\ot L^2(M,\varphi)$.

Suppose $N\subset M$ is a von Neumann subalgebra with a $\varphi$-preserving expectation,
	then $N$ is $\sigma^\varphi$-invariant and we further have $N\rtimes_{\sigma^\varphi}\R\subset M\rtimes_{\sigma^\varphi}\R$,
	which admits a conditional expectation that preserves both $\tilde \varphi$ and $\Tr_\varphi$.
It follows from \cite[Lemma 1.5]{HJKEN} that $U_{\tilde \varphi, \Tr_{\varphi}}$ takes the inclusion $L^2(c_\varphi(N),\Tr_\varphi)\subset  L^2(c_\varphi(M),\Tr_\varphi)$ to $L^2(c_\varphi(N),\tilde \varphi)\subset L^2(c_\varphi(M),\tilde \varphi)$,
	which is further identified with $L^2\R\ot L^2(N,\varphi)\subset L^2\R\ot L^2(M,\varphi)$.
Therefore we may identify the $c_\varphi(N)$-bimodule $L^2(c_\varphi(M)\ominus c_\varphi(N),\Tr_\varphi)$
	with $L^2\R\ot L^2(M\ominus N,\varphi)$.
When $\varphi$ is a state, we may take $N=\C$.

The continuous core of $M$ is independent of the choice of weight $\varphi$. If $\psi$ is another faithful normal semifinite weight on $M$,
	then one has a $*$-isomorphism $\Pi_{\psi,\varphi}: M\rtimes_{\sigma^\varphi}\R\to M\rtimes_{\sigma^\psi}\R$
	that is trace-preserving and restricts to the identity map on $M$.

\subsection{$q$-Araki-Woods factors}
Throughout this paper, we assume all orthogonal representations of $\R$ are strongly continuous representations on separable Hilbert spaces
	and all inner products are conjugate linear in the first variable.

For an orthogonal representation $U:\R\to \cO(\sH_\R)$,
	we reserve the notation $\sH_\C$ for $\sH_\R +i \sH_\R$, the complexification of $\sH_\R$, and
	denote by $\sH$ the closure of $\sH_\C$ under $\langle\cdot,\cdot\rangle_U$,
	where $\langle \xi,\eta\rangle_U=\langle(2A/1+A)\xi,\eta\rangle$
	and $A$ is the generator of the unitary representation $U:\R\to \cU(\sH_\C)$
	following \cite{Sh97}.

Given $q\in (-1,1)$, denote by $\cF_q(\sH)$ the $q$-Fock space of $\sH$, which is the completion of
	$\C\Omega\oplus(\oplus_{n\in \N} \sH^{\ot n})$ under
	$\langle \xi_1\ot \cdots \ot \xi_n, \eta_1\ot \cdots\ot \eta_m\rangle_q
	=\delta_{n,m}\sum_{\sigma\in S_n} q^{i(\sigma)} \prod_{k=1}^n \langle \xi_k, \eta_{\sigma(k)}\rangle_U$
	for $\xi_k, \eta_j\in \sH$,
	where $i(\sigma)$ is the inversion number of the permutation $\sigma\in S_n$.
Given any contraction $T\in \B(\sH)$, we denote by $\cF_q(T)$ the corresponding contraction on $\B(\cF_q(\sH))$
	that satisfies
	$\cF_q(T)(\xi_1\ot \cdots \xi_n)=T\xi_1\ot 	\cdots \ot T\xi_n$ for $\xi_i\in \sH$
	\cite[Lemma 1.4]{BKS97}.

For each $\xi\in \sH$, the $q$-creation operator $\ell(\xi)\in \B(\cF_q(\sH))$ is given by 
	$\ell(\xi)\Omega=\xi$ and $\ell(\xi)(\xi_1\ot\cdots\xi_n)=\xi\ot \xi_1\ot \cdots\ot \xi_n$,
	for any $\{\xi_k\}_{k=1}^n\subset \sH$.
The $q$-Araki-Woods algebra associated with $U:\R\to \cO(\sH_\R)$, introduced in \cite{Hia03},
	is defined by $\Gamma_q(\sH_\R, U)''=\{\ell(\xi)+\ell(\xi)^*\mid \xi\in \sH_\R\}''\subset \B(\cF_q(\sH))$.
It was recently showed in \cite{KSW23} that $\Gamma_q(\sH_\R, U)''$ is a factor whenever $\dim(\sH_\R)\geq 2$
	and hence we will refer to it as the $q$-Araki-Woods factor.
	
For any $\{\xi_i\}_{i=1}^n\subset \sH_\C$, there exists a unique element called the Wick operator 
	$W(\xi_1\ot \cdots \ot \xi_n)\in \Gamma_q(\sH_\R, U)''$
	given by the Wick formula (e.g.\ see \cite[Proposition 2.1]{BMRW23}) such that $W(\xi_1\ot \cdots \ot \xi_n)\Omega=\xi_1\ot \cdots \ot \xi_n$.
Note that the linear span of such operators is strongly dense in $\Gamma_q(\sH_\R, U)''$.
Similarly, for $\{\eta_i\}_{i=1}^n\subset \sH_\R'+i\sH_\R'$, one has a unique element called the right Wick operator
	$W_r(\eta_1\ot \cdots \ot \eta_n)\in \Gamma_q(\sH_\R, U)'$ such that $W_r(\eta_1\ot \cdots \ot \eta_n)\Omega=\eta_1\ot \cdots \ot \eta_n$
	and is given by the right Wick formula,
	where $\sH_\R'=\langle \xi\in \sH\mid \langle \xi, \eta\rangle_U\in\R\  \forall\eta\in \sH_\R\}$.

The vacuum state $\chi_U=\langle \Omega, \cdot \Omega\rangle_q	$ on $\Gamma_q(\sH_\R, U)''$ is called the $q$-quasi-free state
	and is a faithful normal state. 
	When there is no ambiguity, we will use $\chi$ instead of $\chi_U$ for brevity.
The corresponding modular automorphism is given by
	$\sigma^{\chi_U}_t(W(\xi))=W(U_t\xi)$ for $\xi\in \sH_\C$ and $t\in \R$.

\subsection{Ultraproduct von Neumann algebras}

Let $M$ be a $\sigma$-finite von Neumann algebra and $\cU\in \beta\N\setminus \N$ a non-principle ultrafilter.
Define
\[\begin{aligned}
	\cI_\cU(M)&=\{(x_n)_n\in M\mid x_n\to 0 *-{\rm strongly\ as\ }n\to \cU\},\\
	\cM^\cU(M)&=\{(x_n)_n\in M\mid (x_n)_n\cI_\cU(M)\subset \cI_\cU(M){\rm \ and\ }\cI_\cU(M)(x_n)_n\subset \cI_{\cU}(M)\}.
\end{aligned}
\]
The Ocneanu ultraproduct $M^\cU$ is the quotient $\cM^\cU(M)/\cI_\cU(M)$ \cite{Oc85}
	and we denote by $(x_n)^\cU$ the image of $(x_n)_n\in \cM^\cU(M)$ in $M^\cU$.
There is a canonical faithful normal expectation $E_M: M^\cU\to M$ given by $E_M((x_n)^\cU)=\lim_{n\to \cU} x_n$,
	where the limit is in weak$^*$.
If $\varphi$ is a faithful normal state on $M$, then $\varphi^\cU:=\varphi\circ E_M$ gives a faithful normal state on $M^\cU$.
See \cite{AnHa14} for a detailed treatment on ultraproducts.

\subsection{Bimodules}
Given von Neumann algebras $M$ and $N$, a Hilbert space $\cH$ is an $M$-$N$ bimodule if there is a
	$*$-representation $\pi_\cH:M\ot_{\rm alg} N^\op\to \B(\cH)$ such that $\pi$ is normal when restricted to $M\ot \C$
	and $\C\ot N^\op$. 
We use the notation ${_M}\cH{_N}$ to denote an $M$-$N$ bimodule $\cH$.

For $M$-$N$ bimodules ${_M}\cH{_N}$ and ${_M}\cK{_N}$, we say $\cH$ is weakly contained in $\cK$, denoted by ${_M}\cH{_N}\prec {_M}\cK{_N}$
	if $\|\pi_\cH(x)\|\leq \|\pi_\cK(x)\|$ for any $x\in M\ot_{\rm alg} N^\op$.
The standard form of $M$, $L^2(M)$, is an $M$-$M$ bimodule via $\pi_{L^2(M)}(a\ot b^\op)\xi=a Jb^*J \xi$,
	where $J$ is the modular conjugation,
	and similarly we may view $L^2(M)\ot L^2(N)$ as an $M$-$N$ bimodule.
	
An $M$-$N$ bimodule $\cH$ is weakly coarse if ${_M}\cH{_N}\prec {_M} L^2(M)\ot L^2(N){_N}$.
Observe that an $M$-$N$ bimodule is weakly coarse if and only if
	the set of vectors	$\xi\in \cH$ satisfying $M\ot_{\rm alg} N^\op\ni x\mapsto \langle \xi, \pi_\cH(x)\xi\rangle\in \C$ is min-continuous
	generates $\cH$ as an $M$-$N$ bimodule.
	
Let $M$ and $N$ be equipped with faithful normal states $\varphi$ and $\psi$, respectively. 
A vector $\xi$ in an $M$-$N$ bimodule $\cH$ is left $\psi$-bounded if
$$
L_\psi(\xi): N^\op \psi^{1/2}\ni (Ja^*J \psi^{1/2})\mapsto \pi_{\cH}(1\ot a^\op)\xi\in \cH
$$
extends to a bounded map on $L^2(N,\psi)$.
Similarly, a vector $\xi\in\cH$ is right $\varphi$-bounded if
	$R_\varphi(\xi):a\varphi^{1/2} \mapsto \pi_\cH(a\ot 1)\xi$ is bounded on $L^2(M,\varphi)$.
We refer the reader to \cite{Po86, Take03} for comprehensive treatments.

\subsection{Biexact and properly proximal von Neumann algebras}\label{sec: biexact and pp}

The notion of biexact groups was introduced in the seminal paper of Ozawa \cite{Oza04} (see also \cite{BO08})
	and its von Neumann algebra counterpart was introduced in \cite{DP23}.
Since we do not need the actual definition of biexact von Neumann algebras, we only list a few relevant properties for later use.

\begin{lem}\label{lem: biexact prelim}
The following statements are true.	
\begin{enumerate}
\item If a von Neumann algebra $M$ with a faithful normal state $\varphi$ is biexact, 
	then one has $L^2(M^\cU\ominus M, \varphi^\cU)$ is a weakly coarse $M$-$M$ bimodule for any non-principle ultrafilter $\cU\in \beta\N\setminus\N$. \label{item: biexact bimodule character}
\item All free Araki-Woods factors are biexact. \label{item: free araki-woods biexact}
\item Any $q$-Araki-Woods factor associated with a finite dimensional representation is biexact if the ${\rm C}^*$-algebra generated by $\{\ell(e_i)\}_{i=1}^n$ is nuclear, where $\{e_i\}_{i=1}^n$ is a basis of the Hilbert space of the orthogonal representation.
	\label{item: q-araki-woods biexact}
\end{enumerate}
\end{lem}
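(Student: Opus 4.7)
The plan is to treat all three items as consequences of results already available in the cited literature, checking that the hypotheses in each case align with those of the source theorems rather than reproving anything from scratch.

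For item~\eqref{item: biexact bimodule character}, the approach is to invoke the bimodule-level reformulation of biexactness developed in \cite[Section 7]{DP23}. Biexactness of $(M,\varphi)$ is encoded there by the existence of a ucp $M$-$M$-bimodular map from a boundary $\mathrm{C}^*$-algebra sitting above $\B(L^2(M,\varphi))$ into an $M$-$M$-bimodule that is weakly coarse modulo the $M$-compacts. A vector $\xi=(x_n)^\cU$ in $L^2(M^\cU\ominus M, \varphi^\cU)$ produces a bimodular functional on $\B(L^2(M,\varphi))$ via the vector-state construction on the ultrapower representation, and orthogonality to $L^2(M,\varphi)$ forces the representing sequence to converge to zero in the strong operator topology, so the functional vanishes on the ideal of $M$-compact operators. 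Consequently the functional factors through the weakly coarse quotient, and this passes the coarseness property to the $M$-sub-bimodule of $L^2(M^\cU\ominus M, \varphi^\cU)$ generated by $\xi$. Ranging over all such $\xi$ yields the claim.

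For items~\eqref{item: free araki-woods biexact} and~\eqref{item: q-araki-woods biexact}, the plan is to apply the general biexactness criterion for Fock-type von Neumann algebras from \cite{DP23}, which reduces biexactness to nuclearity of the $\mathrm{C}^*$-algebra generated by the creation operators acting on the corresponding Fock space. For free Araki-Woods factors this nuclearity is classical and is combined with the boundary analysis of \cite{HoIs17} to give biexactness in the non-tracial setting. For item~\eqref{item: q-araki-woods biexact}, the hypothesis that $\mathrm{C}^*(\{\ell(e_i)\}_{i=1}^n)$ is nuclear is exactly the input required to run the same scheme that \cite{DP23} employs for $q$-Gaussian factors in finite dimensions, so the argument transfers essentially verbatim from the Bo\.zejko--Speicher construction to Hiai's construction.

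The main obstacle, were one to attempt a self-contained proof, would be item~\eqref{item: biexact bimodule character}: building the boundary bimodule and verifying its functorial and coarseness properties is the technical heart of \cite[Section 7]{DP23}, and any proof from first principles would essentially have to reproduce that apparatus. Once this framework is assumed, items~\eqref{item: free araki-woods biexact} and~\eqref{item: q-araki-woods biexact} collapse into nuclearity checks for explicit Toeplitz-type $\mathrm{C}^*$-algebras, which are routine given the existing literature.
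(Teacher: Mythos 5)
Your proposal takes essentially the same route as the paper: all three items are attributed to the literature (item (1) to the ultrapower/coarse-bimodule characterization of biexactness in \cite[Section 7]{DP23}, items (2)--(3) to a combination of a Fock-space boundary analysis and a nuclearity input). Two small imprecisions are worth flagging. First, in your sketch of item (1) you assert that orthogonality of $\xi=(x_n)^\cU$ to $L^2(M,\varphi)$ forces $x_n\to 0$ in the \emph{strong} operator topology; in fact $E_M((x_n)^\cU)=0$ only gives weak$^*$ (hence WOT) convergence to $0$. Luckily this is enough: a compact operator sends the weakly null net $x_n\varphi^{1/2}$ to a norm-null net, so the induced vector functional still vanishes on the $M$-compacts, and the argument proceeds. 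Second, in items (2)--(3) you summarize the criterion from \cite{DP23} as ``reducing biexactness to nuclearity of the ${\rm C}^*$-algebra generated by the creation operators.'' That is only half of the input. The criterion actually used is strong condition~(AO) in the sense of \cite[Definition 2.6]{HoIs17} (which then implies biexactness by \cite[Theorem 7.17]{DP23}); besides exhibiting a nuclear ${\rm C}^*$-algebra containing a weakly dense subalgebra of $M$, one also needs compactness of the commutators $[A,JAJ]$. For the $q$-Araki-Woods case this compactness is the content of \cite[Lemma 3.1]{Shl04}, which is the nonobvious computation the paper points to, and the nuclearity hypothesis itself is verified unconditionally by \cite{JSW94} for $|q|<\sqrt{2}-1$ and by \cite{Ku22} for all $|q|<1$. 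Your plan would read as complete once the commutator-compactness component is made explicit.
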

\begin{proof}
Item~\ref{item: biexact bimodule character} is due to \cite[Theorem 7.19, 7.20]{DP23}
	and item~\ref{item: free araki-woods biexact} is due to \cite[Theorem C.2]{HoIs17} and \cite[Theorem 7.17]{DP23}.
For item~\ref{item: q-araki-woods biexact}, notice that the computation in \cite[Lemma 3.1]{Shl04}
	shows that $\{\ell (e_i)+\ell (e_i)^*\mid i=1,\dots, n\}''$ satisfies strong condition (AO) \cite[Definition 2.6]{HoIs17}
	and hence biexact \cite[Theorem 7.17]{DP23}.
The assumption that ${\rm C}^*(\ell(e_1), \dots, \ell(e_n))$ is nuclear is verified by \cite{JSW94} for $|q|<\sqrt 2-1$
	and by \cite{Ku22} for $|q|<1$.
\end{proof}

We recall the small-at-infinity boundary for a von Neumann algebra in the tracial setting from \cite{DKEP22}.
See \cite{DP23} for its definition in the general setting.

Let $M$ be a tracial von Neumann algebra.
An $M$-boundary piece $\X$ is a hereditary ${\rm C}^*$-subalgebra $\X\subset\B(L^2M)$
	such that $M\cap M(\X)\subset M$ and $JMJ\cap M(\X)\subset JMJ$ are weakly dense,
	and $\X\neq \{0\}$,
	where $M(\X)$ denotes the multiplier algebra of $\X$.
For convenience, we will always assume $\X\neq \{0\}$.
Given an $M$-boundary piece $\X$, define $\K_\X^L(M)\subset \B(L^2M)$ to be the $\|\cdot\|_{\infty,2}$ closure of $\B(L^2M)\X$,
	where $\|T\|_{\infty,2}=\sup_{a\in (M)_1}\|T\hat a\|$
		and $(M)_1=\{a\in M\mid \|a\|\leq 1\}$.
Set $\K_\X(M)=\K_\X^L(M)^*\cap \K_\X^L(M)$,
	then $\K_\X(M)$ is a ${\rm C}^*$-subalgebra that contains $M$ and $JMJ$ in its multiplier algebra \cite[Proposition 3.5]{DKEP22}. 
Put $\K^{\infty,1}_\X(M)=\overline{\K_\X(M)}^{_{\|\cdot\|_{\infty,1}}}\subset \B(L^2M)$, 
	where $\|T\|_{\infty,1}=\sup_{a,b\in (M)_1}\langle T\hat a, \hat b\rangle$,
	and the small-at-infinity boundary for $M$ relative to $\X$ is given by 
$$
\bS_\X(M)=\{T\in\B(L^2M)\mid [T,x]\in \K_\X^{\infty,1}(M),{\rm\ for\ any\ }x\in M'\}.
$$
When $\X=\K(L^2M)$, we omit $\X$ in the above notations.
Given a von Neumann subalgebra $P\subset M$, recall from \cite[Lemma 6.12]{DP23}
	that the $M$-boundary piece $\X$ associated with $P$ is the
	hereditary ${\rm C}^*$-subalgebra of $\B(L^2M)$ 
	generated by $\{x JyJ e_P\mid\  x,y\in M\}$,
	where $e_P: L^2M\to L^2P$ is the orthogonal projection.

Next we recall the notion of properly proximal von Neumann algebra from \cite{DKEP22}. This notion was first introduced for groups
	in \cite{BIP21}.

Let $(M,\tau)$ be a tracial von Neumann algebra and $N\subset M$ a (possibly nonunital) von Neumann subalgebra with expectation.
Given an $M$-boundary piece $\X\subset \B(L^2M)$, we denote by $\X^N=\overline{e_N \K_X(M) e_N}\subset \B(L^2N)$ the $N$-boundary piece
	associated with $\X$ (see \cite[Remark 6.3]{DKEP22}),
	where $e_N: L^2M\to L^2N$ is induced by the normal expectation from $M$ to $N$.
	
We say $M$ is properly proximal relative to $\X$
	if there is no nonzero central projection $z\in \cZ(M)$ and $Mz$-central state
	$\varphi: \bS_{\X}(M)\to \C$ such that $\varphi_{\mid Mz}=\tau_{\mid Mz}$,
	which is equivalent to the condition that
	there is no nonzero projection $z\in \cZ(M)$ and $Mz$-central state $\varphi: \widetilde \bS_\X(M)\to \C$
	such that $\varphi_{\mid Mz}=\tau_{\mid Mz}$
	by the same proof of \cite[Lemma 8.5]{DKEP22}.
Here, the notation $\widetilde \bS_\X(M)$ denotes
$$
\widetilde \bS_\X(M)=\{T\in (\B(L^2M)^\sharp_J)^*\mid [T,JaJ]\in (\K_\X(M)^\sharp_J)^*,{\rm\  for\ all\ }a\in M\},
$$
	$\B(L^2M)^\sharp_J\subset \B(L^2M)^*$ contains all functionals satisfying
	$M\ot_{\rm alg} M\ni a\ot b\mapsto \varphi(aTb)$
	and $JMJ\ot_{\rm alg} JMJ\ni a\ot b\mapsto \varphi(aTb)$ are binormal for any $T\in \B(L^2M)$,
	and $\K_\X(M)^\sharp_J$ is defined similarly.

We point out that the same proof of \cite[Lemma 3.2]{Din24} shows that there exists a u.c.p.\ map
	$$
	\widetilde E: \widetilde \bS_\X(M)\to \widetilde \bS_{\X^N}(N)
	$$	
	such that $\widetilde E_{\mid M}$ is the conditional expectation $E:M\to N$.
Thus if $N$ is not properly proximal relative to $\X^N$,
	then there exists a nonzero central projection $z\in \cZ(N)$
	and a $zN$-central state $\varphi:\widetilde\bS_\X(M)\to \C$ such taht $\varphi_{\mid zMz}=\tau_{zMz}$.	

\section{Weakly coarse bimodules for almost periodic $q$-Araki-Woods factors}\label{sec: bimod}

The aim of this section is to show certain bimodules for $q$-Araki-Woods factors associated with finite dimensional representations
	are weakly coarse, by building on ideas from \cite{Av11, Wil20} in the tracial case.
More precisely, we prove the following.

\begin{prop}\label{prop: coarse step lower bound}
Let $q\in (-1,1)\setminus\{0\}$ and 
	$U: \R\to \cO(\sL_\R)$ be an almost periodic representation with subrepresentations $\sK_\R\subset \sH_\R\subset \sL_\R$,
	where $2\leq \dim(\sK_\R)<\infty$.
Set $N=\Gamma_q(\sK_\R, U)''$, 
	$M=\Gamma_q(\sH_\R, U)''$, $\tilde M=\Gamma_q(\sL_\R, U)''$,
	view $N\subset M\subset \tilde M$ via inclusions of representations 
	and denote by $c(N)$, $c(M)$ and $c(\tilde M)$ the corresponding continuous cores, respectively.
	
Then there exists a constant $\kappa\in \N$ depending on $\sK_\R$ and $q$ such that 
$L^2(p c(\tilde M) p \ominus p c(M) p)^{\ot_{pc(M)p}^k}$ is weakly coarse as a $p c(N)p$-$p c(N)p$ bimodule,
	for any finite trace projection $p\in L_\chi\R$
	and any $k\geq \kappa$,
	where $\chi$ is the $q$-quasi-free state on $\tilde M$.
\end{prop}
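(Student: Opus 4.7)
My plan is to reduce the statement to a tracial Fock-space estimate of the type proved by Avsec~\cite{Av11} and Wilde~\cite{Wil20} for $q$-Gaussian algebras, using the continuous core identifications from Section~\ref{sec: prelim} to convert the non-tracial bimodule into a tractable tensor product.

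First, from Section~\ref{sec: prelim} one has $L^2(c(\tilde M),\Tr_\chi)\cong L^2\R\ot L^2(\tilde M,\chi)$ as $c(\tilde M)$-bimodules, and similarly for $c(M)$. Since the inclusion $c(M)\subset c(\tilde M)$ admits a trace-preserving conditional expectation inherited from the $\chi$-preserving expectation $\tilde M\to M$, I would obtain
\[
L^2(pc(\tilde M)p\ominus pc(M)p,\Tr_\chi)\cong pL^2\R \ot L^2(\tilde M\ominus M,\chi)\cdot p
\]
as a $pc(M)p$-bimodule.
Decomposing $L^2(\tilde M\ominus M,\chi)$ via the Fock filtration into closed spans of Wick vectors $W(\xi_1\ot\cdots\ot\xi_n)\Omega$ with at least one $\xi_i\in\sL\ominus\sH$, and then iterating the Connes tensor product over $pc(M)p$, identifies the $k$-fold bimodule with a Hilbert space assembled from iterated Wick words whose $(\sL\ominus\sH)$-letters are separated by intervening Wick words in $\sH$; the salient feature is that there are at least $k$ such $(\sL\ominus\sH)$-letters.

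Second, weak coarseness of this explicit bimodule as a $pc(N)p$-bimodule would follow by combining (i) the tracial Avsec--Wilde estimate, which shows that in the $q$-Gaussian setting $L^2(\Gamma_q(\sL_\R)''\ominus\Gamma_q(\sH_\R)'')^{\ot_{\Gamma_q(\sH_\R)''}^k}$ is weakly coarse as a $\Gamma_q(\sK_\R)''$-bimodule once $k$ is large enough, and (ii) Nou's polynomial norm bound~\cite{Nou04} (see also~\cite{Hia03}) for $q$-creation operators on $\cF_q(\sK)$, which determines the threshold $\kappa=\kappa(\sK_\R,q)$: one needs $k$ tensor factors in $\sL\ominus\sH$ to dominate the polynomial growth of $q$-Fock norms associated with the finite-dimensional subspace $\sK$.

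The main obstacle is transferring the tracial estimates to the continuous core without losing control of the $L^2\R$-factor under $p$-compression. Because $U$ is almost periodic, each spectral subspace $\sH_\lambda\subset\sH_\C$ is scaled by $\sigma^\chi_t$ via the phase $\lambda^{it}$, so $L_\chi\R$ acts essentially diagonally on the Fock space with respect to the eigenvalue grading. Consequently, a finite trace projection $p\in L_\chi\R$ amounts to a spectral truncation under which the left and right $pc(N)p$-actions on the $k$-fold tensor bimodule factor through the tracial $N$-bimodule actions up to bounded modular twists dominated by the spectrum of $p$. The Avsec--Wilde estimate then survives compression, producing min-continuous coefficients for a generating set of bounded vectors, and yields weak coarseness for all $k\geq\kappa$.
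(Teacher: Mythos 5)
Your outline correctly identifies the bimodule identification from Section~\ref{sec: prelim} and the high-level strategy (push into high Fock filtration levels, invoke a trace-class-type estimate), but there are three genuine gaps where the technical content of the paper's proof is either missing or misattributed.

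First, the crucial step of identifying the $k$-fold Connes tensor power $L^2(pc(\tilde M)p\ominus pc(M)p)^{\otimes^k_{pc(M)p}}$ with a concrete sub-bimodule of a Fock space is not obvious and cannot be done by inspection as you suggest. The paper does this by passing to the \emph{doubled} representation $\sL_\R^{(2)}=\sL_\R\oplus\sF_\R$ (where $\sF_\R=\sL_\R\ominus\sH_\R$) and proving an embedding lemma (Lemma~\ref{lem: embedding bimodules}) which realizes $L^2(N_{\hat 3})\ot_{N_1}L^2(N_{\hat 2})$ inside $L^2(N)$ for crossed-product corners $N_I$ of $q$-Araki-Woods factors over three orthogonal pieces. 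Iterating this embedding forces every tensor factor to contribute at least one ``new'' leg from a fresh copy of $\sF$, which is what lands the $k$-fold tensor power inside $\oplus_{n\geq k}\cL_n$ over the amplified representation. Your proposal simply asserts that iterated Connes tensor products produce ``iterated Wick words with $(\sL\ominus\sH)$-letters separated by $\sH$-words''; without the doubling trick you would still be tensoring inside a single copy of $\tilde M$, where the right-hand factor of a Connes tensor product is not an $M$-module in the needed way, and the claimed identification does not hold.

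Second, the threshold $\kappa$ is not obtained from Nou's polynomial norm bound. In the paper $\kappa$ comes from a trace-class estimate in the spirit of Wilde: the coefficient operator $T_\xi=L_\xi^*R_\xi$ on $L^2(N,\varphi)$ has matrix entries decaying like $|q|^{ki}$ on the level-$i$ subspace $\sK^{\ot i}$ (Lemma~\ref{lem: inequality for wk coarse}), and $T_\xi$ is trace class precisely when $|q|^k\dim(\sK_\R)<1$, i.e.\ $k>-\log(\dim\sK_\R)/\log|q|$. Nou's polynomial operator-norm bound is used only in Section~\ref{sec: iso}, for the non-biexactness lower bounds, and plays no role here.

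Third, the transfer from $N$-$N$ weak coarseness to $pc(N)p$-$pc(N)p$ weak coarseness is not handled by ``bounded modular twists dominated by the spectrum of $p$.'' The paper proves a general lemma: if a binormal representation of $(M\rtimes G)\ot_{\alg}(N\rtimes H)^\op$ restricts to a min-continuous map on $M\ot_{\alg}N^\op$, $G$ and $H$ are amenable, and there is a weakly dense exact $G$-invariant C$^*$-subalgebra, then the full representation is min-continuous. This is applied with $G=H=\R$ and exactness of $\Gamma_q(\sK_\R,U)\rtimes_{\rm red}\R$ (via \cite{KN11}) to obtain Corollary~\ref{cor: bound for coarse}. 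Almost periodicity of $U$ is used elsewhere in the paper, but it is not the mechanism controlling the $L^2\R$-factor in this coarseness argument, and the notion of ``factoring through the tracial bimodule actions up to bounded modular twists'' does not give the required min-continuity of the coefficient functional on $pc(N)p\ot_{\min}(pc(N)p)^\op$.
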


We fix the following notations throughout this section.
\begin{note}\label{note 1}
Let $q\in (-1,1)\setminus\{0\}$ be fixed.
Denote by $U: \R \to \cO(\sL_\R)$ an almost periodic representation and $\sH_\R\subset \sL_\R$ a subrepresentation.
Set $M=\Gamma_q(\sH_\R, U)''$, $\tilde M=\Gamma_q(\sL_\R, U)''$
	and denote by $\chi$ the $q$-quasi free state on $\tilde M$, which restricts to the $q$-quasi free state on $M$
	by viewing $M\subset \tilde M$ in the natural way.
Put $c_\chi(\tilde M)=\tilde M\rtimes_{\sigma^\chi}\R$ with semifinite trace $\Tr_\chi$,
	which also restricts to a semifinite trace on $c_\chi(M)=M\rtimes_{\sigma^\chi}\R$.

Consider $\sH_\R'=\sL_\R\ominus \sH_\R$, a subrepresentation of $\sL_\R$, and it follows that $\sL=\sH\oplus \sH'$.
For each $k\in \N$, we set $\cL_k\subset \cF_q(\sH\oplus \sH')=\cF_q(\sL)$ the closed subspace spanned by vectors
	$\xi_1\ot\xi_2\ot\cdots \ot\xi_n\in (\sH\oplus \sH')^{\ot n}$ with $k$ tensor legs in $0\oplus (\sH'\setminus\{0\})$ and the rest in $\sH\oplus 0$,
	and it follows that $\cF_q(\sH\oplus \sH')=\oplus_{k\geq 0}\cL_k$.	
\end{note}

\begin{rem}\label{rem: bimod identify}
By the discussion at the beginning of Section~\ref{sec: prelim}, we have an identification of $c_\chi(\tilde M)$-bimodules
	$\iota: L^2(c_\chi(\tilde M), \Tr_\chi)\to L^2\R\ot L^2(\tilde M,\chi)$
	which restricts to an identification of $c_\chi(M)$-bimodules
	$\iota: L^2(c_\chi(M),\Tr_\chi)\to L^2\R\ot L^2(M,\chi)$ as $M\subset \tilde M$ is with $\chi$-preserving expectation.
Similarly, $\iota$ maps $L^2(L_\chi(\R),\Tr_\chi)$ to $L^2\R\ot \Omega$.
Through this identification bimodules, we may view
$$
L^2(c_\chi(\tilde M) \ominus c_\chi(M),\Tr_\chi)=L^2\R\ot L^2(\tilde M\ominus M, \chi)=L^2\R\ot (\oplus_{k\geq 1} \cL_k),
$$
as $L^2(M,\chi)=\cF_q(\sH\oplus 0)\subset L^2(\tilde M,\chi)=\cF_q(\sH\oplus \sH')$.
Also note that $L^2\R\ot \cL_k\subset L^2\R\ot \cF_q(\sH\oplus \sH')$ is a $c_\chi(M)$-$c_\chi(M)$ sub-bimodule for each $k\in \N$.

If $p\in L_\chi(\R)$ is a finite trace projection, we also identify the following $pc_\chi(M)p$-bimodules
$$
L^2(pc_\chi(\tilde M)p, \Tr_\chi)=pJpJ L^2(c_\chi(\tilde M),\Tr_\chi)=pJpJ (L^2\R\ot \cF_q(\sH\oplus\sH')).
$$

\end{rem}

\begin{lem}\label{lem: inequality for wk coarse}
With Notation~\ref{note 1},
	suppose $\sK_\R\subset \sH_\R$ and $\sF_\R\subset \sL_\R$ 
	are finite dimensional subrepresentations.

For $m, n, i, j, k\in \N$, let 
	$\xi\in \sF^{\otimes m}\cap \cL_k$, $\eta\in \sF^{\ot n}\cap \cL_k$, $\zeta_1\in (\sK\oplus 0)^{\otimes i}$ and $\zeta_2\in (\sK\oplus 0)^{\ot j}$.
Then one has 
\begin{equation}\label{eq: fin dim 1}
|\langle \zeta_2, W(\eta)^* W(\zeta_1) W(\xi)\Omega\rangle_q|\leq C |q|^{ki}\|\zeta_1\|_q\|\zeta_2\|_q,
\end{equation}
where $C$ is a constant depending on $q$, $\xi$, $\eta$ and $U:\R\to\cO(\sK_\R)$.

Moreover, for any $f,g\in C_c(\R)$, one has 
$$|\langle \eta\ot g, W(\zeta_1) JW(\zeta_2)^*J (\xi\ot f)\rangle|\leq C' |q|^{ki}\|\zeta_1\|_q\|\zeta\|_q,$$
where $C'$ is a constant depending on $q$, $\sK_\R$, $\xi\ot f$ and $\eta\ot g$.
\end{lem}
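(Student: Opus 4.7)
The strategy adapts the $q$-Wick calculus of \cite{Av11, Wil20} to the non-tracial almost periodic setting, expanding each Wick operator into a $q$-weighted sum of creation/annihilation monomials and tracking the powers of $q$ that arise via $q$-commutation past ``barrier'' legs. By multilinearity, I first reduce to simple tensors: $\xi = \xi_1\otimes\cdots\otimes\xi_m$, $\eta = \eta_1\otimes\cdots\otimes\eta_n$, $\zeta_1 = \zeta_{1,1}\otimes\cdots\otimes\zeta_{1,i}$, and $\zeta_2 = \zeta_{2,1}\otimes\cdots\otimes\zeta_{2,j}$, with each $\xi_a,\eta_b \in \sF$ lying either entirely in $\sH$ or entirely in $\sH'$ (exactly $k$ of each in $\sH'$) and each $\zeta_{1,a},\zeta_{2,b} \in \sK \subset \sH$.

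For the first inequality, I would write the pairing as $\langle W(\eta)\zeta_2, W(\zeta_1)\xi\rangle_q$ and expand each factor using the Wick formula $W(v_1\otimes\cdots\otimes v_p) = \sum_{A\subseteq\{1,\dots,p\}} q^{\mathrm{inv}(A)}\ell_A\ell_{A^c}^*$, where $\ell_A$ and $\ell_{A^c}^*$ are products of creators and annihilators indexed by $A$ and $A^c$ in the appropriate order. Since each $\zeta_{1,a},\zeta_{2,b} \in \sK \subset \sH$ is orthogonal (in $\langle\cdot,\cdot\rangle_U$) to every vector in $\sH'$, annihilators in these Wick expansions can only consume $\sH$-legs; consequently every nonzero term in $W(\zeta_1)\xi$ retains all $k$ of $\xi$'s $\sH'$-legs, now sitting \emph{behind} a prefix of $\zeta_1$-legs, and every nonzero term in $W(\eta)\zeta_2$ retains all $k$ of $\eta$'s $\sH'$-legs as a prefix. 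In the resulting $q$-Fock inner product written as a sum over permutations, orthogonality $\langle\sH,\sH'\rangle_U = 0$ forces every matching permutation to pair the $\sH'$-legs of $\eta$ (on the left) with those of $\xi$ (on the right), and a direct position count shows that each such pairing contributes at least $i$ inversions, since the $\sH'$-leg of $\eta$ must cross all $i$ of $\zeta_1$'s legs in the prefix. Summed over the $k$ pairings, the total power of $q$ in any nonzero summand is at least $ki$. The remaining factors---residual $\sH$-inner products, finite combinatorial multiplicities, and Wick norm bounds $\|\ell(\zeta)\| \leq C(q)\|\zeta\|_q$---fold into a constant $C = C(q,\xi,\eta,U|_{\sK_\R})$ uniform in $\zeta_1,\zeta_2$.

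For the ``moreover'' inequality, I would invoke the identification of Remark~\ref{rem: bimod identify} together with the commutation $W(\zeta_1)\cdot JW(\zeta_2)^*J = JW(\zeta_2)^*J\cdot W(\zeta_1)$ (since $W(\zeta_1) \in \tilde M \subset c_\chi(\tilde M)$ and $JW(\zeta_2)^*J$ lies in the commutant). The right action $JW(\zeta_2)^*J$ acts tensor-wise on $L^2\R \otimes \cF_q(\sL)$ up to a bounded modular factor controlled by the almost-periodic eigenvalues of $U|_{\sK_\R}$ on the finite-dimensional space $\sK$; on the Fock factor it is implemented by a right Wick operator whose ``coefficient'' is the Tomita image of $\zeta_2$, of $q$-norm comparable to $\|\zeta_2\|_q$, and which commutes with the left action of $\tilde M$. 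The $L^2\R$ functions $f,g$ then contribute only a factor $\|f\|_2\|g\|_2$ (absorbed into $C'$), reducing the Fock computation to a variant of the first inequality with the modular-conjugate of $\zeta_2$ in place of $\zeta_2$; the same barrier-crossing count yields the $|q|^{ki}$ factor. The main obstacle is the combinatorial bookkeeping of $q$-inversions across the Wick expansion with the non-tracial modular twist from $U$, but the sharp orthogonality $\langle\sK,\sH'\rangle_U = 0$ cleanly isolates the decisive $|q|^{ki}$ factor from all bounded geometric contributions.
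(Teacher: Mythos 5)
Your overall strategy — expand the Wick operators, use the orthogonality $\langle\sH,\sH'\rangle_U=0$ to force the $\sH'$-legs of $\eta$ to pair only with $\sH'$-legs of $\xi$, and track the $q$-powers picked up when these pairings cross the $\zeta_1$-legs — is the same idea the paper implements. The difference is in how the combinatorics is justified: the paper does not attempt a ``position count'' from scratch, but instead invokes the closed-form product formula $W(\Xi_1)W(\Xi_2)W(\Xi_3)=\sum_\pi q^{cr(\pi)}(\prod_{(\ell,r)\in P(\pi)}\langle S\xi_\ell,\xi_r\rangle)W(\Xi_{S(\pi)})$ from \cite{EfPo03} (verified to hold verbatim in the non-tracial setting), its repackaging in \cite[Proposition 4.9]{CaIsWa21}, and then follows \cite[Proposition 6.6]{Wil20} line by line. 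This is not merely a convenience: your key step, that ``each such pairing contributes at least $i$ inversions, since the $\sH'$-leg of $\eta$ must cross all $i$ of $\zeta_1$'s legs in the prefix,'' is not correct as stated. When some $\zeta_1$-legs are annihilated against $\sH$-legs of $\xi$ or paired with $\sH$-legs of $\eta$, they are not in the ``created prefix,'' and the resulting pairs or singletons need not cross the $\sH'$-pair at all (e.g.\ an $\eta$-$\zeta_1$ pair nested inside the $\sH'$-pair contributes no crossing). Making the $|q|^{ki}$ estimate go through requires also exploiting the structure of the surviving Wick word $W(\Xi_{S(\pi)})$ and its pairing against $\zeta_2$, which is exactly what the intermediate formula $(3.3)$ of the paper organizes, with the $q$-factor emerging as $q^{r(n_2-j-s)}$ together with bounds on the contraction maps $m_r^{13},m_s^{12},m_j^{23}$. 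You flag this as ``the main obstacle'' but do not resolve it; the references do.

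A second, smaller omission: the paper isolates precisely where the non-tracial deformation enters — the operator $S$ appearing in $\langle S\xi_\ell,\xi_r\rangle$ and in the contractions $m_j$ is no longer an isometry, so $\|m_j\|\le(\|S_{|\tilde\sK}\|\,d^{1/2})^j$ rather than $d^{j/2}$, and this is why $C$ depends on the representation $U|_{\sK_\R}$ and not just on $\dim\sK_\R$. Your mention of a ``non-tracial modular twist'' gestures at this but does not identify the concrete effect on the constant. For the ``moreover'' part, your commutation/right-Wick-operator argument is equivalent to the paper's more direct computation
$|\langle\eta\otimes g, W(\zeta_1)JW(\zeta_2)^*J(\xi\otimes f)\rangle|
=|\int_\R\overline{g(t)}f(t)\,\langle J\zeta_2,W(\eta)^*W(U_t\zeta_1)W(\xi)\Omega\rangle_q\,dt|$
using that the left action carries the $t$-dependent modular twist $\sigma^\chi_{-t}$ while the right action does not, together with $\|U_t\zeta_1\|_q=\|\zeta_1\|_q$ and $\|J\zeta_2\|_q=\|\zeta_2\|_q$; both routes work, but the paper's reduction to the already-proven estimate with $\zeta_1\mapsto U_t\zeta_1$ is cleaner and avoids re-examining the Fock-space combinatorics.
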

\begin{proof}
Since $\sK_\R$ and $\sF_\R$ are finite dimensional,
	we may find another finite dimensional subspace $\tilde \sK_\R\subset \sL_\R$ containing $\sK_\R$ such that
	$\xi\in \tilde \sK^{\ot m}$ and $\eta\in \tilde \sK^{\ot n}$
	and thus (\ref{eq: fin dim 1}) only concerns $\Gamma_q(\tilde\sK_\R, U)''$,
	for which the proof follows almost the exact same argument as in the tracial case \cite[Proposition 6.6]{Wil20}.
Since the inequality (\ref{eq: fin dim 1}) is rather important for our subsequent uses, we briefly sketch the argument 
	and point out the necessary adjustments.
	
The starting point for proving 
	(\ref{eq: fin dim 1}) is the following formula for products of Wick words from \cite[Theorem 3.3]{EfPo03}
	(see also \cite[Lemma 4.3]{CaIsWa21}),
\begin{equation}\label{eq: intermediate 1}
	W(\Xi_1)W(\Xi_2)W(\Xi_3)=
\sum_{\pi} q^{cr(\pi)} (\prod_{(\ell,r)\in P(\pi)} \langle S\xi_{\ell}, \xi_r\rangle)W(\Xi_{S(\pi)}),
\end{equation}

where $\Xi_1=\ot_{i=1}^{n_1}\xi_i$, $\Xi_2=\ot_{i=n_1+1}^{n_1+n_2}\xi_i$, $\Xi_3=\ot_{i=n_1+n_2+1}^{n_1+n_2+n_3}\xi_i$
	with $\xi_i\in \tilde \sK_\C$,
	the summation is over all partitions $\pi$ of the set $\{1,\dots, n_1+n_2+n_3\}$ 
	that are in $P^{\leq 2}(n_1\ot n_2\ot n_3)$ (see \cite[Definition 4.2]{CaIsWa21}),
	$P(\pi)$ denotes all pairs in $\pi$, $S(\pi)$ denotes all singletons in $\pi$,
	$cr(\pi)$ is the crossing number of $\pi$ and
	$\Xi_{S(\pi)}=\xi_{s_1}\ot \cdots \ot \xi_{s_k}$ if $S(\pi)=\{s_1<\cdots<s_k\}$.
	
We point out that the proof for \cite[Theorem 3.3]{EfPo03} is combinatorial and 
	the only change in the setting of $q$-Araki-Woods is that
	the operator $S: \tilde \sK_\R+i\tilde \sK_\R\ni \xi+i\eta\mapsto \xi-i\eta\in \tilde \sK_\C$ involved in the Wick formula
	is no longer an isometry,
	while the rest of the proof carries out verbatim. 
	
The next step is to derive a version of \cite[Proposition 4.9]{CaIsWa21} based on (\ref{eq: intermediate 1}), which states that
\begin{equation}\label{eq: intermediate 2}
	W(\Xi_1)W(\Xi_2)W(\Xi_3)=\sum_{j,r,s}q^{r(n_2-j-s)} m_r^{13} m_s^{12} m_j^{23}(R^*_{n_1-r-s, r,s}(\Xi_1) 
		R^*_{s, n_2-s-j,j}(\Xi_2) R^*_{r, j, n_3-r-j}(\Xi_3)),
\end{equation}
where $R^*_{i,j,k}$ is the adjoint of the map $R_{i,j,k}:\tilde \sK_q^{\ot i}\ot \tilde \sK_q^{\ot j}\ot \tilde \sK_q^{\ot k}\ni \xi\ot \eta\ot \zeta \mapsto \xi\ot \eta\ot \zeta\in \tilde \sK_q^{\ot i+j+k}$,
	and $m_j: \tilde \sK_q^{\ot j}\ot \tilde \sK_q^{\ot j}\ni \xi\ot \eta\mapsto \langle S\xi, \eta\rangle$.
We remark that since $\tilde \sK_\R$ is finite dimensional, the conjugation operator $S$ is defined on $\tilde\sK$
	and hence one has 
	$S: \tilde \sK^{\ot j}\ni \xi_1\ot \cdots \xi_j\mapsto S\xi_j\ot \cdots \ot S\xi_1\in \tilde \sK^{\ot j}$ as well.

As in the previous step, the exact same argument of \cite[Proposition 4.9]{CaIsWa21} gives us (\ref{eq: intermediate 2}),
	with the only difference being the operator $m_j$ depends on the representation $U: \R\to \cO(\tilde \sK_\R)$.
	
Finally, to obtain (\ref{eq: fin dim 1}) we follow the proof of \cite[Proposition 6.6]{Wil20} which uses (\ref{eq: intermediate 2}).
	The only difference in our setting is that the norm of $m_j$ is bounded by $(\|S_{\mid \tilde\sK}\| d^{1/2})^j$,
	where $d=\dim(\tilde \sK)$, as oppose to $d^{j/2}$ as in \cite[Proposition 6.6]{Wil20}.
This results in the constant $C$ in (\ref{eq: fin dim 1}) not only depending on the dimension of $\tilde \sK_\R$ but also on the representation of $\R$ on $\tilde \sK_\R$.
		
For the moreover part, we compute
\[\begin{aligned}
|\langle \eta\ot g, W(\zeta_1) JW(\zeta_2)^*J (\xi\ot f)\rangle|=|\int_\R  \overline{g(t)} f(t)  \langle JW(\zeta_2)J\eta, W(U_t\zeta_1)\xi\rangle_q  dt|\leq C' q^{ki}\|\zeta_1\|_1\|\zeta_2\|_q,
\end{aligned}\]
	where $C'$ depends on $C$, $f$ and $g$, 
	as $\langle JW(\zeta_2)J\eta, W(U_t\zeta_1)\xi\rangle_q =\langle J\zeta_2, W(\eta)^* W(U_t\zeta_1) W(\xi)\Omega\rangle_q$.
\end{proof}

\begin{lem}\label{lem: wk coarse inte step}
Assuming Notation~\ref{note 1},
	let $\sK_\R\subset \sH_\R$ be a finite dimensional subrepresentation
	and $N:=\Gamma_q(\sK_\R, U)''\subset M$.

Then there exists some $\kappa\in \N$ depending on $U:\R\to \cO(\sK_\R)$ and $q$ such that
	the $c(M)$-$c(M)$ bimodule
	$\oplus_{k\geq \kappa}\cL_k\ot L^2\R$ is a weakly coarse when viewed as an $N$-$N$ bimodule.
\end{lem}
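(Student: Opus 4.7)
The plan is to establish weak coarseness by exhibiting, for a dense subset of vectors in $\oplus_{k \geq \kappa}\cL_k \otimes L^2\R$, coefficient functionals on $N \otimes_{\mathrm{alg}} N^{\mathrm{op}}$ that are min-continuous. A natural total set consists of vectors $\xi \otimes f$ with $\xi \in \sF^{\otimes m} \cap \cL_k$, where $\sF \subset \sL$ is a finite-dimensional subrepresentation containing $\sK_\R$, $m \geq k \geq \kappa$, and $f \in C_c(\R)$, since the linear span of Wick vectors is dense in $\cL_k$ and $C_c(\R)$ is dense in $L^2\R$.

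The first step is to expand an element $T = \sum_s a_s \otimes b_s^{\mathrm{op}} \in N \otimes_{\mathrm{alg}} N^{\mathrm{op}}$ in the Wick basis: write $a_s = \sum_i W(\zeta_{1,i}^{(s)})$ with $\zeta_{1,i}^{(s)} \in \sK^{\otimes i}$ and similarly for $b_s$. Evaluating the coefficient functional reduces to controlling each matrix coefficient of the form $\langle \xi \otimes f, W(\zeta_1)\, J W(\zeta_2)^* J\, (\xi \otimes f)\rangle$ with $\zeta_1 \in \sK^{\otimes i}$ and $\zeta_2 \in \sK^{\otimes j}$. The moreover part of Lemma~\ref{lem: inequality for wk coarse} yields the exponential bound $C' |q|^{ki} \|\zeta_1\|_q \|\zeta_2\|_q$, and applying the lemma to the adjoint matrix coefficient provides the symmetric estimate with $i$ replaced by $j$. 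Combining these gives exponential decay in $(i+j)/2$ (or in $\max(i,j)$).

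The second step is to convert these pointwise estimates into a min-norm bound. For this, I would use a Haagerup-type lower bound for $\|W(\zeta)\|$ in terms of $\|\zeta\|_q$ (polynomial in the tensor length), available in the $q$-Araki-Woods setting by the arguments of \cite{Nou04, Hia03}. Since $\dim(\sK^{\otimes i}) = d^i$ with $d = \dim \sK$, choosing $\kappa$ large enough that $|q|^\kappa d < 1$ makes all the geometric sums over Wick indices converge. This yields min-continuity of the coefficient functional with the advertised constant $\kappa$ depending only on $q$ and on the representation of $\R$ on $\sK_\R$. The $L^2\R$ factor is handled via the identification from Remark~\ref{rem: bimod identify}, with the modular action $\sigma^\chi$ on $N$ routed through the almost periodic spectral decomposition.

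The main obstacle is the quantitative comparison between coefficient-wise $\ell^2$-type bounds and the min-norm on $N \otimes_{\min} N^{\mathrm{op}}$, since naive Cauchy--Schwarz arguments only give Hilbert--Schmidt-type control. A cleaner alternative, modeled on \cite{Av11, Wil20}, is to bypass this step entirely by constructing an explicit bounded $N$-$N$ bimodular map from $\oplus_{k \geq \kappa}\cL_k \otimes L^2\R$ into (an amplification of) the coarse bimodule $L^2(N)\otimes L^2(N)$: an ``unscrambling'' map that uses the $k$-many $\sH_\R'$-legs of $\cL_k$ as a buffer separating the left and right $N$-actions, with boundedness guaranteed by the decay from Lemma~\ref{lem: inequality for wk coarse}. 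This would directly witness the desired weak containment and avoid the min-norm comparison altogether.
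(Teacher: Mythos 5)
You have correctly identified the reduction (pass to simple tensors $\xi=\xi_0\ot f$ with $\xi_0\in\sF^{\ot n}\cap\cL_k$, $f\in C_c(\R)$), the decay source (the moreover part of Lemma~\ref{lem: inequality for wk coarse}), and even the right threshold $|q|^\kappa\dim(\sK_\R)<1$, i.e.\ $\kappa>-\log(\dim\sK_\R)/\log|q|$. However, your attempt to close the argument has a genuine gap, which you yourself flag: summing coefficient-wise estimates over a Wick basis and invoking Haagerup-type bounds does not yield the min-norm comparison, and the Cauchy--Schwarz route only gives Hilbert--Schmidt-type control. Your proposed repair via an ``unscrambling'' bimodular embedding into an amplified coarse bimodule is only a gesture; it is not carried out, and for $q\neq 0$ the $q$-commutation relations obstruct the clean separation of left and right legs that such a map would require, so it is far from clear this route would work.

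The step you are missing is the trace-class mechanism that \cite{Wil20} (and the paper) actually use. Since $\xi$ is left and right $\varphi$-bounded ($\sF_\R$ is finite dimensional), one forms the operator $T_\xi=L_\xi^*R_\xi\in\B(L^2(N,\varphi))$, whose matrix coefficients are exactly $\langle J\zeta_2,T_\xi\zeta_1\rangle=\langle\xi,W(\zeta_1)JW(\zeta_2)^*J\xi\rangle$. The decay estimate from Lemma~\ref{lem: inequality for wk coarse}, combined with a trace-class criterion in the spirit of \cite[Lemma 6.5]{Wil20}, shows that $T_\xi$ is trace-class as soon as $|q|^\kappa\dim(\sK_\R)<1$: the geometric decay $|q|^{ki}$ beats the polynomial growth of the dimensions of $\sK^{\ot i}$. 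Once $T_\xi$ is trace-class, min-continuity of the coefficient functional $x\ot y^{\op}\mapsto\langle\xi,xJy^*J\xi\rangle$ follows immediately from the singular value decomposition $T_\xi=\sum_i\lambda_ie_i\ot f_i$: the functional is bounded by $\|T_\xi\|_{S_1}\|\sum_j x_j\ot Jy_j^*J\|$. This replaces your min-norm comparison by an intrinsic operator-theoretic fact about $T_\xi$, and no Wick expansion of the arguments $a_s,b_s$ is needed. In short: your estimate and your threshold on $\kappa$ are the right ingredients, but without the observation that they entail trace-classness of $L_\xi^*R_\xi$, the proof does not close.
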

\begin{proof}
Observe that it suffices to show that for any finite dimensional subrepresentation $\sF_\R\subset \sL_\R$,
	any $n, k\in \N$ with $n\geq k\geq \kappa$, any 
	$\xi_0 \in \sF^{\ot n}\cap \cL_k$ and any $f\in C_c(\R)$, 
	the $N$-$N$ bimodule $\cH$ generated by $\xi:=\xi_0\ot f$ is weakly coarse.

Put $\varphi=\chi_{\mid N}$ and 
	one checks that $\xi$ is a left and right $\varphi$-bounded vector as $\sF_\R$ is finite dimensional.
Set $\theta_\xi: N\ni x\mapsto L_\xi^* x L_\xi\in N$,
	which extends to a bounded map  $T_\xi=L_\xi^* R_\xi:L^2(N,\varphi)\to L^2(N,\varphi)$ \cite[Lemma 2]{OOT17}.
Notice that $\langle JyJ\varphi^{1/2} , T_\xi x\varphi^{1/2}\rangle=\langle \xi, x Jy^*J\xi\rangle$
	for $x,y\in N$
	and hence for $\zeta_1\in (L\oplus 0)^{\otimes i}$, $\zeta_2\in (L\oplus 0)^{\otimes j}$,
	we have $\langle J\zeta_2, T_\xi \zeta_1\rangle=\langle \xi, W(\zeta_1)JW(\zeta_2)^*J \xi\rangle$.
It then follows from Lemma~\ref{lem: inequality for wk coarse} and \cite[Lemma 6.5]{Wil20} 
	that $T_\xi\in \B(L^2(N,\varphi))$ is a trace-class operator
	if $\kappa>-\log(\dim(L_\R))/\log(|q|)$
	and hence 
	$$
	N\ot_{\rm min} N^{\op}\ni x\ot y^{\op}\mapsto \langle \xi, x Jy^*J\xi\rangle
	$$
	is continuous,
	which implies the desired result.
Indeed, if $T_\xi=\sum_{i=1}^\infty \lambda_i e_i\ot f_i$ denotes the singular value decomposition of $T_\xi$ with $\lambda_i\in \C$,
	$\{e_i\}$, $\{f_i\}$ orthonormal systems in $L^2(N,\varphi)$, 
	then for any $\sum_{j=1}^d x_j\ot y_j^\op\in N\ot N^{\op}$ one has
\[\begin{aligned}
	&|\sum_{j=1}^d\langle \xi, x_j Jy_j^* J\xi\rangle|
	=|\sum_{j=1}^d \sum_i \lambda_i \langle f_i, x_j\varphi^{1/2}\rangle \langle Jy_j J\varphi^{1/2}, e_i\rangle| \\
	=&|\sum_i \lambda_i\langle f_i\ot \varphi^{1/2}, (\sum_{j=1}^d x_j\ot Jy_j^*J)(\varphi^{1/2}\ot e_i)\rangle| \\
	\leq & \sum_i |\lambda_i|\|\sum_{j=1}^d x_j\ot Jy_j^* J\|=\|T_\xi\|_{S_1}\|\sum_{j=1}^d x_j\ot Jy_j^* J\|.
\end{aligned}\]
\end{proof}

\begin{lem}
Let $(M,\varphi)$ and $(N,\psi)$ be von Neumann algebras with faithful normal states
	and $G\actson (M,\varphi)$ , $H\actson (N,\psi)$ are state-preserving continuous actions of amenable locally compact groups.	
Suppose $\pi: (M\rtimes G)\ot_{\alg} (N\rtimes H)^{\op}\to \B(\cH)$ is a binormal representation.

If $\pi_{\mid M\otimes_{\alg} N^{\op}}$ is $\min$-continuous
	and there exists a weakly dense separable ${\rm C}^*$-subalgebra $A\subset M$ invariant under $G$-action
	such that $A\rtimes_{\rm red}G$ is exact.
Then $\pi$ is $\min$-continuous.
\end{lem}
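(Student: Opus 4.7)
The plan is to realize $\pi$ as the integrated form of a covariant representation of the $C^*$-dynamical system $(M\otimes_{\min} N^{\op}, G\times H, \alpha\otimes\beta)$ on $\cH$, and then use amenability of $G\times H$ together with exactness of $A\rtimes_{\rm red} G$ to identify the resulting crossed product with $(M\rtimes G)\otimes_{\min} (N\rtimes H)^{\op}$.

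First, the min-continuity of $\pi|_{M\otimes_{\alg} N^{\op}}$ gives a $*$-representation $\sigma: M\otimes_{\min} N^{\op}\to \B(\cH)$. The binormality of $\pi$ supplies unitaries $u_g:=\pi(\lambda_g^G\otimes 1)$ and $v_h:=\pi(1\otimes \mu_h^{H,\op})$ that implement the $G$ and $H$ actions by conjugation, covariantly with $\alpha$ and $\beta$; a direct check using that $\pi$ is a $*$-homomorphism shows that $u_g$ commutes with $v_{h'}$ and that $u_g\sigma(m\otimes n^{\op})u_g^*=\sigma(\alpha_g(m)\otimes n^{\op})$, and similarly for $v_h$. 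Thus $(\sigma, u, v)$ is a covariant representation of $(M\otimes_{\min} N^{\op}, G\times H)$. Since $G\times H$ is amenable, the full and reduced crossed products coincide, so the covariant pair integrates to a $*$-representation $\rho:(M\otimes_{\min} N^{\op})\rtimes (G\times H)\to \B(\cH)$.

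Next, restrict $\rho$ to the $(G\times H)$-invariant $C^*$-subalgebra $A\otimes_{\min} N^{\op}$ and apply the standard tensor-crossed product identification
$$
(A\otimes_{\min} N^{\op})\rtimes (G\times H)\ \cong\ (A\rtimes G)\otimes_{\min} (N\rtimes H)^{\op},
$$
which is a consequence of exactness of $A\rtimes_{\rm red} G$: the natural surjection from left to right is injective precisely because exactness of $A\rtimes_{\rm red} G$ ensures that min-tensoring with it commutes with the reduced-crossed-product construction when the acting group acts trivially on the second factor. This gives min-continuity of $\pi$ on $(A\rtimes G)\otimes_{\alg} (N\rtimes H)^{\op}$. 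To finish, we extend to the full $(M\rtimes G)\otimes_{\alg} (N\rtimes H)^{\op}$ using that $A\rtimes G$ is weakly dense in $M\rtimes G$: exactness of $A\rtimes G$ gives an isometric inclusion $(A\rtimes G)\otimes_{\min} (N\rtimes H)^{\op}\hookrightarrow (M\rtimes G)\otimes_{\min} (N\rtimes H)^{\op}$, while Kaplansky density with norm control and normality of $\pi|_{M\rtimes G}$ allow one to approximate each $x\in M\rtimes G$ by a bounded net from $A\rtimes G$ in SOT and transfer the norm bound into the limit via normality on the vN factor.

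The main obstacle is precisely this final normal-extension step. Min-continuity is a $C^*$-algebraic property that does not automatically transfer through weak closures on one factor; the bound $\|\pi(\sum_i x_i\otimes y_i^{\op})\|\le \|\sum_i x_i\otimes y_i^{\op}\|_{\min}$ must be produced from approximations $x_i^\beta\in A\rtimes G$. Controlling $\liminf_\beta\|\sum_i x_i^\beta\otimes y_i^{\op}\|_{\min}$ uses that for von Neumann algebras the $C^*$-min tensor norm equals the spatial norm, combined with SOT-lower-semicontinuity of operator norm in the spatial realization, to pass the Kaplansky approximation cleanly through the tensor product while remaining controlled by $\|\sum_i x_i\otimes y_i^{\op}\|_{\min}$.
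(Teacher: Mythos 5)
Your steps 1--4 track the paper's argument: both integrate the covariance coming from the $G$- and $H$-unitaries against the $\min$-continuous representation of $M\ot_{\min}N^{\op}$ and use amenability to get $\min$-continuity at the level of the reduced ${\rm C}^*$-crossed products $(A\rtimes_{\rm red}G)\ot_{\min}(N\rtimes_{\rm red}H)^{\op}$; the paper does $G$ and then $H$ while you do $G\times H$ at once, which is only cosmetic. One small miscue: you credit the identification $(A\ot_{\min}N^{\op})\rtimes_{\rm red}(G\times H)\cong(A\rtimes_{\rm red}G)\ot_{\min}(N\rtimes_{\rm red}H)^{\op}$ to exactness, but this holds for any $A$ whenever the group acts trivially on one tensor factor, since the reduced crossed product is spatial. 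Exactness is not needed there.

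Where the proposal genuinely breaks down is the final extension from the weakly dense ${\rm C}^*$-subalgebra to the von Neumann algebra. The paper handles this by invoking \cite[Lemma 9.2.9]{BO08}, which is precisely the place where exactness and separability of $A\rtimes_{\rm red}G$ are used substantively. Your Kaplansky-plus-SOT-lower-semicontinuity argument does not work: if $x_i^{\beta}\to x_i$ in SOT with norm control, lower semicontinuity of the operator norm gives $\bigl\|\sum_i x_i\ot y_i^{\op}\bigr\|_{\min}\leq\liminf_{\beta}\bigl\|\sum_i x_i^{\beta}\ot y_i^{\op}\bigr\|_{\min}$, which is the \emph{wrong} direction. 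What you actually need is an \emph{upper} bound $\liminf_{\beta}\bigl\|\sum_i x_i^{\beta}\ot y_i^{\op}\bigr\|_{\min}\leq\bigl\|\sum_i x_i\ot y_i^{\op}\bigr\|_{\min}$, and SOT approximation gives no control of that kind; indeed, if it did, the lemma would hold with no exactness hypothesis at all, which is false. This is exactly the content of \cite[Lemma 9.2.9]{BO08}: binormality plus a weakly dense separable \emph{exact} ${\rm C}^*$-subalgebra is what permits passing the $\min$-continuity bound through the weak closure. So you have relocated the use of exactness to a step where it is unnecessary, and omitted it from the step where it is indispensable, leaving a real gap in the argument.
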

\begin{proof}
Note that $\pi$ restricts to a representation of $(M\ot_{\min} N^{\op})\rtimes_{\alg} G$ which is continuous with respect to the norm on $(M\ot_{\rm min} N^{\op})\rtimes_{\rm red}G$ by the amenability of $G$.
As $H$ is also amenable, we have $\pi$ extends to  $(M\rtimes_{\rm red}G)\ot_{\min} (N\rtimes_{\rm red} H)^{\op}$.
The result then follows from \cite[Lemma 9.2.9]{BO08} as $\pi$ is binormal and $A\rtimes_{\rm red}G$ is separable and exact.
\end{proof}

\begin{cor}\label{cor: bound for coarse}
With Notation~\ref{note 1},
	let $\sK_\R\subset \sH_\R$ be a finite dimensional subrepresentation
	and $N:=\Gamma_q(\sK_\R, U)''\subset M$.
	
Then for $\kappa\in \N$ satisfying $\kappa>- \log(\dim(K_\R))/\log(|q|)$, one has
	$\oplus_{k\geq \kappa}\cL_k\ot L^2\R$ is a weakly coarse $c_\varphi(N)$-$c_\varphi(N)$ bimodule,
	where $\varphi=\chi_{\mid N}$.
\end{cor}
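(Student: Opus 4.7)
The plan is to apply the Lemma immediately preceding the corollary to upgrade the $N$-$N$ bimodule coarseness established in Lemma~\ref{lem: wk coarse inte step} to $c_\varphi(N)$-$c_\varphi(N)$ coarseness, since $c_\varphi(N)=N\rtimes_{\sigma^\varphi}\R$ is a crossed product by the amenable group $\R$. Set $\cH:=\oplus_{k\geq\kappa}\cL_k\ot L^2\R$, viewed as a $c_\varphi(N)$-$c_\varphi(N)$ sub-bimodule of $L^2(c_\chi(\tilde M),\Tr_\chi)$ by restricting the ambient $c_\chi(\tilde M)$-bimodule structure described in Remark~\ref{rem: bimod identify}. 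This yields a $*$-representation $\pi\colon c_\varphi(N)\ot_{\alg}c_\varphi(N)^{\op}\to\B(\cH)$ which is binormal because the left and right $c_\varphi(N)$-actions on $\cH$ are both normal. Lemma~\ref{lem: wk coarse inte step} (which produces the same bimodule as weakly coarse over $N$-$N$) asserts exactly that $\pi|_{N\ot_{\alg}N^{\op}}$ is $\min$-continuous, so the remaining task is to push $\min$-continuity up to the crossed products on both sides.

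To this end I would apply the preceding Lemma with $M$ and $N$ (in its notation) both equal to our $N$, and with $G=H=\R$ acting by the modular flow $\sigma^\varphi$. The only nontrivial hypothesis to verify is the existence of a weakly dense, separable, $\sigma^\varphi$-invariant ${\rm C}^*$-subalgebra $A\subset N$ such that $A\rtimes_{\rm red}\R$ is exact. A natural choice is $A:={\rm C}^*\bigl(\ell(\xi)+\ell(\xi)^*\colon\xi\in\sK_\R\bigr)\subset N$, which is weakly dense in $N=\Gamma_q(\sK_\R, U)''$ by definition and separable since $\dim\sK_\R<\infty$. The identity $\sigma^\varphi_t(W(\xi))=W(U_t\xi)$ together with $U$-invariance of $\sK_\R$ shows that $A$ is $\sigma^\varphi$-invariant, and norm-continuity of the action follows from norm-continuity of $U_t$ on the finite-dimensional space $\sK_\R$.

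For exactness of $A\rtimes_{\rm red}\R$ I would appeal to the nuclearity theorem of Kumar cited in Lemma~\ref{lem: biexact prelim}(\ref{item: q-araki-woods biexact}). Choosing an orthonormal basis $\{e_i\}$ of $\sK$ consisting of eigenvectors of the generator of $U|_\sK$ (possible by almost periodicity), the ambient ${\rm C}^*$-algebra $\cB:={\rm C}^*(\ell(e_i)\colon 1\leq i\leq\dim\sK_\R)\subset\B(\cF_q(\sK))$ is nuclear for all $|q|<1$. Since $U_t$ acts diagonally in the basis $\{e_i\}$, the automorphism $\Ad(\cF_q(U_t))$ preserves $\cB$, and $A\subset\cB$ as $\R$-equivariant ${\rm C}^*$-algebras. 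Hence $A\rtimes_{\rm red}\R\hookrightarrow\cB\rtimes\R$, and the latter is nuclear by amenability of $\R$, giving the desired exactness. The preceding Lemma then yields $\min$-continuity of $\pi$ on $c_\varphi(N)\ot_{\alg}c_\varphi(N)^{\op}$, which is precisely weak coarseness of $\cH$ as a $c_\varphi(N)$-$c_\varphi(N)$ bimodule. The only genuine obstacle in this plan is the exactness step, which leans on the deep nuclearity result of~\cite{Ku22}; everything else is a routine verification of the hypotheses of the preceding Lemma.
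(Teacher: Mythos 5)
Your proof is correct and follows essentially the same strategy as the paper: restrict the ambient $c_\chi(\tilde M)$-bimodule structure to get a binormal representation of $c_\varphi(N)\otimes_{\alg}c_\varphi(N)^{\op}$ on $\oplus_{k\geq\kappa}\cL_k\ot L^2\R$, invoke Lemma~\ref{lem: wk coarse inte step} for $\min$-continuity on $N\otimes_{\alg}N^{\op}$, and promote this to the crossed products via the unnumbered lemma, using amenability of $\R$.

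The one place you diverge is the verification that some weakly dense separable $\sigma^\varphi$-invariant ${\rm C}^*$-subalgebra $A\subset N$ has $A\rtimes_{\rm red}\R$ exact. The paper handles this by simply citing \cite{KN11} for exactness of $\Gamma_q(\sK_\R,U)$ (noting that exactness passes to subalgebras), from which exactness of the crossed product by the amenable group $\R$ is immediate. You instead route through nuclearity of the $q$-deformed Cuntz-type algebra $\cB={\rm C}^*(\ell(e_i))$ via \cite{Ku22}, using the $\R$-equivariant embedding $A\hookrightarrow\cB$ (diagonal modular flow in an eigenbasis) to conclude $A\rtimes_{\rm red}\R\subset\cB\rtimes\R$, which is nuclear and hence exact. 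Both arguments are sound; the paper's is a touch more economical since it only needs exactness rather than the stronger nuclearity theorem, but your version has the pedagogical virtue of reusing the nuclearity input already present in Lemma~\ref{lem: biexact prelim}(\ref{item: q-araki-woods biexact}) and making the $\R$-equivariance of the ${\rm C}^*$-subalgebra completely explicit.
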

\begin{proof}
This is a direct consequence of the above two lemmas,  by noting that $\Gamma_q(\sK_\R, U)$ is exact by \cite{KN11}
	as exactness passes to subalgebras,
	which implies $\Gamma_q(\sK_\R, U)\rtimes_{\rm red}\R$ is exact.
\end{proof}

\begin{lem}\label{lem: embedding bimodules}
Let $U:\R\to \cO(\oplus_{i=1}^3 \sH_\R^{(i)})$ be an almost periodic representation with each $\sH_\R^{(i)}$ nontrivial.
Set $M=\Gamma_q(\oplus_{i=1}^3\sH_\R^{(i)}, U)''$ and $\chi$ its $q$-quasi free state.
For each subset $I\subset \{1,2,3\}$, let $M_I=\Gamma_q(\oplus_{i\in I} \sH_\R^{(i)}, U)''\subset M$
	and put $\cM=M\rtimes_{\sigma^\chi}\R$ with semifinite trace $\Tr_\chi$,
		and $\cM_I=M_I\rtimes_{\sigma^\chi}\R\subset \cM$.


Let $p\in L_\chi\R$ be a nonzero finite trace projection and set $N=p\cM p$, $N_I=p\cM_I p$.
Then we may identify $L^2(N_{\hat 3 })\ot_{ N_{1} } L^2(N_{\hat 2})$ as an $N_1$-$N_1$ sub-bimodule of $L^2(N)$,
	where we denote by $\hat i$ the set $I\setminus \{i\}$ and by $i$ the set $\{i\}$.
\end{lem}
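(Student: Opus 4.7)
The plan is to show that the natural multiplication map $\Phi\colon x\hat p \otimes y\hat p \mapsto xy\hat p$, for $x\in N_{12}$ and $y\in N_{13}$, extends to an isometric $N_1$-$N_1$ bimodule embedding $L^2(N_{12})\otimes_{N_1} L^2(N_{13}) \hookrightarrow L^2(N)$; its image is then the desired sub-bimodule. The key input is the commuting-square condition
\[
E_{N_{13}}|_{N_{12}} = E_{N_1}|_{N_{12}},
\]
which I would verify using the Fock-space structure together with the orthogonality of the three subrepresentations.

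First I would establish the commuting square at the level of $M$. For $x \in M_{12}$ we have $x\Omega \in \cF_q(\sH^{(1)}\oplus \sH^{(2)})$ and $E_{M_{13}}(x)\Omega = P_{\cF_q(\sH^{(1)}\oplus \sH^{(3)})}(x\Omega)$. The crucial claim is that $\cF_q(\sH^{(1)}\oplus \sH^{(2)})$ is orthogonal to $\cF_q(\sH^{(1)}\oplus \sH^{(3)}) \ominus \cF_q(\sH^{(1)})$ inside $\cF_q(\sH^{(1)}\oplus \sH^{(2)} \oplus \sH^{(3)})$: in the Wick formula for the relevant inner product, any permutation contributing a nonzero term would have to pair a $\sH^{(3)}$-leg with a leg in $\sH^{(1)}\oplus \sH^{(2)}$, but such a pairing $\langle\cdot,\cdot\rangle_U$ vanishes. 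Hence $P_{\cF_q(\sH^{(1)}\oplus \sH^{(3)})}(x\Omega) = P_{\cF_q(\sH^{(1)})}(x\Omega) = E_{M_1}(x)\Omega$. The commuting square lifts to the continuous cores via $E_{\cM_{13}}(x\lambda_\chi(t)) = E_{M_{13}}(x)\lambda_\chi(t) = E_{M_1}(x)\lambda_\chi(t) = E_{\cM_1}(x\lambda_\chi(t))$ for $x \in M_{12}$, $t \in \R$, and descends to $N_{I} = p\cM_{I}p$ since $p \in L_\chi\R \subset \cM_1$.

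Given the commuting square, the isometry of $\Phi$ is a direct calculation. For finite sums with $x_i \in N_{12}$, $y_i \in N_{13}$, and $\operatorname{tr}$ denoting the trace on $N$ inherited from $\Tr_\chi$, trace-preservation and $N_{13}$-bimodularity of $E_{N_{13}}$ give
\[
\|\textstyle\sum_i x_iy_i\hat p\|^2_{L^2(N)} = \textstyle\sum_{i,j} \operatorname{tr}(y_j^* E_{N_{13}}(x_j^*x_i) y_i) = \textstyle\sum_{i,j} \operatorname{tr}(y_j^* E_{N_1}(x_j^*x_i) y_i) = \|\textstyle\sum_i x_i\hat p \otimes y_i\hat p\|^2_{L^2(N_{12})\otimes_{N_1} L^2(N_{13})},
\]
where the last equality is the standard formula for the Connes tensor product inner product with right $N_1$-valued inner product $\langle x\hat p, x'\hat p\rangle_{N_1} = E_{N_1}(x^* x')$. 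Well-definedness on the balanced tensor product follows from $\Phi(xc\hat p \otimes y\hat p) = xcy\hat p = \Phi(x\hat p \otimes cy\hat p)$ for $c \in N_1$, and $N_1$-bimodularity from $\Phi(a(x\hat p\otimes y\hat p)b) = axyb\hat p$ for $a,b \in N_1$. Extending by density yields the claimed embedding.

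The main obstacle is the commuting-square verification, where the Wick formula combined with the orthogonality $\sH^{(2)}\perp\sH^{(3)}$ (and $\sH^{(2)}\perp\sH^{(1)}$, $\sH^{(3)}\perp\sH^{(1)}$) plays the decisive role. Notably, this commuting square holds for \emph{all} $q \in (-1,1)$ despite the failure of the stronger amalgamated-freeness condition $M = M_{12} *_{M_1} M_{13}$ when $q \neq 0$, and is precisely what makes the bimodule identification possible in this deformed setting.
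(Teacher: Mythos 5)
Your proof is correct and follows essentially the same route as the paper: both reduce the isometry of the multiplication map to the commuting-square identity $E_{N_{\hat 2}}|_{N_{\hat 3}}=E_{N_1}|_{N_{\hat 3}}$, verified by the Fock-space orthogonality of $\cF_q(\sH^{(1)}\oplus\sH^{(2)})$ against $\cF_q(\sH^{(1)}\oplus\sH^{(3)})\ominus\cF_q(\sH^{(1)})$ coming from $\sH^{(2)}\perp_U\sH^{(3)}$ in the $q$-inner product. The only (cosmetic) difference is that you verify the commuting square at the $M$-level and lift it through $\lambda_\chi$ and the corner $p$, whereas the paper works directly in the core via an integral decomposition of $\langle x\xi,\eta\rangle$ and then reduces to the same $M$-level statement.
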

\begin{proof}
Define $\theta: N_{\hat 3}\ot_{\rm alg} L^2(N_{\hat 2})\ni a\ot \xi \mapsto a\xi\in L^2(N)$,
	where we view $L^2(N_{\hat 2})$ as a subspace of $L^2(N)$.
It is clear that $\theta$ is $N_1$-bimodular 
	and thus it suffices to show that $\theta$ extends to an embedding on $L^2(N_{\hat 3 })\ot_{ N_{1} } L^2(N_{\hat 2})$.

For $\xi, \eta\in L^2(N_{\hat 2})$ and $a,b\in N_{\hat 3}$, one has
	$\langle \theta(a\ot \xi), \theta(b\ot \eta)\rangle=\langle a\xi, b\eta\rangle=\langle b^*a \xi, \eta\rangle$.
Thus to see $\theta$ extends to an isometry on $L^2(N_{\hat 3 })\ot_{ N_{1} } L^2(N_{\hat 2})$, 
	it is enough to prove that 
	$\langle x \xi, \eta\rangle=\langle E_{N_{1}}(x)\xi, \eta\rangle$ for any $x\in N_{\hat 3}$ and $\xi,\eta\in L^2(N_{\hat 2})$.

We claim that for $x\in C_c(\R, M_{\hat 3})$ and $\xi,\eta\in C_c(\R, \cF_q(\sH^{(1)}\oplus \sH^{(3)}))$, 
	one has
$$
\langle x\xi, \eta\rangle=\langle E_{\cM_1}(x)\xi,\eta\rangle,
$$
where $E_{\cM_1}: \cM\to \cM_1$ is the normal expectation.
Observe that from our claim one also has the same equation holds for $x\in p\cM_{\hat 3} p$
	and $\xi,\eta \in L^2(p\cM_{\hat 2} p)$ by a density argument.
Since ${E_{\cM_1}}_{\mid p\cM_1 p}$ coincides with the normal expectation from $p\cM p$ to $p\cM_1 p$,
	our desired conclusion follows.

To see our claim, we compute
\[
\begin{aligned}
\langle x\xi, \eta\rangle=\int_\R \langle \lambda_sx(s)\xi,\eta\rangle ds
	=\int_\R \Big(\int_\R \langle \sigma_{-t}(x(s))\xi(t),\eta(s+t)\rangle dt \Big)ds.
\end{aligned}
\]
If one has $\langle \sigma_{-t}(x(s))\xi(t),\eta(s+t)\rangle=\langle E_{M_1}\big (\sigma_{-t}(x(s))\big)\xi(t), \eta(s+t)\rangle$,
	then it follows that 
$$
\int_\R \langle \lambda_sx(s)\xi,\eta\rangle ds=\int_\R \langle \lambda_s E_{M_1}(x(s))\xi,\eta\rangle ds=\langle E_{\cM_1}(x)\xi,\eta\rangle.
$$
Therefore, we only need to prove that for $x\in \Gamma_q(\sH_\R^{(1)}\oplus \sH_\R^{(2)}, U)''$, $\xi,\eta\in \cF_q(\sH^{(1)}\oplus \sH^{(3)})$,
	one has $\langle x\xi,\eta\rangle=\langle E_{M_1}(x)\xi,\eta\rangle$, for which one easily verifies by using Wick formulas.
\end{proof}

\begin{proof}[Proof of Proposition~\ref{prop: coarse step lower bound}]
We first set some notations.
For an almost periodic representation $V:\R\to \cO(\mathsf J_\R)$ containing $\sH_\R$ as a subrepresentation,
	we denote by $\cL_k^{\sH\subset \mathsf J}\subset \cF_q(\mathsf J)$ the closed subspace spanned by simple tensors with $k$ legs in $\mathsf J\ominus \sH$
		and the rest in $\sH$.
	
Observe that $\cH:=L^2(p c(\tilde M)p\ominus pc(M) p)=p\big( (\oplus_{n\geq 1}\cL_n^{\sH\subset \sL})\ot L^2\R \big)p$
	by Remark~\ref{rem: bimod identify}. 
Put $\sF_\R= \sL_\R\ominus \sH_\R$ 
	and $\sL^{(2)}_\R=\sL_\R\oplus \sF_\R$, which contains $\sH_\R=\sH_\R\oplus 0\subset \sL_\R\oplus \sF_\R=(\sH_\R\oplus \sF_\R)\oplus \sF_\R$ as a subrepresentation.
It follows that we may apply Lemma~\ref{lem: embedding bimodules} to yield the following embedding of $p c(M) p$-bimodules
\begin{equation}\label{eq: bimod embed}
	L^2(p c(\tilde M) p)\ot_{pc(M)p} L^2(pc(\tilde M)p)\subset L^2(p c(\tilde M_2)p)
\end{equation}
	where $\tilde M_2=\Gamma_q(\sL_\R^{(2)}, U\oplus U)''$,
	and the first $\tilde M$ in (\ref{eq: bimod embed}) is identified with 
	$\Gamma_q((\sH_\R\oplus \sF_\R)\oplus 0)''\subset \tilde M_2$
	while the second $\tilde M$ is viewed as $\Gamma_q((\sH_\R\oplus 0)\oplus \sF_\R)''\subset \tilde M_2$.
Thus we also have $\cH\ot _{p c(M)p}\cH\subset L^2(p c(\tilde M_2) p)$.

Moreover, since $\cH=p\big( (\oplus_{n\geq 1}\cL_n^{\sH\subset \sL})\ot L^2\R \big)p$,
	we actually have
\begin{equation}\label{eq: embed amplify}
	\theta(\cH\ot_{p c(M)p}\cH)\subset p(\oplus_{n\geq 2}\cL_n^{\sH\oplus 0\subset \sL\oplus \sF}\ot L^2\R )p,
\end{equation}
	where $\theta$ is the embedding from Lemma~\ref{lem: embedding bimodules}.
Indeed, for any $x\in C_c(\R, \Gamma_q((\sH_\R\oplus \sF_\R) \oplus 0))$ with $x(s)\Omega\in \oplus_{n\geq 1} \cL_n^{\sH\oplus 0\subset \sL\oplus 0}$
	and 
	$$
	\xi\in  C_c(\R, \oplus_{n\geq 1} \cL_{n}^{(\sH\oplus 0)\oplus 0\subset (\sH\oplus 0)\oplus \sF}),
	$$
	one has
	$x\xi=\int_{\R} \lambda_s x(s)\xi ds$
	while $(x(s)\xi)(t)=\sigma_{-t}(x(s))\xi(t)$,
	which lies in $\oplus_{n\geq 2}\cL_n^{\sH\oplus 0\subset \sL\oplus \sF}$ by checking using Wick formulas.
A density argument then shows (\ref{eq: embed amplify}).

Finally, the conclusion follows by taking tensor power $k$-times, where $k> -\log(\dim(K_\R))/\log(|q|)$
	and applying Corollary~\ref{cor: bound for coarse}.
\end{proof}

Similarly, we have the following variant of Proposition~\ref{prop: coarse step lower bound} without $\R$-actions.
\begin{lem}\label{lem: wk coarse lower bound w/o R action}
Let $q\in (-1,1)\setminus\{0\}$ and $U:\R\to \cO(\sH_\R)$ be an almost periodic representation 
	with a finite dimensional subrepresentation $\sK_\R\subset \sH_\R$.
Set $N=\Gamma_q(\sK_\R, U)''$ and $M=\Gamma_q(\sH_\R, U)''$.

Then there exists some $k\in \N$ depending on $q$ and $\dim(\sK_\R)$ such that
	$L^2(M\ominus N)^{\ot _N^k}$ is weakly coarse as an $N$-$N$ bimodule.
\end{lem}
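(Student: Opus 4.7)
The plan is to mirror the proof of Proposition~\ref{prop: coarse step lower bound} in the simpler setting without continuous cores. Setting $\sK_\R'=\sH_\R\ominus \sK_\R$ and decomposing $\cF_q(\sH)=\bigoplus_{n\geq 0}\cL_n^{\sK\subset \sH}$, where $\cL_n^{\sK\subset \sH}$ is the closed span of simple tensors with $n$ legs in $\sK'$ and the rest in $\sK$, one obtains the $N$-$N$ bimodule identification $L^2(M\ominus N)=\bigoplus_{n\geq 1}\cL_n^{\sK\subset \sH}$.

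First, I would establish the ``$\R$-free'' variant of Lemma~\ref{lem: wk coarse inte step}: for any almost periodic representation $V:\R\to \cO(\mathsf J_\R)$ containing $\sK_\R$ as a subrepresentation and any integer $\kappa>-\log(\dim(\sK_\R))/\log(|q|)$, the $N$-$N$ bimodule $\bigoplus_{n\geq \kappa}\cL_n^{\sK\subset \mathsf J}$ is weakly coarse. The argument proceeds exactly as in Lemma~\ref{lem: wk coarse inte step} but invokes the first inequality of Lemma~\ref{lem: inequality for wk coarse} (the one without $f,g$): for $\xi\in \sF^{\ot n}\cap \cL_k^{\sK\subset \mathsf J}$ with $\sF_\R$ a finite-dimensional subrepresentation and $k\geq \kappa$, the operator $T_\xi=L_\xi^*R_\xi$ on $L^2N$ is trace class by \cite[Lemma 6.5]{Wil20} because $\sum_i |q|^{ki}\dim(\sK_\R)^i<\infty$. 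The amenability/exactness detour needed at the end of Lemma~\ref{lem: wk coarse inte step} is unnecessary here in the absence of crossed products.

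Next, I would prove the $\R$-free counterpart of Lemma~\ref{lem: embedding bimodules}: for an almost periodic $U:\R\to \cO(\bigoplus_{i=1}^3\sH_\R^{(i)})$ with each $\sH_\R^{(i)}$ nontrivial and $M_I=\Gamma_q(\bigoplus_{i\in I}\sH_\R^{(i)},U)''$, one has an $M_1$-$M_1$ bimodule embedding $L^2(M_{\hat 3})\ot_{M_1}L^2(M_{\hat 2})\hookrightarrow L^2(M_{\{1,2,3\}})$. This reduces to the identity $\langle x\xi,\eta\rangle=\langle E_{M_1}(x)\xi,\eta\rangle$ for $x\in M_{\hat 3}$ and $\xi,\eta\in L^2(M_{\hat 2})$, which is precisely what is verified by Wick formulas in the final paragraph of the proof of Lemma~\ref{lem: embedding bimodules}.

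Finally, I would apply this embedding iteratively with $\sH_\R^{(1)}=\sK_\R$ and $k$ successively introduced copies of $\sK_\R'$. At each step a leg in each new copy of $\sK'$ survives in the image, a Wick-formula bookkeeping argument parallel to the containment $\theta(\cH\ot_{pc(M)p}\cH)\subset p(\bigoplus_{n\geq 2}\cL_n^{\sH\oplus 0\subset \sL\oplus \sF}\ot L^2\R)p$ from the proof of Proposition~\ref{prop: coarse step lower bound}. This yields an $N$-$N$ bimodule embedding of $L^2(M\ominus N)^{\ot_N^k}$ into $\bigoplus_{n\geq k}\cL_n^{\sK\subset \mathsf J}$ with $\mathsf J_\R=\sK_\R\oplus\bigoplus_{i=1}^k\sK_\R'$. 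Choosing $k>-\log(\dim(\sK_\R))/\log(|q|)$ and applying the first step then gives the desired weak coarseness. The main difficulty is the iterated Wick-formula bookkeeping, but this mirrors the computation already performed inside Proposition~\ref{prop: coarse step lower bound}.
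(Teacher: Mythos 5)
Your proposal is correct and follows essentially the same route as the paper: an $\R$-free version of the embedding from Lemma~\ref{lem: embedding bimodules} to place $L^2(M\ominus N)^{\ot_N^k}$ inside $\bigoplus_{n\geq k}\cL_n^{\sK\subset\mathsf J}$ for $\mathsf J_\R=\sK_\R\oplus\bigoplus_{i=1}^k\sK_\R'$, followed by the trace-class estimate of Lemmas~\ref{lem: inequality for wk coarse} and \ref{lem: wk coarse inte step} once $k>-\log(\dim\sK_\R)/\log|q|$. One minor inaccuracy: the exactness/amenability lemma is actually invoked in Corollary~\ref{cor: bound for coarse} to pass from $N$ to $c_\varphi(N)$, not ``at the end of Lemma~\ref{lem: wk coarse inte step},'' but your point that it is not needed here is correct.
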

\begin{proof}
Setting $\sF_\R=\sH_\R\ominus \sK_\R$ and $\cH=L^2(M\ominus N, \chi)$, we use the some notations from the preceding proof.
Since the ideas are mostly identical to ones in the proof of Proposition~\ref{prop: coarse step lower bound}, we only sketch the argument.

We first show that $\cH\ot_N \cH\subset \oplus_{n\geq 2} \cL_n^{\sK\oplus 0\subset \sH\oplus \sF}$ following the same argument of Lemma~\ref{lem: embedding bimodules}.
Indeed, consider $\theta: M\chi^{1/2}\ot L^2M\ni a\chi^{1/2}\ot \xi\mapsto a\xi\in L^2(M_2)$,
	where $M_2=\Gamma_q(\sH_\R\oplus\sF_\R, U\oplus U)''$.
	
We claim that $\theta$ extends to $L^2M\ot_N L^2M$.
Note that $\langle b\eta, a\xi\rangle=\langle \eta, b^*a\xi\rangle=\langle\eta, E_N(b^*a)\xi\rangle$
	for $a,b\in \Gamma_q((\sH_\R\oplus 0)\subset M_2$ and $\xi,\eta\in \cF_q((\sK\oplus 0)\oplus \sF)\subset \cF_q(\sH\oplus \sF)$,
	where $E_N: M\to N$ is the expectation.
As $M\chi^{1/2}\ot L^2M\subset L^2M\ot _N L^2M$ is dense by \cite[IX, Proposition 3.15]{Take03},
	our claim follows.
Taking the restriction, we have $\theta: \cH\ot_N \cH\to \cF_q(\sH\oplus \sL)$.
Moreover, one verifies that $\theta(a\chi^{1/2}\ot \xi)\in \cL_2^{\sK\oplus 0\subset \sH\oplus \sF}$ if $a=W(\eta)$ and $\xi,\eta\in \cL_1^{\sK\subset \sH}$,
	from which we conclude that $\cH\ot_N \cH\subset \oplus_{n\geq 2} \cL_n^{\sK\oplus 0\subset \sH\oplus \sF}$ as $N$-$N$ bimodules.
	
Therefore we have $\cH^{\ot_N^k}\subset \oplus_{n\geq k} \cL_n^{\sK\subset \tilde \sH}$, where $\tilde \sH=\sH\oplus(\oplus_{i=1}^{k-1}\sF)$.
If $k>-\log(\dim(K_\R))/\log(|q|)$, then for any simple tensor $\xi\in \oplus_{n\geq k} \cL_n^{\sK\subset \tilde \sH}$, one has 
	$\overline{N\xi N}$ is weakly coarse by the same proof of Lemma~\ref{lem: wk coarse inte step}.
As such $\xi$ generates  $\oplus_{n\geq k} \cL_n^{\sK\subset \tilde \sH}$ as an $N$-$N$ bimodule, the conclusion follows.
\end{proof}

\section{Structural results for almost periodic $q$-Araki-Woods factors}

Building on the bimodule computation from the previous section, we prove various structural results for almost periodic $q$-Araki-Woods factors 
	in this section.
The foundation of these results is the following dichotomy for subalgebras in continuous cores of $q$-Araki-Woods factors,
	which can be seen as a type ${\rm III}$ version of \cite[Theorem 8.11]{DP23}
	and follows a similar proof.

\begin{thm}\label{thm: dichotomy}
Let $U: \R\to \cO(\sH_\R)$ be an almost periodic representation
	and $q\in (-1,1)\setminus\{0\}$.
Set $M=\Gamma_q(H_\R, U)''$ with its $q$-quasi free state $\chi$ and $p\in L_\chi \R$ a finite trace projection.
Denote by $\X$ the $p c_\chi(M) p$-boundary piece associated with $pL_\chi\R$.

For any (possibly nonunital) von Neumann subalgebra $N\subset p c_\chi(M) p$, we have
	either $N$ is properly proximal relative to $\X^N$,
		or $N$ has an amenable direct summand,
	where $\X^N$ denotes the $N$-boundary piece induced from $\X$.
\end{thm}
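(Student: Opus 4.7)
The plan is to prove the contrapositive, following the template of \cite[Theorem 8.1]{DP23} adapted to the continuous core setting: assuming $N\subset pc_\chi(M)p$ is not properly proximal relative to $\X^N$, I will produce a nonzero central projection $z\in \cZ(N)$ such that $zN$ is amenable.

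The first step is to invoke the characterization of non-proper-proximality recalled at the end of Section~\ref{sec: biexact and pp}: combining the failure of proper proximality of $N$ relative to $\X^N$ with the u.c.p.\ map $\widetilde E: \widetilde \bS_\X(pc_\chi(M)p)\to \widetilde \bS_{\X^N}(N)$ restricting to the conditional expectation on $N$, one obtains a nonzero central projection $z\in \cZ(N)$ and an $Nz$-central state $\varphi$ on $\widetilde \bS_\X(pc_\chi(M)p)$ coinciding with the (normalized) trace on $zNz$. The target is to upgrade $\varphi$ into an $Nz$-central hypertrace on $\B(L^2(pc_\chi(M)p))$, which by standard theory gives amenability of $zN$.

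Next I would set up the free dilation. Put $\tilde M=\Gamma_q(\sH_\R\oplus \sH_\R, U\oplus U)''$ with the inclusion $M\subset \tilde M$ arising from $\sH_\R\hookrightarrow \sH_\R\oplus 0$; this is $\chi$-preserving and descends to a trace-preserving inclusion $pc_\chi(M)p\subset pc_\chi(\tilde M)p$. The crux of the argument is to check that right multiplication by any $x\in pc_\chi(\tilde M)p$, compressed to an operator on $L^2(pc_\chi(M)p)$, defines an element of $\widetilde \bS_\X(pc_\chi(M)p)$. This is the $q$-Araki-Woods continuous-core analogue of the AO$^+$-type input in \cite[Theorem 8.1]{DP23}: commutators with $JaJ$ for $a\in pc_\chi(M)p$ can be expanded on the gap bimodule $L^2\R\otimes (\oplus_{k\geq 1}\cL_k)$ from Remark~\ref{rem: bimod identify}, and the compactness provided by $pL_\chi\R$-multipliers on the $L^2\R$ factor places the commutators in $\K_\X(pc_\chi(M)p)^\sharp_J$.

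Composing $\varphi$ with this embedding produces an $Nz$-central state $\psi$ on $pc_\chi(\tilde M)p$ restricting to the trace on $zNz$. Iterating the same construction $k$ times yields an $Nz$-central state $\psi_k$ on the $k$-fold iterated dilation, which after reindexing becomes an $Nz$-central functional on the $pc_\chi(M)p$-bimodule $L^2(pc_\chi(M)p)\oplus L^2(pc_\chi(\tilde M)p\ominus pc_\chi(M)p)^{\otimes_{pc_\chi(M)p}^k}$. Fix now any finite dimensional subrepresentation $\sK_\R\subset \sH_\R$, set $N_0=\Gamma_q(\sK_\R,U)''$, and take $k\geq \kappa$ as in Proposition~\ref{prop: coarse step lower bound}: the top Connes summand is then weakly coarse as a $pc_\chi(N_0)p$-bimodule. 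A Popa-type extraction converts the part of $\psi_k$ living on this summand into an $Nz$-central state on $\B(L^2(pc_\chi(N_0)p))$ extending the trace on $zNz$, while the $L^2(pc_\chi(M)p)$-component can be absorbed by recursion. Since $U$ is almost periodic, $M$ is the $\sigma$-weak closure of such subfactors $N_0$, and a weak-$*$ compactness argument across the net of finite dimensional $\sK_\R\subset \sH_\R$ produces the desired $Nz$-central hypertrace on $\B(L^2(pc_\chi(M)p))$, giving amenability of $zN$.

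The main obstacle will be the technical embedding of $pc_\chi(\tilde M)p$ into $\widetilde \bS_\X(pc_\chi(M)p)$ through right multiplication, which requires fine control of the gap bimodule against the boundary piece associated with $pL_\chi\R$; this is where almost-periodicity of $U$ enters critically, via the Wick-word analysis underlying Lemma~\ref{lem: inequality for wk coarse}. A secondary subtlety is the inductive handling of lower Connes summands in $\psi_k$, which should be addressed by orthogonally decomposing $\psi_k$ along the gap-bimodule stratification of Remark~\ref{rem: bimod identify} and recursively feeding back the $L^2(pc_\chi(M)p)$-part into the amenability certificate.
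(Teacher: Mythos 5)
You have the right skeleton — pass to the free dilation $\tilde M=\Gamma_q(\sH_\R\oplus\sH_\R,U\oplus U)''$, use the characterization of non-proper-proximality via a state on $\widetilde\bS_\X$, feed in the weak-coarseness input Proposition~\ref{prop: coarse step lower bound}, and approximate by finite-dimensional subrepresentations — but the two load-bearing technical steps are either not justified or differ substantively from what the paper does.

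First, the step you call the ``crux,'' namely that compressed right multiplication by any $x\in pc_\chi(\tilde M)p$ lies in $\widetilde\bS_\X(pc_\chi(M)p)$, is not what the paper proves, and it is not clear it is true in the form you state; the vague appeal to ``compactness provided by $pL_\chi\R$-multipliers'' and to the Wick-word estimates of Lemma~\ref{lem: inequality for wk coarse} does not deliver membership of commutators in $\K_\X(pc_\chi(M)p)^\sharp_J$. Lemma~\ref{lem: inequality for wk coarse} yields trace-class estimates used for \emph{weak coarseness} of bimodules, not $\|\cdot\|_{\infty,1}$-smallness of commutators with $JaJ$. The paper's mechanism for producing elements of $\widetilde\bS_\X(pc_\chi(M)p)$ is a one-parameter Ozawa--Popa deformation: the orthogonal transformation $V^0_{s,\sF}$ of $\sH_\R\oplus\sH_\R$ commuting with $\tilde U$ gives $\alpha_{s,\sF}=\Ad(V_{s,\sF}\otimes\id)$, and the compressions $\phi_{s,\sF}=e_{pc_\chi(M)p}(V_{s,\sF}^*\otimes\id)(\cdot)(V_{s,\sF}\otimes\id)e_{pc_\chi(M)p}$ are shown in Lemma~\ref{lem: bimodular and compact} to be asymptotically $pc_\chi(M)p$- and $Jpc_\chi(M)pJ$-bimodular in $\|\cdot\|_{\infty,1}$, with $\phi_{s,\sF}(e_{pc_\chi(M)p})\in\K_\X(pc_\chi(M)p)$. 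Only after taking a weak-$*$ limit of the $\phi_{s,\sF}$ does one obtain a u.c.p.\ map $\phi:\B(L^2(p\tilde c_\chi(\tilde M)p\ominus pc_\chi(M)p))\cap(Jpc_\chi(M)pJ)'\to\widetilde\bS_\X(pc_\chi(M)p)$. This deformation-based construction is the content your proposal is missing and cannot be replaced by an unproven assertion about right multipliers.

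Second, your endgame via ``iterated dilation $\Rightarrow$ $\psi_k$ $\Rightarrow$ Popa-type extraction $\Rightarrow$ hypertrace on $\B(L^2(pc_\chi(N_0)p))$ $\Rightarrow$ recursion on lower Connes summands $\Rightarrow$ weak-$*$ compactness across the net of $\sK_\R$'' does not close as described: a state that factors through a weakly coarse bimodule does not, in general, extract to an $N$-central hypertrace on $\B(L^2)$, and the recursive absorption of the $L^2(pc_\chi(M)p)$-component is undefined. The paper avoids hypertraces altogether. After composing $\varphi$ with $\phi$, one gets an $N$-central state on $\B(\cH)\cap(JPJ)'$ restricting to $\tau$ on $P=pc_\chi(M)p$, where $\cH=L^2(p\tilde c_\chi(\tilde M)p\ominus pc_\chi(M)p)$, and hence (by the Popa--Vaes criterion) a net of unit vectors $\xi_n\in\cH\otimes_P\bar\cH$ that is almost bi-tracial for $P$ and almost $N$-central. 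One then fixes $\varepsilon>0$, a central projection $e\in\cZ(N)$, unitaries $u_1,\dots,u_d\in\cU(Ne)$, chooses $\sF_\R$ with $\|E_\sF(u_i)-u_i\|_2$ small, forms $\xi_n^k=\xi_n^{\otimes_P^k}\in\cH^{\otimes_P^{2k}}$ with $2k\geq\kappa$, and uses the weak coarseness of $\cH^{\otimes_P^{2k}}$ as a $Q$-$Q$ bimodule (Proposition~\ref{prop: coarse step lower bound}) to bound $\|\sum_iE_\sF(u_i)\otimes E_\sF(u_i)^\op\|_{\min}$ from below by $d\tau(e)-3\varepsilon$; amenability of $N$ then follows from Haagerup's criterion. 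This is a norm estimate in the min tensor product, not an extraction of a hypertrace, and it sidesteps all the recursion and weak-$*$ compactness you invoke.

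So: right strategic picture, but the deformation Lemma~\ref{lem: bimodular and compact} replacing your right-multiplier claim, and the Popa--Vaes vector/Haagerup estimate replacing your hypertrace/extraction scheme, are both substantive gaps.
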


We first set some notations and prepare a lemma.

\begin{note}\label{note 2}
Let $U: \R\to \cO(\sH_\R)$ be an almost periodic representation 
	and set $\tilde \sH_\R=\sH_\R\oplus \sH_\R$ and $\tilde U=U\oplus U$.
View $M=\Gamma_q(\sH_\R, U)''\subset \tilde M=\Gamma_q(\tilde \sH_\R, \tilde U)''$ through the inclusion 
	$\sH_\R\oplus 0\subset \sH_\R\oplus\sH_\R$
	and let $\chi$ denote the $q$-quasi free state on $\tilde M$.
Put $(\tilde \cM, \Tr)=(\tilde M\rtimes_{\sigma^\chi}\R, \Tr_{\chi})$, which contains $\cM=M\rtimes_{\sigma^\chi}\R$.
	
For each $s\in [0,1]$ and finite dimensional subrepresentation $\sF_\R\subset \sH_\R$, set
	$A_{s,\sF}=\cos(\pi s/2)P_\sF$, where $P_\sF: \sH_\R\to \sF_\R$ is the orthogonal projection.
Note that as $[P_\sF, U_t]=0$ for all $t\in \R$, one has
$$
V^0_{s,\sF}=\begin{pmatrix}
	A_{s,\sF} & -\sqrt{1-A_{s,\sF}^2} \\
	\sqrt{1-A_{s,\sF}^2} & A_{s,\sF}	
	\end{pmatrix} 
	\in \cO(\sH_\R\oplus\sH_\R)
$$
commutes with $\{\tilde U_t\}_{t\in\R}$ and hence it induces a unitary $V_{s,\sF}:=\cF_q(V^0_{s,\sF})$ on the Fock space $\cF_q(\sH\oplus \sH)$
	that gives rise to $\alpha_{s,\sF}\in \Aut(\tilde\cM, \Tr)$,
	where $\alpha_{s,\sF}=\Ad(V_{s,\sF}\ot \id)\in \B(\cF_q(\tilde \sH)\ot L^2\R)$ \cite[Proposition 1.1]{Hia03}.
Note that ${\alpha_{s,\sF}}$ restrics to identity on $L_\chi\R$ and $JL_\chi\R J$, and $\alpha_{s,\sF}(W(\xi))=W(V_{s,\sF}\xi)$ for $\xi\in \tilde\sH_\C$.
	
\end{note}

Recall the identifications from Remark~\ref{rem: bimod identify}.
\begin{lem}\label{lem: bimodular and compact}
With the above notation, let $p\in L_\chi\R$ be a nonzero finite trace projection
	and consider the u.c.p.\ map $\phi_{s,\sF}: \B(L^2(p\tilde \cM p))\to \B(L^2(p\cM p))$
	given by 
	$$\phi_{s,\sF}(T)= e_{p\cM p} (V_{s,\sF}^*\ot\id)  T (V_{s,\sF}\ot\id)e_{p\cM p},$$
	where $e_{p\cM p}: L^2(p\tilde \cM p)\to L^2(p\cM p)$ is the orthogonal projection.
	
If $\X$ denotes the $p\cM p$-boundary piece associated with $pL_\chi\R$,	
	then $\phi_{s,\sF}(e_{p\cM p})\in \K_{\X}(p\cM p)$ for any $s\in (0,1]$ and $\sF_\R\subset \sH_\R$ finite dimensional.

Moreover, one has
	$\phi_{t, F}$ is almost $p \cM p$ and $Jp \cM pJ$ bimodular in $\|\cdot\|_{\infty,1}$, i.e.,
	$\|\phi_{s,\sF}(xTy)-x\phi_{s,\sF}(T)y\|_{\infty,1}\to 0$ as $s\to 0$ and $\sF_\R\to \sH_\R$
	for any $x,y\in p\cM p\cup Jp\cM pJ$ and $T\in \B(L^2(p \tilde \cM p))$.
\end{lem}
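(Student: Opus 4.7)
The plan is to compute $\phi_{s,\sF}(e_{p\cM p})$ explicitly via the tensor decomposition of $L^2(c_\chi(\tilde M))$, identify the resulting operator as an element of $\K_\X(p\cM p)$, and then deduce the bimodularity estimate from the standard commutator identity implementing $\alpha_{s,\sF}$.

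Using the identification $L^2(c_\chi(\tilde M))\cong L^2\R\ot \cF_q(\sH\oplus \sH)$ from Remark~\ref{rem: bimod identify}, the unitary $V_{s,\sF}\ot\id$ acts as $1_{L^2\R}\ot \cF_q(V^0_{s,\sF})$ and commutes with $L_\chi\R\cup JL_\chi\R J$ (since $\alpha_{s,\sF}$ fixes $L_\chi\R$). The projection $e_{p\cM p}$ decomposes as $pJpJ\bigl(1_{L^2\R}\ot P_{\cF_q(\sH\oplus 0)}\bigr)$, and the identity $P_{\cF_q(K_0)}=\cF_q(P_{K_0})$ for any closed subspace $K_0\subset K$---a consequence of the $q$-orthogonality between tensors with all legs in $K_0$ and those having at least one leg in $K_0^\perp$---combined with multiplicativity of $\cF_q$ gives
\[
P_{\cF_q(\sH\oplus 0)}\cF_q(V^0_{s,\sF})P_{\cF_q(\sH\oplus 0)}=\cF_q(P_{\sH\oplus 0}V^0_{s,\sF}P_{\sH\oplus 0})=\cF_q(A_{s,\sF})
\]
under the identification $\sH\oplus 0\cong \sH$. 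Thus $\phi_{s,\sF}(e_{p\cM p})=pJpJ\bigl(1_{L^2\R}\ot \cF_q(A_{s,\sF}^2)\bigr)$.

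Since $A_{s,\sF}^2=\cos^2(\pi s/2)P_\sF$ has finite-dimensional range $\sF$ and operator norm $r:=\cos^2(\pi s/2)<1$ for $s>0$, the series $\cF_q(A_{s,\sF}^2)=\sum_{n\geq 0}r^n P_{\sF^{\ot n}}$ converges in operator norm, each summand being a finite-rank projection onto the mutually orthogonal subspaces $\sF^{\ot n}\subset \cF_q(\sH)$. Writing $P_{\sF^{\ot n}}=\sum_i|\xi_{n,i}\rangle\langle \xi_{n,i}|$ via a $q$-orthonormal basis of $\sF^{\ot n}$ and introducing $R_\zeta:L^2\R\to L^2\R\ot \cF_q(\sH)$, $\zeta'\mapsto \zeta'\ot \zeta$, one verifies
\[
1_{L^2\R}\ot |\xi\rangle\langle\eta|=R_\xi R_\Omega^*\, e_{L_\chi\R}\, R_\Omega R_\eta^*
\]
using $R_\Omega R_\Omega^*=e_{L_\chi\R}$ and $R_\Omega^*R_\Omega=\id_{L^2\R}$. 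Since $e_{L_\chi\R}$ commutes with $pJpJ$ with $pJpJ\,e_{L_\chi\R}\,pJpJ=e_{pL_\chi\R}$, each rank-one term compresses to an operator of the form $T_1 e_{pL_\chi\R} T_2$ with $T_1,T_2\in \B(L^2(p\cM p))$. By $\|\cdot\|_{\infty,2}$-density of $p\cM p\cdot Jp\cM p J$ in $\B(L^2(p\cM p))$ one has $\B(L^2(p\cM p))e_{pL_\chi\R}\subset \K_\X^L(p\cM p)$, and since $\K_\X(p\cM p)$ is a $\mathrm{C}^*$-algebra, norm-convergent summation places $\phi_{s,\sF}(e_{p\cM p})$ in $\K_\X(p\cM p)$.

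For the asymptotic bimodularity, the implementation identity $(V_{s,\sF}\ot\id)x(V_{s,\sF}\ot\id)^*=\alpha_{s,\sF}(x)$ for $x\in\tilde\cM$ rearranges to $(V_{s,\sF}^*\ot\id)x-x(V_{s,\sF}^*\ot\id)=(\alpha_{s,\sF}^{-1}(x)-x)(V_{s,\sF}^*\ot\id)$, and because $J$ commutes with $V_{s,\sF}\ot\id$ (trace-preservation of $\alpha_{s,\sF}$) the analogous identity holds for $JyJ$. Expanding $\phi_{s,\sF}(xTy)-x\phi_{s,\sF}(T)y$ as a telescoping sum of two terms, each containing a factor $\alpha_{s,\sF}^{\pm 1}(z)-z$ with $z\in\{x,y\}$, and applying Cauchy-Schwarz with $\|\hat a\|_{\Tr_\chi,2}\leq \Tr_\chi(p)^{1/2}$ for $a\in (p\cM p)_1$, I bound $\|\phi_{s,\sF}(xTy)-x\phi_{s,\sF}(T)y\|_{\infty,1}$ by a constant depending on $\Tr_\chi(p),\|T\|,\|x\|,\|y\|$ times $\|\alpha_{s,\sF}^{-1}(x)-x\|_{\Tr_\chi,2}+\|\alpha_{s,\sF}(y)-y\|_{\Tr_\chi,2}$. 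The strong convergence $V^0_{s,\sF}\to \id_{\sH\oplus\sH}$ in the joint limit $s\to 0,\sF\to \sH$ yields $\alpha_{s,\sF}\to \id$ in the point-$\|\cdot\|_{\Tr_\chi,2}$ topology on $\tilde\cM$, completing the argument. The main technical obstacle is verifying the rank-one factorization and tracking the $pJpJ$ compressions cleanly in the $\K_\X$-membership step; once that is settled, both assertions follow the standard deformation/rigidity template.
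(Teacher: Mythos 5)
Your approach is essentially the same as the paper's: compute $e_\cM(V_{s,\sF}\ot\id)e_\cM$ explicitly as a compact operator on $\cF_q(\sH\oplus 0)$ tensored with $\id_{L^2\R}$ (your $\cF_q(A_{s,\sF})$ is what the paper calls $K=\sum_n\cos(\pi s/2)^nP_\sF^n$), cut by $pJpJ$, and relate it to the boundary piece through rank-one projections landing on $L^2\R\ot\C\Omega$; the bimodularity estimate is the standard deformation telescope. The computation $\phi_{s,\sF}(e_{p\cM p})=pJpJ(1\ot\cF_q(A_{s,\sF}^2))$ is correct (the paper works with a single copy of $\cF_q(A_{s,\sF})$ since it does not multiply out both $e_{p\cM p}(V^*\ot\id)e_{p\cM p}$ and $e_{p\cM p}(V\ot\id)e_{p\cM p}$), and the rank-one factorization via $R_\zeta$ is a valid and slightly more explicit route than the paper's conjugation identity $pJpxpJ(P_\Omega\ot\id)JpypJp=pJpJ((JxJP_\Omega JyJ)\ot\id)JpJp$. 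One flaw to fix: the appeal to ``$\|\cdot\|_{\infty,2}$-density of $p\cM p\cdot Jp\cM pJ$ in $\B(L^2(p\cM p))$'' is false (such products are nowhere near dense in that topology), but also unnecessary --- since $e_{pL_\chi\R}$ itself lies in the boundary piece $\X$ (take $x=y=p$ in the generating set $\{xJyJe_{pL_\chi\R}\}$), one has directly $\B(L^2(p\cM p))e_{pL_\chi\R}\subset\B(L^2(p\cM p))\X\subset\K_\X^L(p\cM p)$ by definition of $\K_\X^L$, with no density argument needed. After that correction the norm-convergent summation argument closes exactly as you indicate.
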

\begin{proof}
For any $0<s\leq 1$ and $\sF_\R\subset \sH_\R$ finite dimensional,
	one computes that
	$e_{\cM} (V_{s,\sF}\ot \id) e_\cM: L^2\cM\to L^2\cM$
	equals $K\ot \id_{L^2\R}$ via the identification $L^2\cM=\cF_q(\sH\oplus 0)\ot L^2\R$,
	where $K=\sum_{n=0}^\infty \cos(\pi t/2)^n P_\sF^n\in \K(\cF_q(\sH\oplus 0))$
	and $P_\sF^n=\cF_q(P_\sF)_{\mid H^{\ot n}_q}: \sH^{\ot n}_q\to \sF^{\ot n}_q$ while $P_\sF^0$ denotes $\id_{\C\Omega}$.
	
Since $V_{s,\sF}\ot \id$ commutes with $\lambda_{\chi}(\R)$ and $\rho_{\chi}(\R)$, we have
	$e_{p\cM p} (V_{s,\sF}\ot \id) e_{p\cM p}$
	coincides with $e_\cM(V_{s,\sF}\ot \id) e_{\cM} p JpJ$ and thus $\phi_{s,\sF}(e_{p\cM p})\in p JpJ \big(\K(\cF_q(\sH\oplus 0))\ot_{\rm alg} \B(L^2\R)\big)JpJp$.
	
Recall that $\X$, the $p\cM p$-boundary piece associated with $pL\chi\R$, is generated by $\{xJyJe_{pL_{\chi}\R}^{p\cM p} \mid x,y\in p\cM p\}$,
	where $e_{pL_\chi\R}^{p\cM p}: L^2(p\cM p)\to L^2(pL_\chi\R)$ is the orthogonal projection,
	which equals to $(P_{\Omega}\ot \id_{L^2\R})pJpJ=P_\Omega\ot \id_{pL^2\R}$ when viewd in $\B(L^2(M,\chi)\ot L^2\R)$,
	where $P_\Omega: \cF_q(\sH\oplus 0)\to \C\Omega$ is the rank-one projection.
It follows that $\phi_{s,\sF}(e_{p\cM p})\in \K_{\X}^{\infty,1}(p\cM p)$
	as we have
	$$
	pJpxpJ (P_{\Omega}\ot \id_{pL^2\R}) JpypJp=pJpJ \big((JxJP_\Omega JyJ)\ot \id_{L^2\R} \big)JpJ p,
	$$
	for any $x,y\in M$.
	
Next one observe that $\alpha_{s,\sF}(x)\to x$ strongly for any $x\in \cM$ as $s\to 0$ and $\sF_\R \to \sH_\R$,
	since $\alpha_{s,\sF}(W(\xi_1\ot \cdots \xi_n))=\cos(\pi s/2)^n W(P_\sF(\xi_1)\ot \cdots \ot P_\sF(\xi_n))$
	for any $\xi_1\ot \cdots \ot \xi_n\in \sH_\C^{\ot n}$
	and $\alpha_{s,\sF}$ is the identity when restricted to $L_\chi\R$.
It then follows that for any $T\in \B(L^2(p\tilde \cM p))$ and $x\in p\cM p$,
	one has $\|\alpha_{s,\sF}(Tx)-\alpha_{s,\sF}(T)x\|_{\infty,1}\to 0$ as
\[\begin{aligned}
\|\phi_{s, \sF}(Tx)-\phi_{s,\sF}(T)x\|_{\infty,1}=\sup_{a,b\in (p\cM p)_1} \langle \widehat{ \alpha_{s,\sF}( b)}, T\big(x \alpha_{s,\sF}(a)-\alpha_{s,\sF}(xa)\big) \hat 1\rangle\leq \|T\| \|x-\alpha_{s,\sF}(x)\|_2.
\end{aligned}\]
The almost left $p\cM p$-modularity and almost $J p\cM p J$-bimodularity follows similarly. 
\end{proof}

\begin{proof}[Proof of Theorem~\ref{thm: dichotomy}]
We follow Notation~\ref{note 2} and denote by $\X$ the $p\cM p$-boundary piece associated with $pL\chi\R$.

Assume $N$ is not properly proximal relative to $\X^N$,
	then one has a nonzero central projection $z\in \cZ(N)$ and an $Nz$-central state
	$\varphi:\tilde \bS_{\X}(p\cM p)\to \C$ such that
	$\varphi_{\mid z \cM z}=\tau_{z\cM z}$ by the discussion in Section~\ref{sec: biexact and pp}.
	
In the following, we will show $Nz$ is amenable
	and thus we may replace $N$ with $Nz\oplus \C(p-z)$
	so that $\varphi:\tilde \bS_\X(p\cM p)\to \C$ is $N$-central and $\varphi_{\mid p\cM p}=\tau_{p\cM p}$

Taking a limit point of $\phi_{s, \sF}$ from the previous lemma yields a u.c.p.\ map $\phi: \B(L^2(p\tilde \cM p))\to (\B(L^2(p\cM p))^\sharp_J)^*$
	that is $p\cM p$ and $J p\cM pJ$-bimodular
	and $\phi(e_{p\cM p})\in (\K_\X(p\cM p)^\sharp_J)^*$ by Lemma~\ref{lem: bimodular and compact}.
Therefore we have that $\phi$ restricts to 
	$$\phi: \B(L^2(p\tilde \cM p\ominus p\cM p))\cap(J p\cM p J)'\to \tilde \bS_\X(p\cM p),$$
	by the proof of \cite[Proposition 9.1]{DKEP22}.

Put $P=p\cM p$ with tracial state $\tau$ and consider the $P$-$P$ bimodule $\cH=L^2(p\tilde \cM p\ominus p\cM p)$, 
	for which we have $_P \cH_P = {_P} \bar{\cH}_P$.
Composing $\phi$ with $\varphi$ yields an $N$ central state on $\B(\cH)\cap (JPJ)'$
	that restricts to a trace on $P$,
	which in turn gives a sequence of unit vectors $\xi_n\in L^2(JPJ'\cap \B(\cH))=\cH\ot _P \bar \cH$
	that is almost bi-tracial for $P$ and almost central for $N$ \cite[Proposition 2.4]{PoVa14a}.

For any $\varepsilon>0$, any nonzero central projection $e\in \cZ(N)$ and any finite collection of unitaries $u_1,\dots u_d\in \cU(Ne)$, 
	we may find a finite dimensional subrepresentation $\sF_\R\subset \sH_\R$
	such that $\|E_\sF(u_i)-u_i\|_2<\varepsilon/2d$, where $E_\sF: p\cM p\to p(\Gamma_q(\sF_\R, U)''\rtimes\R )p=:Q$
	is the conditional expectation. 

By Proposition~\ref{prop: coarse step lower bound}, there exists a constant $\kappa\in \N$ depending on $\dim(\sF_\R)$ and $q$,
	such that $\cH^{\ot_P^k}$ is a weakly coarse $Q$-$Q$ bimodule for all $k\geq \kappa$.
Note that for any $k>\kappa/2$, one has 
	$\xi_n^k:=\xi_n^{\ot_P^k}\in (\cH\ot_P \bar \cH)^{\ot_P^k}=\cH^{\ot_P ^{2k}}$	.
		
It follows that
\[
\begin{aligned}
\|\sum_{i=1}^d u_i\ot u_i^\op\|_{N\ovt N^\op}\geq \|\sum_{i=1}^d E_\sF( u_i)\ot E_\sF(u_i^\op)\|\geq \|\sum_{i=1}^d E_\sF (u_i)e\xi_n^{k} E_\sF(u_i)^*\|/\|e\xi_n^k\|.
\end{aligned}
\]

Since $\xi_n^{k}$ is almost central for $\{u_1,\dots, u_d\}$ as well as almost bi-tracial for $P$, 
	we have $\|[E_\sF(u_i)^*,\xi_n^{k}]\|<\varepsilon/d$ for $n$ large enough.
Thus we have
$$
\|\sum_{i=1}^d E_\sF (u_i)e\xi_n^{k} E_\sF(u_i)^*\|\geq \|\sum_{i=1}^d E_\sF(u_i) e E_\sF(u_i)^*\xi_n^{k}\|-\varepsilon=\|\sum_{i=1}^d E_\sF(u_i) e E_\sF(u_i)^*\|_2-2\varepsilon\geq d\tau(e)-3\varepsilon.
$$
Since $\|e\xi_n^k\|\to \tau(e)$ as $n\to \infty$ and $\varepsilon>0$ is arbitrary, we have $\|\sum_{i=1}^d u_i\ot u_i^\op\|_{N\ovt N^\op}=d$.
As $e\in \cZ(N)$ is also arbitrary, we conclude that $N$ is amenable by \cite{Ha85}.
\end{proof}

%

We derive from Theorem~\ref{thm: dichotomy} the solidity of $q$-Araki-Woods factors in the almost periodic case.
In the special case that the representation is finite dimensional,	
	the following gives a new proof for solidity
	without relying on the main result of \cite{Ku22} as in \cite{KSW23}.

\begin{thm}\label{thm: solid}
For any almost periodic representation  $U: \R\to \cO(\sH_\R)$ and $q\in (-1,1)$,	
	the $q$-Araki-Woods factor $M=\Gamma_q(\sH_\R, U)''$ is solid, i.e., 
	for any diffuse von Neumann subalgebra $A\subset M$ with expectation,
	one has $A'\cap M$ is amenable.
\end{thm}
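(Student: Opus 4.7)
The plan is to derive solidity from the dichotomy Theorem~\ref{thm: dichotomy}. First, when $q = 0$, $M$ is a free Araki-Woods factor, which is biexact by Lemma~\ref{lem: biexact prelim}(\ref{item: free araki-woods biexact}), and every biexact von Neumann algebra is solid by the general theory of \cite{DP23}. So we may assume $q \in (-1,1)\setminus\{0\}$.

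Let $A \subset M$ be diffuse with $\chi$-preserving expectation and set $B = A'\cap M$; our task is to show $B$ is amenable. Since $B\subset M$ is with expectation, $c_\chi(B) = B\rtimes_{\sigma^\chi}\R \subset c_\chi(M)$, and by amenability of $\R$ together with the standard finite-corner reduction, it suffices to prove that $pc_\chi(B)p \subset pc_\chi(M)p$ is amenable for a finite trace projection $p\in L_\chi\R$ of full central support. Set $N = pc_\chi(B)p$ and let $\X$ denote the $pc_\chi(M)p$-boundary piece associated with $pL_\chi\R$. Applying Theorem~\ref{thm: dichotomy} to $N$, let $z\in \cZ(N)$ be the maximal central projection such that $Nz$ admits no amenable direct summand; the dichotomy then forces $Nz$ to be properly proximal relative to $\X^{Nz}$. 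The goal is to show $z = 0$.

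To derive a contradiction from $z \ne 0$, I would exploit the diffuse commutant $A \subset B' \cap M$. Pick a diffuse abelian subalgebra $A_0 \subset pAp$ and let $\psi$ be any state on $\widetilde\bS_\X(pc_\chi(M)p)$ extending $\Tr_\chi|_{pc_\chi(M)p}$. Averaging $\psi$ along a F{\o}lner net in $\cU(A_0)$ produces a state $\varphi$ that is $A_0$-central and still restricts to the trace on $pc_\chi(M)p$. Combining the commutation $[A_0,B]=0$ in $M$ with the almost periodic decomposition of $\sigma^\chi$, one upgrades $\varphi$ to an $Nz$-central state on $\widetilde\bS_{\X^{Nz}}(Nz)$ extending the trace, contradicting proper proximality of $Nz$. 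The main obstacle is precisely this centrality upgrade: $A_0 \subset M$ commutes with $B$ but not with $L_\chi\R \subset c_\chi(B)$, so promoting $A_0$-centrality to centrality over all of $Nz = p(B\rtimes_{\sigma^\chi}\R)pz$ requires the almost periodicity hypothesis on $U$, which ensures the modular flow has enough discrete structure to permit the combined averaging (and explains why this argument fails in the non-almost-periodic setting).
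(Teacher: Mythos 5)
Your proposal correctly identifies Theorem~\ref{thm: dichotomy} as the engine and the right target (produce a central state on the relative $\bS$-boundary), but the core of the argument is left as a confessed obstacle, and the setup has two issues that the paper's proof is specifically designed to sidestep.

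First, you restrict to a $\chi$-preserving expectation onto $A$, which is not part of the hypothesis. More importantly, even if the expectation onto $A$ is $\chi$-preserving, $A$ does not commute with $L_\chi\R$ inside $c_\chi(M)$ (conjugation by $\lambda_\chi(t)$ implements $\sigma^\chi_{-t}$ on $M$), so ``$pAp$'' for $p\in L_\chi\R$ is not a von Neumann subalgebra in any useful sense, and averaging over $\cU(A_0)$ does not act on $\widetilde\bS_\X(pc_\chi(M)p)$ in a way compatible with the cut-downs. This is precisely why the paper does \emph{not} work with $\chi$: it first picks a diffuse abelian $Z\subset A$ with expectation and builds a new state $\varphi = \varphi_0\circ E_Z^A\circ E_A^M$ whose modular flow globally preserves $A$, $Z$, and $B := (A'\cap M)\vee Z$. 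Because $Z$ is abelian and $\sigma^\varphi$ acts trivially on it, $Z$ commutes with $L_\varphi\R$; and because $Z\subset A$ while $A'\cap M$ commutes with $A$, $Z$ lands in the \emph{center} of $c_\varphi(B)$. This double commutation is the whole point: it makes the $Z$-unitaries simultaneously (i) central for the corner $B_0$ of $c_\varphi(B)$ and (ii) well-defined against $p\in L_\varphi\R$. None of this survives if you insist on $\chi$ and on averaging over $A_0\subset A$ directly.

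Second, the mechanism for producing the central state is different from what you describe. The paper does not run a F{\o}lner average; it takes unitaries $u_n'\in Z$ going to $0$ weakly, conjugates them into the relevant corner of $c_\chi(M)$, and forms $\Phi$ as a point-weak$^*$ limit of $\Ad(u_nq)$. The non-intertwining estimate of Houdayer--Raum (\cite[Prop.\ 5.3]{HoRa15}) shows $\|E_{qL_\chi\R}(xqu_nqy)\|_2\to 0$, which forces $\Phi$ to vanish on $\K_\X(M_0)$; since the $u_n$ commute with $B_0$ and with $JM_0J$, $\Phi$ fixes $B_0\cup JB_0J$, and composing with the Jones projection gives a $B_0$-bimodular map $\Psi:\bS_\Y(B_0)\to B_0$. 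Composing with a trace then yields the forbidden central state. Your sketch stops exactly before this step and flags it as unresolved; it is the heart of the proof, not a technicality.

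Finally, your attribution of the role of almost periodicity is off: in this theorem almost periodicity enters only through Theorem~\ref{thm: dichotomy} (via Proposition~\ref{prop: coarse step lower bound}), not through any ``discrete structure of the modular flow'' needed to upgrade centrality. The construction of $\Psi$ above works for a general orthogonal representation; it is the dichotomy that requires almost periodicity.
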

\begin{proof}
Note that we may assume $q\neq 0$ and $M$ is of type ${\rm III}_1$ by amplifying $(\sH_\R, U)$ since solidity passes to subalgebras with expectation.

Let $Z\subset A$ be a diffuse abelian von Neumann subalgebra with expectation by \cite[Theorem 11.1]{HaSt90}.
Put $E_A^M: M\to A$ and $E_Z^A: A\to Z$ to be the expectations, and $\varphi_0$ a faithful normal state on $Z$.
Thus $\varphi=\varphi_0\circ E_Z^A\circ E_A^M$ is a faithful normal state on $M$
	such that $\sigma^\varphi$ globally preserves $A$ and $Z$, and hence also $B:=(A'\cap M)\vee Z$.

To see $A'\cap M$ is amenable, it suffices to show $pc_\varphi(B)p$ is amenable for any nonzero finite trace projection $p\in L_\varphi\R$.
To this end, first note that $Z$ commutes with $L_\varphi\R$ as $Z$ is abelian and hence $[Z, c_\varphi(B)]=0$.

Take $p\in L_\varphi\R$ an arbitrary nonzero finite trace projection and a sequence of unitaries $\{u_n'\}\subset Z$ that goes to $0$ weakly.

Since $c_\chi(M)$ is a type ${\rm II}_\infty$ factor, we may find a finite trace projection $q\in L_\chi\R$ and 
	a unitary $v\in c_\chi(M)$ such that 
	$\Pi_{\chi,\varphi}(p) =v^*qv$.	
It follows that for any $x\in v \Pi_{\chi,\varphi}(p c_\varphi(B) p) v^*=q v \Pi_{\chi,\varphi}(c_\varphi(B))v^* q=:B_0$,
	one has $[u_n, x]= 0$, where $u_n=v u_n' v^*$.

Set $M_0=q c_\chi(M) q$ and 	
	consider a u.c.p.\ map $\Phi: \B(L^2(M_0))\to \B(L^2(M_0))$ given a point-weak$^*$ limit point of $\{\Ad(u_nq)\}_{n\in \N}$.
It is clear that $\Phi(x)=x$ for any $x\in JM_0J$ and $x\in B_0$
	and $\Phi$ is continuous in $\|\cdot\|_{\infty,1}$.

Moreover, \cite[Proposition 5.3]{HoRa15} shows that for any $x,y\in M_0$ one has
$$
\|E_{qL_\chi\R}(xqu_n qy)\|_2\to 0,
$$
which implies that $\Phi(K)=0$ for any $K\in \K_\X (M_0)$ by the proof of \cite[Lemma 6.12]{DP23},
	where $\X$ denotes the $M_0$-boundary piece associated with $qL_\chi\R$.

Denote by $e: L^2(M_0)\to L^2(B_0)$ the orthogonal projection given by the conditional expectation $E: M_0\to B_0$.
Recall that the induced $B_0$-boundary piece $\Y:=\X^{B_0}$	is  
	$\overline{e(\K_\X(M_0))e}\subset \B(L^2(B_0))$.
As $\{u_nq\}\subset B_0$, one has $\Ad(e)$ commutes with $\Phi$ and hence
$$
\Psi:=\Phi\circ \Ad(e): \B(L^2(B_0))\to \B(L^2(B_0))
$$
satisfies that $\Psi(K)=0$ for any $K\in \K_\Y^{\infty,1}(B_0)$ and $\Psi(x)=x$ for any $x\in B_0\cup JB_0J$.

Therefore, the u.c.p.\ map $\Psi$ restricts to a $B_0$-bimodular map
$$
\Psi: \bS_\Y(B_0)\to B_0.
$$

If $B_0$ were not amenable, then one would have a nonzero central projection $z\in B_0$ such that $zB_0$ has no amenable direct summand.
However, 
$$\tau_z\circ \Psi: z\bS_\Y(B_0)z=\bS_{z\Y z}(zB_0)\to \C$$ 
	is a $zB_0$-central state that restricts to $\tau_z$ on $zB_0$,
	where $\tau_z=\tau(z\cdot)$ and $\tau$ a trace on $B_0$,
	i.e., $zB_0$ is not properly proximal relative to $z\Y z$.
Note that the $B_0z$-boundary piece $z\Y z$ is $\X^{zB_0}$, the $zB_0$-boundary piece induced from $\X$,
	and thus by Theorem~\ref{thm: dichotomy} $zB_0$ has an amenable direct summand, which is a contradiction.
\end{proof}

It was shown in \cite{KSW23} that $\Gamma_q(\sH_\R, U)''$ is full if $2\leq \dim(\sH_\R)<\infty$
	while the fullness in the case that $U:\R\to \cO(\sH_\R)$ is an infinite dimensional almost periodic representation
	was treated in \cite{KW24}.
(See also \cite{HoIs20} for fullness in the weakly mixing case.)
We obtain the following generalization. 
	
\begin{cor}\label{cor: fullness}
For any $U: \R\to \cO(\sH_\R)$ almost periodic representation 
	and $q\in (-1,1)$,
	any nonamenable subfactor $N\subset \Gamma_q(\sH_\R, U)''$ with expectation is full.
\end{cor}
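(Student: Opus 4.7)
The strategy is to run the argument of Theorem~\ref{thm: solid} with a centralizing sequence in place of a diffuse abelian subalgebra, using the dichotomy Theorem~\ref{thm: dichotomy} as the main input and arguing by contradiction.

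Assume toward contradiction that $N$ is not full. By Connes' characterization of fullness for $\sigma$-finite factors, there exist a faithful normal state $\omega$ on $N$ and a uniformly bounded sequence $(x_n) \subset N$ satisfying $\|x_n \omega - \omega x_n\| \to 0$ and $\|[x_n, y] \|^\sharp_\omega \to 0$ for every $y \in N$, yet $x_n - \omega(x_n) \not\to 0$ $*$-strongly. Set $\varphi = \omega \circ E_N^M$, so that $N$ is globally $\sigma^\varphi$-invariant and $c_\varphi(N) \subset c_\varphi(M)$ is an inclusion with $\Tr_\varphi$-preserving expectation. Fix a nonzero finite trace projection $p \in L_\varphi\R$. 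As in the proof of Theorem~\ref{thm: solid}, using that $c_\chi(M)$ is a type $\rm II_\infty$ factor (we may amplify $(\sH_\R,U)$ if necessary so that $M$ is type $\rm III_1$) we may choose a unitary $v \in c_\chi(M)$ and a finite trace projection $q \in L_\chi\R$ with $v\,\Pi_{\chi,\varphi}(p)\,v^* = q$, and put $B_0 := v\,\Pi_{\chi,\varphi}(p c_\varphi(N) p)\,v^* \subset q c_\chi(M) q$.

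Since $N$ is a nonamenable factor, the center of $c_\varphi(N)$ is contained in $L_\varphi\R$ and every amenable direct summand of $pc_\varphi(N)p$ would pull back to a nontrivial amenable direct summand of $N$; hence $B_0$ has no amenable direct summand. Theorem~\ref{thm: dichotomy} applied to $B_0 \subset q c_\chi(M) q$ therefore gives that $B_0$ is properly proximal relative to $\X^{B_0}$, where $\X$ is the $qc_\chi(M)q$-boundary piece associated with $qL_\chi\R q$. On the other hand, the sequence $y_n := v\,\Pi_{\chi,\varphi}(p x_n p)\,v^* \in B_0$ is uniformly bounded and satisfies $[y_n,b] \to 0$ $*$-strongly for every $b \in B_0$, while $y_n$ is not asymptotically contained in $qL_\chi\R q$ (this uses that $(x_n)$ is not asymptotically scalar on $N$ and that $L_\varphi\R \cap N = \C$). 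Taking a point-weak$^*$ limit of $\Ad(y_n)$ exactly as in the proof of Theorem~\ref{thm: solid} produces a u.c.p.\ map $\Psi \colon \widetilde\bS_{\X^{B_0}}(B_0) \to B_0$ that is $B_0$-bimodular, and composing with the trace yields a $B_0$-central state on $\widetilde \bS_{\X^{B_0}}(B_0)$ restricting to the trace on $B_0$. This contradicts proper proximality of $B_0$ relative to $\X^{B_0}$, completing the proof.

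\textbf{Main obstacle.} The delicate point is the transfer step: verifying that a Connes-centralizing sequence $(x_n)$ for the type III factor $N$ produces, once conjugated into the corner $B_0$ of the continuous core via $v\,\Pi_{\chi,\varphi}$, a sequence that is still asymptotically $B_0$-central and still genuinely non-scalar modulo $qL_\chi\R q$, so that the ultrafilter construction at the end of Theorem~\ref{thm: solid} gives a $B_0$-central state whose restriction to $B_0$ is the trace (rather than something trivially factoring through $L_\chi\R$). Once this translation from the Connes criterion on $N$ to the relative picture on $B_0$ is made precise, the dichotomy Theorem~\ref{thm: dichotomy} closes the argument.
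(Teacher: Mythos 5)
Your overall strategy is the same as the paper's: run the argument of Theorem~\ref{thm: solid} with an (asymptotically) centralizing sequence of unitaries in place of the exactly commuting unitaries of $Z$, and invoke Theorem~\ref{thm: dichotomy} at the end. However, there are two places where your sketch would fail as written.

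First, the form of the centralizing sequence matters and your version is too weak. You take a bounded sequence $(x_n)\subset N$ with $x_n-\omega(x_n)\not\to 0$ $*$-strongly. But the proof of Theorem~\ref{thm: solid} needs \emph{unitaries} (otherwise $\Ad(y_nq)$ is not a u.c.p.\ map), and it needs those unitaries to go to $0$ weakly: the crucial step that makes $\Phi$ vanish on the boundary piece $\K_\X(M_0)$ is the intertwining estimate $\|E_{qL_\chi\R}(x\,qu_nq\,y)\|_2\to 0$ from \cite[Proposition 5.3]{HoRa15}, and that estimate uses $u_n\to 0$ weakly, not the weaker ``$y_n$ is not asymptotically contained in $qL_\chi\R q$'' that you assert. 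The paper sidesteps all of this by citing \cite[Corollary 2.6]{HoRa15}, which produces precisely unitaries $u_n\in\cU(N)$ with $u_n\to 0$ weakly, $\|[u_n,\varphi]\|\to 0$, and $[u_n,x]\to 0$ strongly. You should use that form.

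Second, you apply the dichotomy in the opposite direction from the paper, and your supporting claim is the weak link. You assert that because $N$ is a nonamenable factor, $B_0=v\,\Pi_{\chi,\varphi}(pc_\varphi(N)p)\,v^*$ has \emph{no} amenable direct summand, and then conclude $B_0$ is properly proximal. That assertion is not immediate (e.g.\ for $N$ of type ${\rm III}_0$ the center of $c_\varphi(N)$ can be diffuse and one would need an ergodicity-of-the-flow-of-weights argument to rule out partially amenable corners); the sentence ``every amenable direct summand of $pc_\varphi(N)p$ would pull back to a nontrivial amenable direct summand of $N$'' is not a correct statement on its face, since $N$ is a factor and has no nontrivial direct summands. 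The paper avoids this entirely: it shows that $B_0$ \emph{is} amenable, by the same contradiction scheme as in Theorem~\ref{thm: solid} (if $B_0$ were not amenable, pass to a subcorner $zB_0$ with no amenable summand, observe the central state built from $\Psi$ makes $zB_0$ fail proper proximality relative to $z\Y z=\X^{zB_0}$, and the dichotomy then yields an amenable summand of $zB_0$, a contradiction); one then lets $p\to 1$ to get $c_\varphi(N)$ amenable, hence $N$ amenable, contradicting the hypothesis. I would recommend you adopt this cleaner ordering rather than trying to justify your direct nonamenability claim about $B_0$.

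Once these two points are fixed — use the unitaries from \cite[Corollary 2.6]{HoRa15} and run the dichotomy in the paper's direction — your argument becomes the paper's argument.
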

\begin{proof}
The argument is almost identical to the above one so we only sketch the proof.
Note that we may assume $M=\Gamma_q(\sH_\R, U)''$ is of type ${\rm III}_1$. Let $\varphi$ be a faithful normal state on $M$ and 
	$N\subset M$ be a nonamenable subfactor with $\varphi$-preserving expectation.
By \cite[Corollary 2.6]{HoRa15}, there exists a sequence of unitaries $u_n\in \cU(N)$ such that $u_n\to 0$ weakly,
	$\|[u_n,\varphi]\|\to 0$ and $[u_n,x]\to 0$ strongly for any $x\in N$.
One may then proceed exactly as in the above proof (as one only needs asymptotic commutation for $u_n$)
	and conclude $N$ is amenable.
\end{proof}

We also obtain the strong solidity of almost periodic $q$-Araki-Woods factors.
To this end, we follow the exact same argument of \cite{BHV18} and apply Theorem~\ref{thm: dichotomy} at the end to conclude the proof.

\begin{thm}\label{thm: strong solid}
For any almost periodic representation $U: \R\to \cO(\sH_\R)$  and $q\in (-1,1)$,
	the $q$-Araki-Woods factor $\Gamma_q(\sH_\R, U)''$ is strongly solid.
\end{thm}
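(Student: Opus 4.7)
The plan is to follow the blueprint of Boutonnet-Houdayer-Vaes \cite{BHV18} for strong solidity of free Araki-Woods factors, with Theorem~\ref{thm: dichotomy} replacing the biexactness-based arguments available in the $q=0$ setting. Fix a diffuse amenable von Neumann subalgebra $A \subset M := \Gamma_q(\sH_\R, U)''$ with expectation and set $P := \cN_M(A)''$; the goal is to show $P$ is amenable. Since amenability passes through the continuous core and can be checked on finite corners, it suffices to show that $p c_\chi(P) p$ is amenable for every nonzero finite trace projection $p \in L_\chi \R$. Here one chooses a faithful normal state $\varphi$ on $M$ compatible with $A$ (so that $\sigma^\varphi$ preserves $A$), ensuring that $c_\varphi(P) \subset c_\varphi(M)$, and then identifies $c_\varphi(M)$ with $c_\chi(M)$ via $\Pi_{\chi,\varphi}$ as in the proof of Theorem~\ref{thm: solid}.

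Next, I would invoke the complete metric approximation property of $\Gamma_q(\sH_\R, U)''$ established in \cite{ABW18}, together with the general weak compactness machinery developed in \cite{BHV18}. This produces a state on a suitable crossed-product algebra that is simultaneously invariant under the conjugation action of $\cN_M(A)$ and tracial on $A$, descending in the continuous core to an $\cN_M(A)$-central and $L_\chi\R$-central positive functional with the appropriate normalization on a corner of $c_\chi(M)$.

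Now apply Theorem~\ref{thm: dichotomy} to $Q := p c_\chi(P) p$, viewed as a subalgebra of $p c_\chi(M) p$, with $\X$ the boundary piece associated with $p L_\chi \R$. Either $Q$ has an amenable direct summand, or $Q$ is properly proximal relative to $\X^Q$. If the latter holds, then no nonzero central projection $z \in \cZ(Q)$ admits a $Qz$-central state on $\widetilde\bS_\X(p c_\chi(M) p)$ restricting to the trace on $Qz$. However, the weak compactness datum from the previous step, suitably massaged through the u.c.p.\ map $\widetilde E : \widetilde\bS_\X(p c_\chi(M) p) \to \widetilde\bS_{\X^Q}(Q)$, yields exactly such a central state on every corner. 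This contradiction forces $Q$ to have an amenable direct summand, and since $p$ and $z$ were arbitrary, $c_\chi(P)$ itself is amenable, hence so is $P$.

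The main obstacle will be the bookkeeping required to translate the non-tracial weak compactness output of \cite{BHV18} into the tracial language of proper proximality used in Theorem~\ref{thm: dichotomy}. Concretely, one has to verify that the invariant mean produced from CMAP on the conjugation action of $\cN_M(A)$ on $A$ not only descends to $c_\chi(M)$ but takes values in the specific operator system $\widetilde\bS_\X(p c_\chi(M) p)$ associated with the boundary piece $pL_\chi\R$, and that its $Q$-centrality and trace-compatibility survive the descent. This is essentially the content of the BHV18 argument combined with the identifications set up in Section~\ref{sec: prelim}, but careful attention must be paid to ensure compatibility with $\sigma^\varphi$ when $A$ is not globally preserved by $\sigma^\chi$.
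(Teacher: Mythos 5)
Your outline matches the paper's strategy at the coarse level (CMAP from \cite{ABW18} $+$ weak compactness $+$ the dichotomy of Theorem~\ref{thm: dichotomy}), but there is a genuine gap in the step where you claim the weak compactness datum produces a $Q$-central state on $\widetilde\bS_\X(pc_\chi(M)p)$ restricting to the trace. The operator system $\widetilde\bS_\X$ is defined relative to the boundary piece $\X$ associated with $pL_\chi\R$, so to obtain a state on it you must first show that the $\cN$-invariant, $A$-central positive functional coming from the weak compactness vectors $\xi_n\in L^2(A\ovt A^\op)$ \emph{vanishes on} $\K^{\infty,1}_\X$. This does not come for free: it requires the non-intertwining condition $pQ_0 p\not\prec_{M_0} L_\chi\R$, which allows one (via the averaging argument in the proof of \cite[Theorem~6.11]{DKEP22}) to find unitaries in $A$ under which compacts relative to $\X$ decay in $\|\cdot\|_{\infty,1}$. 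Without first reducing to this non-intertwining situation the weak compactness state could concentrate on the boundary piece and give no contradiction with proper proximality. In the paper this reduction (together with $Q'\cap M=\cZ(Q)$, which uses solidity from Theorem~\ref{thm: solid}, and the assumption that $\cN_{c_\psi(M)}(c_\psi(Q))''$ has no amenable direct summand) is imported from the structure of the argument in \cite[Proof of the main theorem]{BHV18} before the dichotomy is ever invoked; it is then packaged, together with \cite[Proposition~3.6]{BHV18}, in a separate lemma asserting that if $A\not\prec_M Q$ and $M$ has CMAP then the \emph{stable} normalizer $s\cN_M(A)''$ is not properly proximal relative to the boundary piece of $Q$. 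Your proposal never states or uses this hypothesis, which is where the argument would break down.

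A secondary issue: you apply Theorem~\ref{thm: dichotomy} directly to $pc_\chi(P)p$, but the normalizer does not behave well under compression; the weak compactness vectors only see the stable normalizer $s\cN_{pc_\chi(M)p}(pc_\chi(Q)p)''$, which contains $p\,\Pi_{\chi,\psi}(c_\psi(P))\,p$ but need not equal it. The clean way to run the contradiction is to prove that the stable normalizer $P_1$ is not properly proximal relative to $\X^{P_1}$ and then apply Theorem~\ref{thm: dichotomy} to $P_1$; the amenable direct summand of $P_1$ then contradicts the reduction that $\cN_{c_\psi(M)}(c_\psi(Q))''$ (and hence its image under $\Pi_{\chi,\psi}$) has no amenable direct summand.
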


\begin{proof}
We may assume $q\neq 0$ and $M=\Gamma_q(\sH_\R, U)''$ is of type ${\rm III}_1$ 
	as strongly solidity passes to von Neumann subalgebras with expectations.

Let $Q\subset M$ be a diffuse amenable von Neumann subalgebra with expectation $E: M\to Q$ and $\psi$ a faithful normal state on $M$ with $\psi\circ E=\psi$.
Set $P:=\cN_M(Q)''$ and assume by contradiction that $P$ is nonamenable.

Arguments in \cite[Proof of the main theorem]{BHV18} shows that we may assume $Q'\cap M=\cZ(Q)$ as $M$ is solid by Theorem~\ref{thm: solid},
	 $\cN_{c_\psi(M)}(c_\psi(Q))''$ has no amenable direct summand
	 and $pQ_0p\not\prec_{M_0} L_\chi\R$ for some finite trace projection $p\in L_\chi\R$,
	 where $M_0=c_{\chi}(M)$, $Q_0=\Pi_{\chi,\psi}(c_\psi(Q))$, $P_0=\Pi_{\chi,\psi}(\cN_{c_\psi(M)}(c_\psi(Q))'')=\cN_{M_0}(Q_0)''$.

Let $M_1= p M_0 p$, $Q_1=p Q_0 p$ and $P_1=s\cN_{M_1}(Q_1)''$, the stable normalizer of $Q_1$ in $M_1$ in the sense of
	\cite[Definition 3.1]{BHV18}, which contains $p P_0 p$.
The following lemma together with \cite[Proposition 5.3]{HoRa15}, \cite{ABW18} and \cite{AD95} implies that $P_1$ is not properly proximal relative to $\X^{P_1}$,
	where $\X$ is the $M_1$-boundary piece associated with $pL_\chi\R$,
	and thus by Theorem~\ref{thm: dichotomy} that $P_1$, and hence $P_0$, must have an amenable direct summand,
	which is a contradiction.
\end{proof}

The following is a consequence of \cite[Proposition 3.6]{BHV18} and generalizes \cite[Theorem 6.11]{DKEP22}.

\begin{lem}
Let $(M,\tau)$ be a tracial von Neumann algebra with CMAP, $A\subset M$ an amenable von Neumann subalgebra,
	$Q\subset M$ a von Neumann subalgebra. Denote by $\X$ the $M$-boundary piece associated with the subalgebra $Q$.
	
If $A\not\prec_{M} Q$, then the von Neumann subalgebra $P$ generated by  $s\cN_M(A)=\{x\in M\mid xAx^*\subset A,\ x^* A x\subset A\}$
	is not properly proximal relative to the induced $P$-boundary piece $\X^P$.
\end{lem}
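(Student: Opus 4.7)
The plan is to construct a $P$-central state $\varphi$ on $\widetilde\bS_{\X^P}(P)$ with $\varphi|_P = \tau$, which by the discussion at the end of Section~\ref{sec: biexact and pp} witnesses the failure of proper proximality of $P$ relative to $\X^P$. Using the u.c.p.\ expectation $\widetilde E : \widetilde\bS_\X(M) \to \widetilde\bS_{\X^P}(P)$ recalled there, it is enough to build a $P$-central state on $\widetilde\bS_\X(M)$ that restricts to $\tau$ on $P$ and then push it forward via $\widetilde E$.

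The heart of the argument is a weak-compactness construction in the spirit of Ozawa--Popa \cite{OzPo10a}. Since $M$ has CMAP and $A \subset M$ is amenable, the proof of \cite[Proposition 3.6]{BHV18} produces a sequence of unit Hilbert--Schmidt vectors $\xi_n \in L^2 M \otimes_2 \overline{L^2 M}$ that are asymptotically $M$-bi-tracial and asymptotically central under the diagonal conjugation by the \emph{stable} normalizer $s\cN_M(A)$, hence by every element of $P = (s\cN_M(A))''$. Any weak-$*$ limit point $\varphi$ of the states $T \mapsto \langle T \xi_n, \xi_n \rangle$ on $\B(L^2 M)$, interpreted in $(\B(L^2 M)^\sharp_J)^*$, is then automatically $P$-central and satisfies $\varphi|_P = \tau$.

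It remains to verify that $\varphi$ annihilates $\K_\X(M)^\sharp_J$, so that it descends to the required state on $\widetilde\bS_\X(M)$. Since $\X$ is the hereditary C*-subalgebra generated by $\{xJyJ e_Q \mid x,y \in M\}$, standard density and continuity reductions reduce the problem to showing $\langle R\, xJyJ e_Q \xi_n, \xi_n \rangle \to 0$ for arbitrary $R, x, y \in M$. Viewing $\xi_n$ as a Hilbert--Schmidt operator, this inner product rearranges into a trace expression involving $e_Q$ against an $A$-conjugate of $\xi_n$; the hypothesis $A \not\prec_M Q$ together with Popa's intertwining theorem then forces the limit to vanish along a suitable net of unitaries in $A$ witnessing the non-intertwining.

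The main obstacle is upgrading from $\cN_M(A)$-central vectors, which the Ozawa--Popa machine naturally produces, to $s\cN_M(A)$-central vectors, which are needed to cover all of $P$; this is exactly the content of \cite[Proposition 3.6]{BHV18}, and its invocation is the essential technical step. The remaining boundary-piece bookkeeping --- annihilation of $\K_\X(M)^\sharp_J$, descent to $\widetilde\bS_\X(M)$, and push-forward via $\widetilde E$ --- is then routine given the framework of Section~\ref{sec: biexact and pp}.
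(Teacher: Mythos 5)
The proposal has a genuine gap at its central step. You assert that the vectors $\xi_n$ produced by \cite[Proposition 3.6]{BHV18} are ``asymptotically central under the diagonal conjugation by the stable normalizer $s\cN_M(A)$, hence by every element of $P$,'' so that any weak-$*$ limit point is \emph{automatically} $P$-central on all of $\B(L^2M)$. This is not what \cite[Proposition 3.6]{BHV18} provides, and in general it is false: the vectors lie in $L^2(A\ovt A^\op)$ and are asymptotically $A$-central and $M$-bitracial, but for a partial isometry $v\in s\cN_M^0(A)$ one only gets that $(v\ot 1)\xi_n$ is approximated (up to $\varepsilon$) by $(1\ot v^\op)JS(v,\varepsilon)J\xi_n$ for some $S(v,\varepsilon)\in M\odot M^\op$. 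Conjugation by $v$ is therefore matched only modulo a correcting factor $JS(v,\varepsilon)J$, not by $v$ itself, so $\langle \xi_n,(vTv^*\ot 1)\xi_n\rangle$ does not converge to $\langle\xi_n,(T\ot 1)\xi_n\rangle$ for general $T\in\B(L^2M)$. This is precisely the feature that distinguishes the stable normalizer from the normalizer, and the reason \cite[Proposition 3.6]{BHV18} has the more delicate form it does.

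What actually rescues the argument --- and is the substantive content of the paper's proof --- is that the error term involves the commutator $[T\ot 1,\ JE(S(v,\varepsilon))J]$, which does \emph{not} vanish for arbitrary $T$, but \emph{does} land inside (a min-tensor amplification of) $\K^{\infty,1}_{\X^P}(P)$ precisely when $T\in\bS_{\X^P}(P)$. Combined with the fact that $\varphi$ annihilates $\K^{\infty,1}_\X(M)$ (this is where $A\not\prec_M Q$ is used, via averaging over unitaries in $A$ as in \cite[Theorem 6.11]{DKEP22}), the commutator contribution drops out in the limit and one obtains $\varphi(vT)=\varphi(Tv)$ \emph{only} for $T\in\bS_{\X^P}(P)$. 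In other words, the boundary-piece annihilation is needed twice --- once to descend to the boundary algebra, and once, intertwined with the $\bS_{\X^P}$ membership of $T$, to establish $P$-centrality at all --- whereas your write-up treats the $P$-centrality as free and uses the non-intertwining hypothesis only for the first purpose. To repair the proposal you would need to replace the claim of automatic $P$-centrality by the computation in the paper showing $\varphi(vT)=\varphi(Tv)$ on $\bS_{\X^P}(P)$, which explicitly tracks the $\delta_{n,\varepsilon}$, $\bar\delta_{n,\varepsilon}$ terms and the compact commutator.
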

\begin{proof}
Set $\varphi:\B(L^2M)\to \C$ to a the weak$^*$ limit of $\B(L^2M)\ni T\mapsto \langle \xi_n, (T\ot 1)\xi_n\rangle$,
	where $\{\xi_n\}\in L^2(A\ovt A^\op)$ is a net of  positive vectors given by \cite[Proposition 3.6]{BHV18}.
It is clear that $\varphi$ is $A$-central and restricts to the canonical traces on $M$ and $JMJ$.
Moreover, since $A\not\prec_{M} Q$, we may find a sequence of unitaries $u_n\in \cU(A)$ such that for any $K\in \K_\X^{\infty,1}(L^2M)$,
	we have $\|1/N\sum_{n=1}^N u_n^* Ku_n\|_{\infty,1}\to 0$ as $N\to\infty$ by the proof of \cite[Theorem 6.11]{DKEP22}.
It then follows that $\varphi_{\mid \K^{\infty,1}_\X(M)}=0$.
Viewing $\B(L^2P)=e_P \B(L^2M) e_P\subset \B(L^2M)$, we further have $\varphi$ vanishes on $\K_{\X^P}^{\infty,1}(P)$
	as $(e_P\ot \id)\xi_n=\xi_n$ and $\X^P=\overline{e_P(\K_\X(M))e_P}$.

We claim that $\varphi(vT)=\varphi(Tv)$ for any $T\in \bS_{\X^P}(P)\subset e_P\B(L^2M)e_P$ and partial isometry $v\in s\cN_M^0(A)$.
Indeed, for any $\varepsilon>0$ there exists $S(v,\varepsilon)\in M\odot M^\op$
	such that $\limsup_{n\to \infty}\|\delta_{n,\varepsilon}\|<\varepsilon$
		and  $\limsup_n \|\bar\delta_{n,\varepsilon}\|<\varepsilon$
	by \cite[Proposition 3.6]{BHV18},
	where  $\delta_{n,\varepsilon}=(v\ot 1)\xi_n-(1\ot v^\op) JS(v,\varepsilon)J\xi_n $
	and $\bar\delta_{n,\varepsilon}=(v^*\ot 1)\xi_n-(1\ot \bar v) JS(v,\varepsilon)^*J \xi_n$.

We set $e: L^2M\ot L^2M^\op \to L^2P\ot L^2P^{\op}$ and $E: M\ovt M^{\op}\to P\ovt P^\op$ compute
\[\begin{aligned} 
&\varphi(Tv)=\lim_{n} \langle \xi_n, (Tv\ot 1)\xi_n\rangle
	=\lim_{n}\Big(\langle \xi_n, (T\ot 1)(1\ot v^\op) JE(S(v,\varepsilon))J \xi_n\rangle+ \langle \xi_n, (T\ot 1)\delta_{n,\varepsilon}\rangle\Big)\\
=& \lim_{n} \Big( \langle (1\ot \bar v)JE(S(v,\varepsilon)^*)J \xi_n,(T\ot 1)\xi_n\rangle+\langle (1\ot \bar v)\xi_n, [T\ot 1, JE(S(v,\varepsilon))J]\xi_n\rangle
	+ \langle \xi_n, (T\ot 1)\delta_{n,\varepsilon}\rangle)\Big)\\
=& \lim_{n} \Big(\langle\xi_n, (vT\ot 1)\xi_n\rangle+ \langle e(\bar\delta_{n,\varepsilon}), (T\ot 1)\xi_n\rangle+
	\langle (1\ot \bar v)\xi_n, [T\ot 1, JE(S(v,\varepsilon))J]\xi_n\rangle
	+ \langle \xi_n, (T\ot 1)\delta_{n,\varepsilon}\rangle)\Big)\\
=& \varphi(vT) +\lim_n \Big( \langle e(\bar\delta_{n,\varepsilon}), (T\ot 1)\xi_n\rangle+ \langle \xi_n, (1\ot v^\op) [T\ot 1, JE(S(v,\varepsilon))J]\xi_n\rangle
	+ \langle \xi_n, (T\ot 1)\delta_{n,\varepsilon}\rangle)\Big).
\end{aligned}\]
Note that $(1\ot v^\op)[T\ot 1, JE(S(v,\varepsilon))J]=\sum_{i=1}^d K_i\ot S_i$
	where $S_i\in {\rm C}^*(P, P^\op)$ and $K_i\in \K^{\infty,1}_{\X^P}(P)$,
	since $T\in \bS_{\X^P}(P)$ and $JE(S(v,\varepsilon))J\in J(P\odot P^\op )J$.
As $\lim_{n}\langle \xi_n, (K\ot 1)\xi_n\rangle=\varphi(K)=0$
	for any $K\in \K_{\X^P}(P)_+$,
	we have $\lim_n\langle\xi_n, S\xi_n\rangle=0$ for any $S\in \K_{\X^P}(P)\otimes_{\rm min} \B(L^2P^\op)$.
Using the fact that $\|a\xi_n\|\to \|a\|_2$ for $a\in M$ or $JMJ$,
	we conclude that $\lim_n\langle \xi_n, (1\ot v^\op) [T\ot 1, JE(S(v,\varepsilon))J]\xi_n\rangle=0$.
The claim then follows as $\varepsilon>0$ is arbitrary.

Lastly, as partial isometries in $s\cN_M^0 (A)$ generates $P$ and $\varphi_{\mid P}=\tau$,
	we conclude that $\varphi:\bS_{\X^N}(P)\to \C$ is a $P$-central state with $\varphi_{\mid P}=\tau$.
\end{proof}

\section{Non-isomorphism results}\label{sec: iso}

\subsection{Non-biexactness for $q$-Araki-Woods factors}

In this section we exploit the idea of \cite{BCKW23} and demonstrate non-biexactness for $q$-Araki-Woods factors 
	whose associated representations satisfy certain infinite dimensional condition,
	which in turn leads to non-isomorphism results with free Araki-Woods factors as well as $q$-Araki-Woods factors
	with finitely many generators.
These results are inspired by \cite{Cas23} although our approach is different, 
	as in \cite{Cas23} the notion of ${\rm W}^*$AO was used to distinguish $q$-Gaussian with infinite generators from free group factors,
	while in the current non-tracial setting the notion of biexactness is used.
See \cite[Section 7.3]{DP23} for the connections between these two notions.

The following is based on norm estimates of elements in the min-tensor products of $q$-Araki-Woods algebras from \cite{Nou04, Hia03}.
\begin{lem}\label{lem: non-AO lower bound}
Let $q\in (-1,1)\setminus\{0\}$ and $U:\R\to \cO(\sH_\R)$ a representation.
For any almost periodic subrepresentation $\sK_\R\subset \sH_\R$, denote by $\lambda_{\sK}$ the supremum of eigenvalues of the generator of 
	$U:\R\to \cO(\sK_\R)$.
Set $M=\Gamma_q(\sH_\R, U)''$ with $\chi$ its $q$-quasi free state.
Then the following are true.
\begin{enumerate}
	\item If there exists a finite dimensional almost periodic subrepresentation $\sK_\R$ of $\sH_\R$ such that $2 q^4 \dim(\sK_\R)>\lambda_\sK$,
	then there exist a vector $\xi\in \sH^{\ot 2}$ invariant under $\{\cF_q(U_t)\}_{t\in\R}$ with $\|\xi\|_U=1$, 
		a finite collection of elements $\{x_i\}_{i=1}^d\subset M$
			and $\delta>0$
			such that $|\langle \xi, \sum_{i=1}^d x_i Jx_iJ \xi\rangle|>(1+\delta)\|\sum_{i=1}^d x_i \ot (x_i^*)^{\op}\|$. \label{item: ap 1}
	\item  If there exists a finite dimensional almost periodic subrepresentation $\sK_\R$ of $\sH_\R$ such that $q^8 \dim(\sK_\R)>\lambda_\sK^4$,
		then we may assume $\{x_i\}_{i=1}^d$ from (\ref{item: ap 1}) lies in the centralizer $M^\chi$.\label{item: ap 2}
	\item If the weakly mixing part of $\sH_\R$ is nontrivial, then the same conclusion of (1) holds.\label{item: wm}
\end{enumerate}
\end{lem}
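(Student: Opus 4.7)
The plan is an Akemann--Ostrand--type comparison in the spirit of \cite{BCKW22}: exhibit explicit data $(\xi,\{x_i\})$ for which the lower bound on the matrix coefficient $|\langle\xi,\sum_i x_iJx_iJ\xi\rangle|$, computed via the $q$-Wick formula, strictly dominates the upper bound on $\|\sum_i x_i\otimes(x_i^*)^{\op}\|_{\min}$ coming from Haagerup-type norm estimates for $q$-creation operators \cite{Nou04,Hia03}.

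For (\ref{item: ap 1}), I would first diagonalize $U$ on $\sK_\C$ to produce an $\langle\cdot,\cdot\rangle$-orthonormal eigenbasis $\{e_i\}_{i=1}^n$ with $Ae_i=\lambda_i e_i$, $0<\lambda_i\leq \lambda_\sK$, together with its spectral dual $\{\tilde e_i\}\subset \sK_\C$ satisfying $A\tilde e_i=\lambda_i^{-1}\tilde e_i$ (available because $U$ is orthogonal on $\sK_\R$, so the spectrum is symmetric under $\lambda\mapsto\lambda^{-1}$). A natural $\{\cF_q(U_t)\}$-invariant candidate is
\[
\xi=\sum_i \alpha_i\, e_i\otimes \tilde e_i\in \sH^{\otimes 2},
\]
with weights $\alpha_i$ chosen so that $\|\xi\|_U=1$. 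For the operators I would take $x_i=W(e_i)\in M$. The upper bound $\|\sum_i W(e_i)\otimes W(e_i)^{\op}\|_{\min}\leq C_q\sqrt{n}$ follows from Nou's Haagerup-type inequality \cite{Nou04} for $q$-creation operators, transferred to the $U$-deformed inner product by absorbing a factor depending on $\lambda_\sK$ into the constant. The matching lower bound comes from direct evaluation of $\langle\xi,W(e_i)JW(e_i)J\xi\rangle$ via the $q$-Wick formula: the diagonal contributions are of order $q^4\lambda_i^{-1}$, summing to order $q^4 n/\lambda_\sK$, and the hypothesis $2q^4 n>\lambda_\sK$ then yields the required strict separation.

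For (\ref{item: ap 2}), the obstruction is that $W(e_i)\notin M^\chi$, since $\sigma^\chi_t(W(e_i))=\lambda_i^{it}W(e_i)$. The natural remedy is to replace $W(e_i)$ by a modular-weight-zero Wick word, such as $x_i=W(e_i\otimes \tilde e_i)$, whose total modular weight is $\lambda_i\cdot\lambda_i^{-1}=1$. Running the same comparison with these paired Wick words costs one extra factor of $\lambda_\sK$ in each estimate, which accounts for the strengthened hypothesis $q^8 n>\lambda_\sK^4$. For (\ref{item: wm}), diagonalization is unavailable, but weak mixing plays its role: for $\eta$ in the weakly mixing part of $U$ and any $\eps>0$, there exist $n\in\N$ and $t_1,\dots,t_n\in\R$ with $|\langle U_{t_i}\eta,U_{t_j}\eta\rangle_U|<\eps$ for $i\neq j$. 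Setting $x_i=\sigma^\chi_{t_i}(W(\eta))=W(U_{t_i}\eta)$ and taking $\xi$ to be an $\R$-ergodic average of $n^{-1/2}\sum_i U_{t_i}\eta\otimes U_{-t_i}\eta$, the same comparison closes: the upper bound remains $O(\sqrt{n})$ via Nou's estimate applied to the asymptotically orthonormal family, while the diagonal action grows linearly in $n$.

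The main obstacle I anticipate is the careful transfer of Nou's $q$-creation norm estimates from the standard $q$-Fock space to the $U$-deformed setting, tracking the precise dependence of all constants on $q$ and on the spectrum of $U$ restricted to $\sK_\R$; this dependence is exactly what drives the quantitative hypotheses $2q^4\dim(\sK_\R)>\lambda_\sK$ and $q^8\dim(\sK_\R)>\lambda_\sK^4$. In (\ref{item: ap 2}), the additional bookkeeping required to keep the paired Wick words inside $M^\chi$ while preserving the strict separation looks like the most delicate sub-step.
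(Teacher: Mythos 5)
Your overall strategy — compare a Wick-word matrix coefficient against a Haagerup/Nou min-norm bound — is the right one, and it is also what the paper does. However, there is a genuine gap in the mechanism, and it is precisely the step that makes the stated quantitative hypothesis sufficient.

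You take $x_i=W(e_i)$ to be length-$1$ Wick words and your test vector $\xi=\sum_i\alpha_i\,e_i\otimes\tilde e_i$ lies \emph{inside} $\sK^{\ot 2}$. The paper instead takes length-$k$ Wick words: for each $k$, the collection is $\{W(\zeta)\mid \zeta\in E_{k,d}\}$, where $E_{k,d}$ is the full set of normalized simple eigentensors of length $k$ with legs drawn from the first $d$ eigenpairs, and the test vector is the single eigentensor $e_1^m\otimes e_2^m$ with $m>d$, i.e.\ sitting in an eigenspace orthogonal to $\sK$. Two things then happen that your proposal does not reproduce. First, because the test legs live outside $\sK$, the Wick pairing with each $W(\zeta)JW(\zeta)J$ has no cross-pairings between $\zeta$ and the test legs, and the coefficient factorizes cleanly as $q^{2k}\langle A^{-1/2}\zeta,\zeta\rangle$, giving a lower bound $\big(\sum_{n\le d}(\lambda_n^{1/2}+\lambda_n^{-1/2})\big)^{k}\ge(2d q^2)^k/q^{2k}\cdot q^{2k}$; your in-block test vector will pick up extra pairings that you have not controlled. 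Second, and more seriously, the comparison in the paper is made to \emph{diverge} by letting $k\to\infty$: Nou's estimate gives an upper bound of the form $C_{|q|}(k+1)^2(2d)^{k/2}T^k$, and the ratio of the lower to the upper bound behaves like $(2dq^4/T^2)^{k/2}$ up to a polynomial factor, so a fixed multiplicative advantage turns into strict separation for $k$ large. Your length-$1$ version gives, at best, a one-shot comparison of order $q^4 n/\lambda_\sK$ against $C\sqrt{n}$, i.e.\ it needs $q^4\sqrt{n}\gtrsim\lambda_\sK$ (after unwinding the $\lambda$-dependence you swept into the constant) — a condition not implied by $2q^4\dim(\sK_\R)>\lambda_\sK$. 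Without the length-$k$ amplification there is no way to close the estimate under the stated hypothesis.

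For part (2) the paper stays with length-$k$ words but restricts to the subset $F_{k,d}\subset E_{k,d}$ of total modular weight $1$ (cardinality $>d^{k/2}$), again relying on $k\to\infty$; your proposal of length-$2$ paired words $W(e_i\otimes\tilde e_i)$ is in the same spirit but again lacks the amplification. For part (3) the paper does not use an averaged test vector from a weakly mixing orbit: it chooses spectral windows $K_{\pm n}$ near $a^{\pm 1}$, picks one vector in each, and reads the lower and upper bounds off the proof of \cite[Theorem 2.3]{Hia03}; note the weakly mixing part carries no nonzero $\cF_q(U_t)$-invariant $2$-tensor, so your ``$\R$-ergodic average'' test vector does not exist — the paper's test vector here is simply a unit vector $e_m\in K_m$ and it is not invariant (the invariance claim in the statement is only used in the almost periodic parts). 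In summary: the idea is right, but the missing ingredient is the length-$k$ Wick word amplification (and a test vector chosen orthogonal to the word legs), which is essential both for the computation to factorize and for the quantitative hypothesis to suffice.
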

\begin{proof}
(\ref{item: ap 1})
Set $(\sH_\R^{ap}, U_{\mid H_\R^{ap}})=\oplus_{n=1}^N (\sH_\R^n, U_n)$ to be the almost periodic part of $(\sH_\R, U)$,
	where $N\in \N\cup \{\infty\}$, 
	$\sH_\R^n=\R^2$ and the eigenvalues of the generator of $U_n$ are $\lambda_n$ and $\lambda_n^{-1}$, where $\lambda_n\geq 1$.
Let $e_1^n, e_2^n\in \sH_n=\sH_\R^n+i \sH_\R^n$ be unit eigenvectors corresponding to $\lambda_n$ and $\lambda_n^{-1}$, respectively.
For any $d<N$ and $k\in \N$, set
$$
E_{k,d}=\{\xi/\|\xi\|_U \mid \xi=\xi_1\ot \cdots \xi_k,\ {\rm where\ each\ } \xi_i\in \cup_{n=1}^d\{e_1^n, e_2^n\}\},
$$
i.e., $E_{k,d}$ is the set of unit eigenvectors of $\cF_q(A)$ restricted on $(\oplus_{n=1}^d \sH_n)^{\ot k}$, where $A$ is the generator of $U$.

Observe that for any $m>d$ and $\xi\in E_{k,d}$, one has
\[\begin{aligned}
&\langle e_1^m\ot e_2^m, W(\xi) JW(\xi)J e_1^m\ot e_2^m\rangle=\langle JW(S\xi)Je_1^m\ot e_2^m, \xi\ot e_1^m\ot e_2^m\rangle\\
	=&\langle e_1^m\ot e_2^m\ot A^{-1/2}\xi, \xi\ot e_1^m\ot e_2^m\rangle
	=q^{2k}\langle A^{-1/2}\xi, \xi\rangle.
\end{aligned}\]

It follows that
$$
\langle e_1^m\ot e_2^m, \sum_{\xi\in E_{k,d}} W(\xi) JW(\xi)J e_1^m \ot e_2^m\rangle=q^{2k} (\sum_{n=1}^d (\lambda_n^{1/2}+\lambda_n^{-1/2}))^k\geq (2dq^2)^k.
$$
On the other hand, by \cite{Nou04} we have
$$\|\sum_{\xi\in E_{k,d}} W(\xi)\ot JW(\xi)J\|\leq C_{|q|}(k+1)^2 (2d)^{k/2} T^k,$$
where $T=\max\{\lambda_1,\cdots, \lambda_d\}$.
Therefore, if $2q^4d/T^2>1$, then we may find $\delta>0$ and sufficiently large $k\in \N$
	such that the desired conclusion holds, with $\{x_i\}=\{W(\xi)\mid \xi\in E_{k,d}\}$
	and $\xi=e_1^m$ for any $m>d$.
	
(\ref{item: ap 2}) The argument is exactly the same as in (\ref{item: ap 1}), by replacing the set $E_{k,d}$ with the subset $F_{k,d}\subset E_{k,d}$ that consists of eigenvectors with eigenvalue $1$ (if $k$ is even).
Notice that $|F_{k,d}|>d^{k/2}$.
Thus the same proof of (1) shows that for any $m>d$, one has
$$
\langle e_1^m\ot e_2^m, \sum_{\xi\in F_{k,d}} W(\xi) JW(\xi)J (e_1^m\ot e_2^m)\rangle=q^{2k} |F_{k,d}|,
$$
and
$$
\|\sum_{\xi\in F_{k,d}} W(\xi)\ot JW(\xi)J\|\leq C_{|q|} (k+1)^2 T^k |F_{k,d}|^{1/2}.
$$
It follows that if $q^2 d^{1/4}/T>1$, then the conclusion holds.

(\ref{item: wm}) We may assume $U: \R\to \cO(\sH_\R)$ is weakly mixing. 
Denote by $A$ the generator of $U$ and we may find $1<a<b<\infty$ such that $[a,b]\subset {\rm sp}(A)$.
Take $0<\delta<\min\{b-a, 1\}$ and set
$$K_n=E_A\big([a+\frac{\delta}{n+1}, a+\frac{\delta}{n})\big)H_\C,\ \ 
	K_{-n}=E_A \big( ( ( a+\frac{\delta}{n})^{-1}, (a+\frac{\delta}{n+1})^{-1}]\big)H_\C.$$

For $m>n$, take vectors $e_m\in K_m$, $e_n\in K_n$ and compute
$$
\langle e_m, W(e_n) JW(e_n)J e_m\rangle=\langle e_m\ot A^{-1/2} e_n, e_n\ot e_m\rangle=q\|e_m\|^2\langle \Omega, W(e_n) JW(e_n) J\Omega\rangle,
$$
as $\langle e_n, e_m\rangle=\langle A^{-1/2} e_n, e_m\rangle=0$.
Similarly, for any $e_{-n}\in K_{-n}$ one has
$$\langle e_m, W(e_{-n}) JW(e_{-n})J e_m\rangle=q\|e_m\|^2\langle \Omega, W(e_{-n})JW(e_{-n})J\Omega\rangle.$$

Therefore, by the proof of \cite[Theorem 2.3]{Hia03}, we may find $e_n\in K_n$, $e_{-n}\in K_{-n}$
	for $n=1,\dots, N$ such that
$$
\langle\Omega, \sum_{n=1}^N (W(e_n) JW(e_n)J+W(e_{-n})JW(e_{-n})J)\Omega\rangle\geq \frac{a^{1/2}(1+(a/b)^{1/2})N}{a+1}=: r(N),
$$
while
$$
\|\sum_{n=1}^N( W(e_n) \ot JW(e_n)J+ W(e_{-n})\ot JW(e_{-n})J)\|\leq \frac{4}{1-|q|}(\frac{b+1}{a+1} N)^{1/2}=: s(N).
$$
It follows that we may find $N\in \N$ such that $|q| r(N)>(1+\delta)s(N)$ for some $\delta>0$
	and hence the desired conclusion holds by taking $\{x_i\}=\{W(e_n), W(e_{-n})\}_{n=1}^N$
		and $\xi$ to be any unit vector in $K_m$ with $m>N$.
\end{proof}

We now give sufficient conditions for $q$-Araki-Woods factors to be non-biexact.
Here we denote by $\sH_\R^{ap}$ and $\sH_\R^{wm}$ the almost periodic and weakly mixing part of a representation $U: \R\to \cO(\sH_\R)$,
	respectively. 
Note that whenever an almost periodic representation of $\R$ is infinite dimensional and has bounded spectrum, 
	the following two conditions are satisfied.

\begin{thm}\label{thm: non biexact}
Let $q\in (-1,1)\setminus\{0\}$ and $U:\R\to \cO(\sH_\R)$ an infinite dimensional representation.
Set $M=\Gamma_q(H_\R, U)''$ with $\chi$ its $q$-quasi free state and $A$ the generator of $U$.

If there exists $T>0$ such that $2q^2 \dim( E_A([1,T]) \sH_\C^{ap})>T$ or $\sH_\R^{wm}$ is nontrivial, then $M$ is not biexact.

Moreover, if there exists $T>0$ such that $q^8 \dim(E_A([1,T]) \sH_\C^{ap})>T^4$ and $\dim(\sH_\R^{ap})=\infty$, then $M^\chi$ is not biexact.
\end{thm}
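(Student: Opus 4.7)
The plan is to refute biexactness of $M$ (resp.\ $M^\chi$) via Lemma~\ref{lem: biexact prelim}(\ref{item: biexact bimodule character}) by producing a unit vector $\eta\in L^2(M^\cU\ominus M,\chi^\cU)$ and an element $z=\sum_{i=1}^d x_i\otimes(x_i^*)^{\op}\in M\otimes_{\alg}M^{\op}$ (resp.\ in $M^\chi\otimes_{\alg}(M^\chi)^{\op}$) with $|\langle\eta,\sum_i x_iJx_iJ\,\eta\rangle|>\|z\|_{\mini}$, which contradicts weak coarseness of this bimodule. The pair $(z,\delta)$ will come from Lemma~\ref{lem: non-AO lower bound}, while $\eta$ will be the Ocneanu ultrapower of a uniformly bounded sequence in $M$ (resp.\ $M^\chi$) that decays weakly to zero and witnesses the inequality uniformly.

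First I would verify that each hypothesis triggers the appropriate part of Lemma~\ref{lem: non-AO lower bound}: the hypothesis $2q^2\dim E_A([1,T])\sH_\C^{ap}>T$ selects a finite dimensional almost periodic subrepresentation fitting part~(\ref{item: ap 1}); a nontrivial weakly mixing part triggers part~(\ref{item: wm}); the stronger $q^8$--hypothesis triggers part~(\ref{item: ap 2}), whose witnesses lie in the centralizer. Inspecting each proof, the witnessing vector depends on a free parameter $m$ --- the index of an eigenvector $e_1^m\otimes e_2^m$ in the (almost) periodic cases, or of a spectral subspace $K_m$ of $A$ in the weakly mixing case --- that need only exceed some threshold. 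The infinite dimensionality of $\sH_\R^{ap}$ or the nontrivial weakly mixing part ensures infinitely many admissible values of $m$, yielding a sequence of mutually orthogonal unit vectors $\xi_m\in L^2M$ tending weakly to $0$ and each satisfying
$$\bigl|\langle\xi_m,\textstyle\sum_{i=1}^d x_iJx_iJ\,\xi_m\rangle\bigr|>(1+\delta)\|z\|_{\mini}.$$

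Writing $\xi_m=a_m\Omega$ with $a_m:=W(\eta_m)$ a Wick operator built from a simple tensor $\eta_m$ whose legs lie in a bounded spectral subspace of $A$, the Wick norm estimates from \cite{Nou04, Hia03} yield a uniform bound $\|a_m\|\leq C$, and mutual orthogonality forces $a_m\to 0$ $\sigma$--weakly. Hence $(a_m)_m\in\cM^\cU(M)$ defines $a:=(a_m)^\cU\in M^\cU$ with $E_M(a)=0$, so $\eta:=a(\chi^\cU)^{1/2}\in L^2(M^\cU\ominus M,\chi^\cU)$ is a unit vector. A routine ultrapower computation gives $\langle\eta,\sum_i x_iJx_iJ\,\eta\rangle=\lim_{m\to\cU}\langle\xi_m,\sum_i x_iJx_iJ\,\xi_m\rangle$, which by the uniform lower bound exceeds $\|z\|_{\mini}$, giving the contradiction. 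For the moreover statement, the tensors $\eta_m$ in part~(\ref{item: ap 2}) are $\cF_q(U_t)$--invariant so $a_m\in M^\chi$, and the same argument applied in the tracial algebra $(M^\chi,\trace)$ refutes biexactness of $M^\chi$. The main technical hurdle is the uniform Wick norm bound $\|W(\eta_m)\|\leq C$: this is nontrivial in the $q$-Araki-Woods setting because the non--isometric conjugation $S$ enters the Wick formula, but the fact that $\eta_m$ is built from vectors with bounded $A$--spectrum makes $S$ act boundedly on the relevant subspace, so the estimates from the cited references apply.
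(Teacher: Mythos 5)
Your proposal follows the same strategy as the paper: use Lemma~\ref{lem: biexact prelim}(\ref{item: biexact bimodule character}) to reduce to weak coarseness of $L^2(M^\cU\ominus M,\chi^\cU)$, then violate min-continuity of a coefficient functional via Lemma~\ref{lem: non-AO lower bound} applied along a sequence of mutually orthogonal witnesses $\xi_m$, and finally package these into an element of $M^\cU\ominus M$.

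There is, however, a genuine gap at the step ``the Wick norm estimates yield a uniform bound $\|a_m\|\leq C$, and mutual orthogonality forces $a_m\to 0$ $\sigma$-weakly. \emph{Hence} $(a_m)_m\in\cM^\cU(M)$.'' For a type ${\rm III}$ von Neumann algebra, a uniformly bounded, weakly null sequence need \emph{not} belong to the Ocneanu multiplier algebra $\cM^\cU(M)$; boundedness only gives $(a_m)_m\in\ell^\infty(\N,M)$, and one has to verify the two-sided multiplier condition on $\cI_\cU(M)$, which is not automatic from weak convergence to zero (the left multiplication condition is easy, but $x_m^*a_m^*\to 0$ $*$-strongly fails for general bounded $a_m$). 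What is actually needed is a uniform $\sigma^\chi$-analyticity bound. The paper handles this point carefully and differently in the two regimes: in the weakly mixing case it observes that the $e_m$ have $A$-spectrum inside a fixed bounded interval, so $\sigma^\chi_{-i/2}(a_m)=W(A^{-1/2}e_m)$ is uniformly bounded, which is the standard sufficient condition for $\cM^\cU$-membership; in the almost periodic case it chooses the $\xi_m$ to be $\cF_q(U_t)$-invariant so that $a_m\in M^\chi$, hence trivially analytic and $(a_m)^\cU\in(M^\chi)^\cU\subset M^\cU$. You do invoke ``bounded $A$-spectrum'' at the end, but only to justify the uniform \emph{operator-norm} bound $\|a_m\|\leq C$, not the analyticity bound required for $\cM^\cU(M)$; and your explicit justification of $\cM^\cU$-membership relies on a false inference. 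This needs to be repaired by explicitly invoking the uniform bound on $\sigma^\chi_{-i/2}(a_m)$ (weakly mixing case) or the fact that $a_m\in M^\chi$ (almost periodic case). Everything else in your proposal is sound and matches the paper's argument.
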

\begin{proof}
First we assume $\sH_\R^{wm}$ is nontrivial.
Observe that from  (\ref{item: wm}) of Lemma~\ref{lem: non-AO lower bound} one has $\{x_i\}_{i=1}^d\subset M$ and $\delta>0$ such that
$$
|\langle e_m, \sum_{i=1}^d x_i Jx_i J e_m\rangle|>(1+\delta)\|\sum_{i=1}^d x_i\ot Jx_i J\|,
$$
for any $m$ large enough and $e_m\in K_m$, where the notation $K_m$ is from Lemma~\ref{lem: non-AO lower bound}, (\ref{item: wm}).
It follows that there exists a sequence $a_m:=W(e_m)\in M$
	such that 
$$
M\ot M^{\op}\ni a\ot b^{\op}\mapsto \lim_{m\to \cU} \langle W(e_m)\Omega, a Jb^*J W(e_m)\Omega\rangle
$$
is not min-continuous for any $\cU\in \beta\N\setminus \N$.

Furthermore, note that for $\cU\in \beta\N\setminus \N$, the element $a:=(a_m)_m\in \ell^\infty(\N, M)$ lies inside $\cM^\cU(M)$
	as $\{e_m\}\subset E_A( (b^{-1}, b))H_\C$ and hence $\sigma_{-i/2}^\chi(a_m)=W(A^{-1/2}e_m)$ is uniformly bounded.
Moreover, since $\{e_m\}$ is an orthonormal set, one has $W(e_m)\to 0$ weakly and hence $a\in M^\cU$ with $E_M(a)=0$,
	where $E_M: M^\cU\to M$ is the normal expectation.
We thus conclude that the $M$-$M$ bimodule $L^2(M^\cU\ominus M, \chi^\cU)$ is not weakly contained in the coarse bimodule,
	which implies that $M$ is not biexact.
	
We now consider the case that there exists $T>0$ such that $2q^2 \dim( E_A([1,T]) \sH_\C^{ap})>T$.
Note that from the proof of Lemma~\ref{lem: non-AO lower bound}, (\ref{item: ap 1}), we have $\{x_i\}_{i=1}^d\subset M$ and $\delta>0$ such that
	for any unit vector $\xi \in (\sH_\C\ominus \sK_\C)^{\ot 2}$, where $\sK_\R \subset \sH_\R^{ap}$ is a certain finite dimensional subrepresentation,
	one has
$$|\langle \xi, \sum_{i=1}^d x_i Jx_i J \xi \rangle|>(1+\delta)\|\sum_{i=1}^d x_i\ot Jx_i J\|.$$
Since $\sH_\R$ is infinite dimensional, we may assume $\dim(\sH_\R^{ap})=\infty$.
One may then take a sequence $\xi_m\in \sH_\C^{\ot 2}$ to be invariant under $\{\cF_q(U_t)\}_{t\in\R}$
	(as in the proof of Lemma~\ref{lem: non-AO lower bound}, (\ref{item: ap 1})) that goes to $0$ weakly.
Then it follows that $a:=(W(\xi_m))^\cU\in (M^\chi)^\cU\subset M^\cU$
	and again we have $L^2(M^\cU\ominus M)\not\prec L^2M\ot L^2M$.
	
Finally, the moreover part follows the exact same argument, by noticing that Lemma~\ref{lem: non-AO lower bound}, (\ref{item: ap 2})
	provides us elements in $M^\chi$ 
	and thus we have $L^2(N^\cU\ominus N)\not\prec L^2N\ot L^2N$,
	where $N=M^\chi$.
\end{proof}

As a consequence, combing this theorem with Lemma~\ref{lem: biexact prelim}, we partly resolve \cite[Conjecture 2.11]{KSW23}.

\begin{cor}
Let $U:\R\to \cO(\sH_\R)$ be an infinite dimensional representation such that its weakly mixing part is nontrivial
	or its almost periodic part is infinite dimensional and has bounded spectrum
	and $q\in (-1,1)\setminus \{0\}$.	
\begin{enumerate}
	\item The $q$-Araki-Woods factor $\Gamma_q(\sH_\R, U)''$ is not isomorphic to any free Araki-Woods factors.
	\item For any $q'\in (-1,1)$ and finite dimensional representation $V: \R\to \cO(\sK_\R)$,
		one has $\Gamma_q(\sH_\R, U)''$ is not isomorphic to $\Gamma_{q'}(\sK_\R, V)''$.
\end{enumerate}
\end{cor}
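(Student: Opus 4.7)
The plan is to combine Theorem~\ref{thm: non biexact} with Lemma~\ref{lem: biexact prelim}. First I would verify that the hypotheses of Theorem~\ref{thm: non biexact} hold under either alternative on $U$. If $\sH_\R^{wm}$ is nontrivial, the weakly mixing alternative of Theorem~\ref{thm: non biexact} applies directly. If instead $\sH_\R^{ap}$ is infinite dimensional with bounded spectrum, then the spectrum of the generator $A$ restricted to $\sH_\C^{ap}$ lies in some interval $[T^{-1}, T]$ with $T > 1$; since $U$ comes from a real orthogonal representation, this spectrum is symmetric under $\lambda \mapsto \lambda^{-1}$, so $E_A([1, T])\sH_\C^{ap}$ remains infinite dimensional, and hence $2q^2 \dim(E_A([1,T])\sH_\C^{ap}) = \infty > T$ as required.

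In either case, Theorem~\ref{thm: non biexact} yields that the $M$-$M$ bimodule $L^2(M^\cU \ominus M, \chi^\cU)$ is not weakly coarse for some non-principal ultrafilter $\cU \in \beta\N\setminus\N$. By the contrapositive of Lemma~\ref{lem: biexact prelim}(1), $M = \Gamma_q(\sH_\R, U)''$ is not biexact.

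Since biexactness is an isomorphism invariant of von Neumann algebras, part (1) follows from Lemma~\ref{lem: biexact prelim}(2), which asserts that every free Araki-Woods factor is biexact. Part (2) follows from Lemma~\ref{lem: biexact prelim}(3): for any $q' \in (-1, 1)$ and finite dimensional orthogonal representation $V: \R \to \cO(\sK_\R)$, the nuclearity of ${\rm C}^*(\ell(e_1), \dots, \ell(e_n))$ (verified across $|q'| < 1$ in the proof of that lemma) ensures that $\Gamma_{q'}(\sK_\R, V)''$ is biexact, which would again contradict non-biexactness of $M$.

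I do not expect any substantive obstacle in this corollary, as the heavy lifting is contained in Theorem~\ref{thm: non biexact} and the preparatory Lemma~\ref{lem: biexact prelim}. The only minor step is the spectral-symmetry observation in the almost periodic case, which amounts to the fact that a self-adjoint operator whose spectrum is symmetric under $\lambda \leftrightarrow \lambda^{-1}$ and supports an infinite-dimensional spectral subspace must do so already on each of the two halves $[1, T]$ and $[T^{-1}, 1]$.
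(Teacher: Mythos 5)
Your proof is correct and takes the same route the paper intends: the corollary is stated as an immediate consequence of combining Theorem~\ref{thm: non biexact} with Lemma~\ref{lem: biexact prelim}, which is precisely what you do. The one step the paper leaves implicit — that an infinite-dimensional almost periodic part with bounded spectrum forces $E_A([1,T])\sH_\C^{ap}$ to be infinite dimensional, via the $\lambda \leftrightarrow \lambda^{-1}$ symmetry of the spectrum coming from the reality of $U$ — is exactly what your spectral-symmetry observation supplies, and it is accurate.
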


In particular, almost periodic $q$-Araki-Woods factors can not be classified by Connes' Sd invariant 
	for $q\in (-1,1)\setminus\{0\}$, contrasting to the free Araki-Woods case \cite{Sh97}.


\subsection{Non-isomorphic inclusions}
As we have seen from the previous section that the class of almost periodic $q$-Araki-Woods factors is different from
	the class of almost periodic free Araki-Woods factors,
	by considering infinite dimensional representations of $\R$.
In this section, we observes that these two classes are still different in a certain sense even if we only consider 
	free and $q$-Araki-Woods factors associated with finite dimensional representations.
		
In fact, it follows easily from \cite{Sh97} that for a given ergodic finite dimensional representation $U: \R\to \cO(\sH_\R)$
	and a subrepresentation $\sK_\R\subset \sH_\R$,
	the natural inclusion $N:=\Gamma(\sK_\R, U)''\subset M:=\Gamma(\sH_\R, U)''$ is completely classified by
	the (non-closed) subgroups generated by the spectrum of the generator of $\{{U_t}_{\mid K_\R}\}_{t\in \R}$ and $\{U_t\}_{t\in \R}$, respectively.
In contrast, we have the following theorem for $q$-Araki-Woods factors.
In the following, by inclusions of $q$-Araki-Woods factors we always mean the inclusions of factors
	arising from inclusions of representations.

\begin{thm}\label{thm: non-iso inclusions}
Let $U:\R\to \cO(\sH_\R)$ be a finite dimensional representation and $q\in (-1,1)\setminus \{0\}$.
Then there exist a finite dimensional representation $V: \R\to \cO(\sK_\R)$ containing $U: \R\to \cO(\sH_\R)$
	and a subrepresentation $\sL_\R\subset \sK_\R$ such that the inclusion
	$(\Gamma_q(\sH_\R, U)''\subset \Gamma_q(\sK_\R, V)'')$ is not isomorphic to 
	$(\Gamma_q(\sL_\R, V_{\mid L_\R})''\subset \Gamma_q(\sK_\R, V)'')$.
\end{thm}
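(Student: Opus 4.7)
The plan is to exhibit a bimodule-theoretic invariant that takes different values on the two inclusions. For an inclusion $P\subset Q$ of finite-dimensional $q$-Araki-Woods factors arising from an inclusion of subrepresentations, define
\[
\kappa(P\subset Q):=\inf\{k\geq 1 : L^2(Q\ominus P)^{\ot_P^k}\text{ is weakly coarse as a $P$-$P$ bimodule}\},
\]
which is finite by Lemma~\ref{lem: wk coarse lower bound w/o R action} and is manifestly preserved by inclusion isomorphisms. The strategy is to choose $V$ and $\sL_\R$ so that $\kappa(N\subset M)<\kappa(N'\subset M)$, where $N:=\Gamma_q(\sH_\R,U)''$, $N':=\Gamma_q(\sL_\R,V|_{\sL_\R})''$, and $M:=\Gamma_q(\sK_\R,V)''$.

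First I set up the representations. Let $k_1$ be the smallest integer exceeding $-\log(\dim \sH_\R)/\log|q|$, so Lemma~\ref{lem: wk coarse lower bound w/o R action} forces $\kappa(N\subset M)\leq k_1$ for any enlarging $V\supset U$. Fix rotations $W,W':\R\to\cO(\R^2)$ with distinct eigenvalue pairs $\lambda_0^{\pm 1}$ and $\mu^{\pm 1}$, $\lambda_0>1$ close to $1$, and pick $D\in\N$ large, to be determined. Take
\[
\sL_\R:=(\R^2,W)^{\oplus D},\qquad \sK_\R:=\sH_\R\oplus \sL_\R\oplus (\R^2,W')^{\oplus k_1},
\]
and let $V$ be the resulting representation on $\sK_\R$.

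The core of the argument is the lower bound $\kappa(N'\subset M)>k_1$. By the Fock-space embedding from the proof of Lemma~\ref{lem: wk coarse lower bound w/o R action},
\[
L^2(M\ominus N')^{\ot_{N'}^{k_1}}\hookrightarrow \bigoplus_{n\geq k_1}\cL_n^{\sL\subset \tilde\sL},\qquad \tilde\sL=\sK\oplus (\sK\ominus \sL)^{\oplus(k_1-1)},
\]
I take $\xi_0$ to be the image of $\eta_1\ot_{N'}\cdots \ot_{N'}\eta_{k_1}$, with each $\eta_j\in L^2(M\ominus N')$ a single-particle eigenvector drawn from a different $(\R^2,W')$-summand among the $k_1$ copies of $\sK\ominus \sL$ appearing in $\tilde\sL$; this yields a Fock vector with exactly $k_1$ legs, one per copy. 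The test operators are $\{W(\xi):\xi\in E_{k',D}\}\subset N'$, the Wick operators attached to unit eigenvectors of $\cF_q(A)$ in $\sL^{\otimes k'}$ built from the $D$ rotation summands of $\sL_\R$, as in Lemma~\ref{lem: non-AO lower bound}(\ref{item: ap 1}). Iterating the Wick-formula computation of Lemma~\ref{lem: non-AO lower bound}(\ref{item: ap 1}), the unique matching permutation now transports $k'$ $\xi$-tensors past the $k_1$-legged target, contributing $q^{k_1 k'}$, whence
\[
\sum_{\xi\in E_{k',D}}\langle \xi_0,W(\xi)JW(\xi)J\xi_0\rangle_q\geq q^{k_1 k'}\bigl(D(\lambda_0^{1/2}+\lambda_0^{-1/2})\bigr)^{k'}\geq (2Dq^{k_1})^{k'},
\]
while Nou's estimate bounds $\bigl\|\sum_{\xi\in E_{k',D}}W(\xi)\ot JW(\xi)J\bigr\|_{\min}\leq C(k'+1)^2(2D)^{k'/2}\lambda_0^{k'}$. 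For $D>\lambda_0^2/(2q^{2k_1})$, the ratio grows exponentially in $k'$, so $L^2(M\ominus N')^{\ot_{N'}^{k_1}}$ fails to be weakly coarse, giving $\kappa(N'\subset M)>k_1\geq\kappa(N\subset M)$. Hence the two inclusions are non-isomorphic.

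The principal technical obstacle is the Wick computation in the embedded Fock space. One must verify that the unique matching permutation accrues exactly $k_1 k'$ inversions, control contributions from $\xi$'s with repeated tensor factors (which could a priori interfere with the main sum), and confirm that the chosen $\xi_0$ truly lies in the image of $L^2(M\ominus N')^{\ot_{N'}^{k_1}}$ rather than merely inside $\bigoplus_{n\geq k_1}\cL_n^{\sL\subset \tilde\sL}$. Iterating the combinatorics of Lemma~\ref{lem: inequality for wk coarse} and Lemma~\ref{lem: embedding bimodules} to $k_1$ bimodule tensor factors, while tracking the tower of enlargements defining $\tilde\sL$, is the bulk of the labor.
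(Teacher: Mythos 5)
Your strategy is essentially the same as the paper's: use the ``tensor-power weak-coarseness threshold'' to distinguish the two inclusions, with Lemma~\ref{lem: wk coarse lower bound w/o R action} supplying the upper bound and a variant of the computation in Lemma~\ref{lem: non-AO lower bound} supplying the lower bound. The choice of representations and the Wick-formula estimates match the paper's in all essential respects.

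The genuine gap is the claim that $\kappa(P\subset Q)$ ``is manifestly preserved by inclusion isomorphisms.'' This is not manifest, and it is precisely what the paper's Lemma~\ref{lem: almost periodic state} and the accompanying corner argument are designed to handle. Here is the problem: an inclusion isomorphism $\alpha:(N\subset M)\to(B\subset M)$ transports the $\chi$-bimodule $L^2(M\ominus N,\chi)^{\ot_N^k}$ to $L^2(M\ominus B,\varphi)^{\ot_B^k}$, where $\varphi=\chi\circ\alpha^{-1}$ \emph{is a different state}, giving a possibly different normal conditional expectation $M\to B$ and hence a possibly different orthogonal complement $L^2(M\ominus B)$ inside $L^2(M)$. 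Your definition of $\kappa$ does not specify a state, so it is either ambiguous or presupposes a state-independence that needs proof. The paper resolves this by invoking Connes' classification of almost periodic weights on full type ${\rm III}$ factors \cite{Con74} (via Lemma~\ref{lem: almost periodic state}): since $B$ is a full type ${\rm III}$ factor, $\varphi$ and $\chi$ are almost periodic on $B$, and $B^\varphi$, $B^\chi$ are factors, one finds projections $p\in B^\varphi$, $q\in B^\chi$ and a state-preserving isomorphism of the cut inclusions $(pBp\subset pMp,\varphi_p)\cong(qBq\subset qMq,\chi_q)$; then one checks weak coarseness transfers under cutting and amplification. Without this step (or without proving the expectation $M\to B$ is unique, which would require $B'\cap M=\C$ and is not established), the argument does not close. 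The remaining technical points you flag — that $\xi_0$ really lies in the image of the relative tensor power, control of repeated tensor factors, the inversion count — are real but are handled by the same mechanics as in the paper's Lemma~\ref{lem: wk coarse lower bound w/o R action} and Lemma~\ref{lem: non-AO lower bound}, so those are not fundamental obstacles.
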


In particular, if we fix $U:\R\to \cO(\R^2)$ a representation
	and denote by $M_n^q=\Gamma_q(\oplus_{i=1}^n (\R^2, U))''$ for  $n\in \N$,
	then the preceding theorem shows that for any $d\in \N$, there exist $m,n\in \N$
	with $d,m<n$ such that $M_d^q\subset M_n^q$ is not isomorphic to $M_m^q\subset M_n^q$
	if $q\in (-1,1)\setminus\{0\}$.
However, if $U$ is ergodic, one always has $M_d^0\subset M_n^0$ and $M_m^0\subset M_n^0$ are isomorphic for any $d,m<n$ by \cite{Sh97}.

We need the following consequence of \cite{Con74}.
Its proof is similar to the argument in \cite[Theorem F]{HSV19}.

\begin{lem}\label{lem: almost periodic state}
Let $M$ be a factor with faithful normal states $\varphi$, $\psi$ and $N\subset M$ a subfactor with $E_N^\varphi: M\to N$
	(resp.\ $E_N^\psi: M\to N$) $\varphi$-preserving (resp.\ $\psi$-preserving) expectation.	
	
Suppose that $N$ is a full type ${\rm III}$ factor with $\varphi$, $\psi$ almost periodic on $N$
	such that $N^\varphi$ and $N^\psi$ are factors.
Then there exist nonzero projections $p\in N^\varphi$ and $q\in N^\psi$
	such that $(pNp\subset pMp, \varphi_p)$ and $(qNq\subset qMq, \psi_q)$ are isomorphic,
	where $\varphi_p=\varphi(p\cdot p)/\varphi(p)$ and $\psi_q$ defined similarly. 
\end{lem}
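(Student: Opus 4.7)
The plan is to follow the strategy of \cite[Theorem F]{HSV19}, whose key input is Connes' rigidity theorem \cite{Con74} for almost periodic states on full factors with factor centralizers.

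First I would apply Connes' theorem to $\varphi_{\mid N}$ and $\psi_{\mid N}$, which are faithful normal almost periodic states on the full type ${\rm III}$ factor $N$ with factor centralizers; the $\varphi$- and $\psi$-preservation of $E_N^\varphi, E_N^\psi$ guarantees that $\sigma^\varphi, \sigma^\psi$ leave $N$ globally invariant, with $\sigma_t^\varphi|_N = \sigma_t^{\varphi_{\mid N}}$ and similarly for $\psi$. The rigidity theorem then yields a decomposition
\[
[D\psi_{\mid N}:D\varphi_{\mid N}]_t \;=\; v\,\sigma_t^{\varphi_{\mid N}}(v^*)\,h^{it}, \qquad t\in\R,
\]
for a unitary $v\in\cU(N)$ and a positive self-adjoint operator $h$ affiliated with $N^{\varphi_{\mid N}}$ with pure point spectrum. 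As $v\in N$, the inner automorphism $\Ad(v)$ of $M$ preserves the inclusion $N\subset M$, and replacing $\psi$ by $\psi\circ\Ad(v)$ provides an isomorphic inclusion with state; so I may assume $v=1$, equivalently $\psi_{\mid N}=\varphi_{\mid N}(h\,\cdot)$.

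Second, since $N^{\varphi_{\mid N}}$ is a factor and $h$ is affiliated with it with pure point spectrum, I would pick a nonzero spectral projection $p\in N^{\varphi_{\mid N}}$ with $hp=\lambda p$ for some $\lambda>0$ and set $q:=p$. The identity $\sigma^\varphi|_N=\sigma^{\varphi_{\mid N}}$ gives $p\in N^\varphi\subset M^\varphi$, and the cocycle relation together with $hp=ph$ similarly yields $p\in M^\psi$. On the corner $pNp$ the Connes cocycle collapses to the scalar $\lambda^{it}p$, so $\psi_p$ and $\varphi_p$ agree on $pNp$ after normalization.

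The main obstacle is to promote the agreement $\psi_p|_{pNp}=\varphi_p|_{pNp}$ from $pNp$ to all of $pMp$, since the $\varphi$- and $\psi$-preserving expectations onto $N$ may a priori differ on $M$. My plan is to analyze the Connes cocycle $[D\psi_p:D\varphi_p]_t\in pMp$, which takes values in the $\varphi_p$-centralizer and restricts to the scalar $\lambda^{it}p$ on $pNp$; using the full-factor and factor-centralizer hypotheses on $N$ — precisely as in the analogous step of \cite[Theorem F]{HSV19} — I expect to show the cocycle is a scalar on the whole corner, forcing $\psi_p=\varphi_p$ and yielding the identity map as the required state-preserving isomorphism of inclusions. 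This promotion from $N$ to $M$ is the technical heart of the argument.
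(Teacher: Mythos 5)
Your high-level plan — invoke Connes' rigidity \cite{Con74} as in \cite[Theorem F]{HSV19}, pass to a corner where the discrepancy between $\varphi|_N$ and $\psi|_N$ trivializes, then extend to the ambient corner — is the right family of ideas, but you have correctly diagnosed where your route jams, and the obstruction you name at the end is genuine, not merely technical. Knowing $\varphi_p|_{pNp}=\psi_p|_{pNp}$ does not determine anything on $pMp$: each corner state is recovered from its restriction to $pNp$ only through the corresponding conditional expectation, and the two expectations $E_N^\varphi, E_N^\psi:M\to N$ are \emph{not} assumed equal (nor are they equal in the application, where $\psi$ is transported by an a priori unrelated isomorphism). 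Consequently $[D\psi_p:D\varphi_p]_t$ is an honest $pMp$-valued cocycle with no reason to take values in $pNp$, so trivializing the $N$-cocycle on the corner gives you no handle on it. Insisting on $q=p$ is precisely what places you in this bind; a smaller version of the same issue already appears in your assertion $p\in M^\psi$, which silently uses that $[D\psi:D\varphi]_t$ restricts to $[D\psi|_N:D\varphi|_N]_t$, i.e.\ compatibility of the two expectations.

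The paper's argument changes what gets identified and thereby sidesteps the extension problem. It amplifies to $\cM=\B(\ell^2\N)\ovt M\supset \cN=\B(\ell^2\N)\ovt N$ with $\tilde\varphi=\Tr\ot\varphi$, $\tilde\psi=\Tr\ot\psi$, so that $\tilde\varphi|_\cN,\tilde\psi|_\cN$ become ${\rm Sd}(\cN)$-almost periodic weights \cite[Lemma 4.8]{Con74}, and then applies Connes' uniqueness theorem \cite[Theorem 4.7]{Con74} to obtain a single unitary $u\in\cU(\cN)$ and a scalar $\alpha>0$ with $\tilde\varphi=\alpha\,\tilde\psi(u^*\cdot u)$ on $\cN$ — genuine unitary conjugacy of weights, with no residual Radon--Nikodym density $h$ to worry about. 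One then takes $p\in N^\varphi$ arbitrary, notes $u^*(e\ot p)u\in\cN^{\tilde\psi}=\B(\ell^2\N)\ovt N^\psi$, and uses factoriality of $N^\psi$ to find $v\in\cU(\B(\ell^2\N)\ovt N^\psi)$ carrying this projection to $e\ot q$. The isomorphism of inclusions is $\theta=\Ad(uv)$ restricted to corners; crucially $uv\in\cN$, so $\theta$ maps $pNp$ onto $qNq$ and intertwines the two corner expectations, and it is exactly this intertwining (rather than any equality of the expectations themselves) that propagates the state identity from $pNp$ to all of $pMp$. In short: you should not aim for $q=p$ and $\theta=\mathrm{id}$; the isomorphism must be a nontrivial inner automorphism of the amplification, with the inclusion-preservation built in because the implementing unitary lives in $\cN$.
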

\begin{proof}
Set $\cM=\B(\ell^2\N)\ovt M$, $\cN=\B(\ell^2\N)\ovt N$, $\tilde\varphi=\Tr\ot \varphi$ and $\tilde\psi=\Tr\ot \psi$,
	where $\Tr$ denotes the canonical trace on $\B(\ell^2\N)$.
	
By assumption, the weights $\tilde\varphi_{\mid \cN}$ and $\tilde \psi_{\mid \cN}$ are ${\rm Sd}(\cN)$-almost periodic weight \cite[Lemma 4.8]{Con74}
	and hence there exist $u\in \cU(\cN)$ and $\alpha>0$ such that $\tilde\varphi(x)=\alpha\tilde \psi(u^* xu)$ for all $x\in \cN$
	by \cite[Theorem 4.7]{Con74}.
It follows that for any nonzero projection $p\in N^\varphi$, one has $u^*(e\ot p) u\in \cN^{\tilde \psi}=\B(\ell^2\N)\ovt N^\psi$,
	where $e\in \B(\ell^2\N)$ denotes a rank-one projection.
By factoriality of $N^\psi$, we may find a unitary $v\in \B(\ell^2\N)\ovt N^\psi$
	such that $(uv)^*(e\ot p) uv=e\ot q$,
	where $q\in N^\psi$ is a projection with $\alpha\psi(q)=\varphi(p)$.

Consider the isomorphism $\theta: pMp=(e\ot p)\cM (e\ot p)\ni (e\ot p)x(e\ot p)\mapsto \Ad(uv)((e\ot p)x (e\ot p))\in (e\ot q)\cM (e\ot q)=qMq$
	and note
	$\varphi(x)= \alpha \tilde \psi( (e\ot q)\theta(x)(e\ot q))=\alpha\psi(q \theta(x)q)$ for any $x\in pNp$
	and it follows that $\theta: (pNp, \varphi_p)\mapsto (qNq, \psi_q)$ is a state preserving isomorphism.

Furthermore, notice that $\theta \circ E_{pNp}^{\varphi_p} = E_{pNp}^{\psi_p}\circ \theta$ as $u,v\in \cN$,
	which implies that $\varphi_p(x)=\varphi_p(E_{pNp}^{\varphi_p}(x))=\psi_q(\theta\circ E_{pNp}^{\varphi_p}(x))=\psi_q(\theta(x))$ 
	for all $x\in pMp$. 
\end{proof}

\begin{proof}[Proof of Theorem~\ref{thm: non-iso inclusions}]
Set $d=\dim(\sH_\R)$ and $(\sK_\R, V):=(\sH_\R, U)\oplus(\sK_\R', V')$, where the representation $(\sK_\R', V')$ will be determine later.
Put $N:=\Gamma_q(\sH_\R, U)''\subset M:=\Gamma_q(\sK_\R, V)''$ and denote by $\chi$ the $q$-quasi free state.
By Lemma~\ref{lem: wk coarse lower bound w/o R action}, 
	we have the $N$-$N$ bimodule $L^2(M\ominus N)^{\ot_N^k}$ is weakly coarse if $k>\frac{-\log(d)}{\log(|q|)}$.

We claim that it suffices to find a subrepresentation $\sL_\R\subset \sK_\R$ such that 
	$L^2(M\ominus B,\chi)^{\ot_{B}^k}$ is not weakly contained in $L^2B\ot L^2B$,
	where $B=\Gamma_q(\sL_\R, V)''$.
	
Indeed, if there were an isomorphism $\alpha$ between the inclusions $N\subset M$ and $B\subset M$,
	then putting $\varphi=\chi\circ\alpha^{-1}: M\to \C$,
	we have a state preserving isomorphism $\alpha: (M,\chi)\to (M,\varphi)$ that 
	restricts to an isomorphism $\alpha: (N,\chi)\to (B,\varphi)$,
	and $B\subset M$ admits a $\varphi$-preserving expectation.
It then follows that ${_{\alpha(N)}}{L^2(M\ominus B, \varphi)^{\ot_B^k}}{_{\alpha(N)}}$
	coincides with ${_N}{L^2(M\ominus N,\chi)^{\ot_N^k}}{_N}$
	and hence is weakly coarse.

Note that $B$ is a full	type ${\rm III}$ factor and $B^\chi$ is a factor \cite{KSW23}.
Applying Lemma~\ref{lem: almost periodic state} one sees that $(pBp\subset pMp, \varphi_p)$ is isomorphic to $(qBq\subset qMq,\chi_q)$
	for some nonzero projection $p\in B^\varphi$, $q\in B^\chi$,
	which in turn shows that 
	${_{pBp}}{L^2(pMp\ominus pBp,\varphi_p)^{\ot_{pBp}^k}}{_{pBp}}$
	and ${_{qBq}}{L^2(qMq\ominus qBq,\chi_q)}^{\ot_{qBq}^k}{_{qBq}}$
	are isomorphic bimodules.
Note that  ${_{pBp}}{L^2(pMp\ominus pBp,\varphi_p)^{\ot_{pBp}^k}}{_{pBp}}$ is weakly coarse 
	as ${L^2(M\ominus B, \varphi)^{\ot_B^k}}$ is,
	and thus so is $L^2(qMq\ominus qBq,\chi_q)^{\ot_{qBq}^k}$ is weakly coarse as a $qBq$-$qBq$ bimodule,
	which further implies that $L^2(M\ominus B,\chi)^{\ot_{B}^k}$ is a weakly coarse $B$-$B$ bimodule.
Indeed, setting $P=qBq$ and $\cH=L^2(qMq\ominus qBq,\chi_q)$,
	one has 
	$${_B}{L^2(M\ominus B,\chi)}{_B}={_{\M_n(P)}}{\cH\ot \C^{n^2}}{_{\M_n(P)}},$$
	as $q$ is a nonzero projection in a ${\rm II}_1$ factor $B^\chi$ and by replacing $q$ with a sub-projection we may assume
	that $\chi(q)=1/n$ for some $n\in \N$.
Thus the weak coarseness of $L^2(M\ominus B,\chi)^{\ot_{B}^k}$ follows from the weak coarseness of ${_P}{\cH}{_P}$ directly
	and our claim is justified.

To to find such a subrepresentation $\sL_\R\subset \sK_\R$, we follow the argument in Lemma~\ref{lem: non-AO lower bound}.
Note that from the proof of Lemma~\ref{lem: wk coarse lower bound w/o R action},
	the $B$-$B$ bimodule $L^2(M\ominus B)^{\ot_{B}^k}$ 
	contains vector of the form $\xi=\xi_1\ot \cdots \ot \xi_k \in (\oplus_{i=1}^{k} (\sK\ominus \sL))^{\ot k}$.
It follows that for any $\eta\in \sL_\C^{\ot n}$, one has
$$
\langle\xi, W(\eta)JW(\eta)J\xi\rangle=q^{kn} \|\xi\|^2\langle\Omega, W(\eta) JW(\eta)J\Omega\rangle.
$$
Thus the proof of Lemma~\ref{lem: non-AO lower bound} shows that if $\sL_\R$ satisfies that $2q^{2k} \dim(\sL_\R)/T^2>1$,
	where $T$ is the maximal eigenvalue of the generator of $V_{\mid \sL_\R}$,
	then $\overline{B\xi B}$ is not weakly coarse as a $B$-$B$ bimodule,
	which implies that $L^2(M\ominus B)^{\ot_B^k}$ is not weakly coarse.

Therefore, we may find such a finite dimensional representation $V': \R\to \cO(\sL_\R)$ as $k$ is only determined by $\dim(\sH_\R)$ and $q$.
The representation $V': \R\to \cO(\sK_\R')$ then can be chosen to be any finite dimensional representation containing $L_\R$ as a proper subrepresentation.
\end{proof}

\bibliographystyle{amsalpha}
\bibliography{ref}

\end{document}